\def\tank#1{\protected@xdef\@thanks{\@thanks
        \protect\footnotetext[0]{#1}}}
\def\bigfoot{

    \@footnotetext}
\newcommand{\DEQS}{\begin{eqnarray*}}
\newcommand{\EEQS}{\end{eqnarray*}}
\newcommand{\DEQZ}{\begin{eqnarray}}
\newcommand{\EEQZ}{\end{eqnarray}}
\theoremstyle{plain}
\newtheorem{thm}{Theorem}[section]
\newtheorem{lem}{Lemma}[section]
\newtheorem{prop}{Proposition}[section]
\newtheoremstyle{boldremark}
    {\dimexpr\topsep/2\relax} 
    {\dimexpr\topsep/2\relax} 
    {}          
    {}          
    {\bfseries} 
    {.}         
    {.5em}      
    {}          
\theoremstyle{boldremark}
\newtheorem{definition}{Definition}[section]
\newtheorem{example}{Example}[section]
\newtheorem{ass}{Assumption}[section]
\newtheorem{rmk}{Remark}[section]
\numberwithin{equation}{section}
\renewenvironment{proof}{{\bfseries Proof}}{\qed}
\def\RR{\mathbb{R}}
\def\PP{\mathbb{P}}
\def\EE{\mathbb{E}}
\def\NN{\mathbb{N}}
\def\cE{{\mathcal E}}
\def\de{{\delta}}
\def\si{{\sigma}}
\def\et{{\eta}}
\def\al{{\alpha}}
\def\Ga{{\Gamma}}
\def\ga{{\gamma}}
\def\de{{\delta}}
\def\si{{\sigma}}
\def\eps{{\vare}}
\def\vare{{\varepsilon}}
\def\EE{\mathbb{ E}}
\def\si{{\sigma}}
\def\al{{\alpha}}
\def\dt{\, dt}
\begin{document}

\baselineskip 15.35pt
\numberwithin{equation}{section}
\title
{{Uniform large deviations and metastability of random dynamical systems }
}

\date{}

\author{{Jifa Jiang}$^1$\footnote{E-mail:jiangjf@shnu.edu.cn}~~~{Jian Wang}$^2$\footnote{E-mail:wg1995@mail.ustc.edu.cn}~~~ {Jianliang Zhai}$^3$\footnote{E-mail:zhaijl@ustc.edu.cn}~~~ {Tusheng Zhang}$^{4}$\footnote{E-mail:Tusheng.Zhang@manchester.ac.uk}
\\
\small 1. Mathematics and Science College, Shanghai Normal University, Shanghai 200234, China \\
\small 2. School of Mathematics, Hangzhou Normal University, Hangzhou 311121,  China \\
 \small  3. School of Mathematical Sciences,
 \small  University of Science and Technology of China,\\
 \small  Hefei 230026, China.\\
  \small 4. Department of Mathematics, University of Manchester, Oxford Road, Manchester, M13 9PL, UK
}

\maketitle	

\newcommand\blfootnote[1]{%
\begingroup
\renewcommand\thefootnote{}\footnote{#1}%
\addtocounter{footnote}{-1}%
\endgroup
}

\begin{center}
\begin{minipage}{130mm}
{\bf Abstract:}
In this paper, we first provide a criterion on uniform large deviation principles (ULDP) of stochastic differential equations under Lyapunov conditions on the coefficients, which can be applied to stochastic systems with coefficients of polynomial growth and possible degenerate driving noises. In the second part, using the ULDP criterion we preclude the concentration of limiting measures of invariant measures of stochastic dynamical systems on repellers and acyclic saddle chains and extend Freidlin and Wentzell's asymptotics theorem to stochastic systems with unbounded coefficients. Of particular interest, we determine the limiting measures of the invariant measures of the famous stochastic van der Pol equation and  van der Pol Duffing equation whose noises are naturally degenerate. We also construct two examples to match the global phase portraits of Freidlin and Wentzell's unperturbed systems and to explicitly  compute their transition difficulty matrices. Other applications include stochastic May-Leonard system and random systems with infinitely many  equivalent classes.

\vspace{3mm} {\bf Keywords:}
Uniform large deviations; Lyapunov conditions; invariant measure; asymptotic measure; stochastic van der Pol equation

\vspace{3mm} {\bf AMS Subject Classification:}
60B10; 60F10; 60H10; 37A50, 37C70.

\end{minipage}
\end{center}

\newpage

\renewcommand\baselinestretch{1.2}
\setlength{\baselineskip}{0.28in}

\section{Introduction}
Let $b$ be a measurable vector field on $\RR^d$. Consider the dynamical system
\begin{equation}\label{0.1}
dX(t)=b(X(t))\dt,\quad t\ge 0, \quad X(0)=x \in \RR^d,
\end{equation}
and its random perturbation
\begin{equation}\label{11}
dX^\vare(t)=b(X^\vare(t))\dt+\sqrt{\vare}\sigma(X^\vare(t))dB(t),\quad t\ge 0, \quad X^\vare(0)=x \in \RR^d,
\end{equation}
where $\{B(t)\}_{t \geq 0}$ is an \emph{m}-dimensional Brownian motion on a complete probability space $(\Omega,\mathcal{F},\{{\mathcal{F}}_t\}_{t\geq 0},\mathbb{P})$   with a filtration $\{{\mathcal{F}}_t\}_{t\geq 0}$ satisfying the usual conditions, $\sigma(\cdot): \RR^d\rightarrow \RR^d\otimes \RR^m$ is a measurable mapping, and $\vare$ is a strictly positive constant.

The deterministic system (\ref{0.1}) usually has several invariant measures. It is well-known that the noise often stabilizes the deterministic system, which means that the perturbed system (\ref{11}) has a unique invariant measure, denoted by $\mu^{\varepsilon}$. In order to understand how the random perturbations  influence the  behavior of the deterministic dynamical system, one of the fundamental problems is to identify  which invariant measure of (\ref{0.1}) $\mu^{\varepsilon}$ converges to as $\varepsilon$ tends to 0, which is the famous asymptotic measure problem proposed by Kolmogorov, see \cite{Sinai 1989}.

Except for potential systems (e.g., \cite{Hwang1}) and monostable systems (e.g., \cite{Khasminskii, FW, XCJ}),  it is extremely
hard (if not impossible) to describe precisely the limit measure $\mu$ of the family $\mu^{\varepsilon}$. Instead, people try  to identify where the support of the limiting measure $\mu$ is. This is equally non-trivial.
 The existing results \cite{FW1, FW, Hwang1,XCJ} state that the limiting measures are concentrated on  stable limit sets of (\ref{0.1}) under the assumptions that coefficients are bounded, the driving noise is non-degenerate
 (see, e.g., (${ \bf A}_1$) or (${ \bf A}_2$), and (${ \bf A}_4$) below). However, many important models from physics are nonlinear systems with polynomial coefficients, e.g., the Lorentz system \cite{Lorenz}; the R\"{o}ssler system \cite{Rossler};  the Chua circuit \cite{Chua} and the FitzHugh-Nagumo system \cite{FN} et al, and also all randomly forced damping Hamiltonian systems originating from Langevin dynamics naturally have degenerate noise, which include the well-known stochastic  van der Pol equation and  van der Pol Duffing equation. The purpose of this paper is to provide a framework and general results on the support of limiting measures under mild conditions, which in particular can be applied to the above interesting models.

We now give some details about the existing results which are relevant to the current paper.
Freidlin and Wentzell \cite{FW1, FW} introduced the notion of a uniform large deviation principle (ULDP) and revealed  the close connections to the long-time influence of small random perturbations of deterministic dynamical systems, especially to the support of the limiting measures $\mu$. Let us recall the conditions introduced in \cite{FW}.\\
(${\bf A}_1$). The coefficients  $b$ and $\sigma$ are bounded, locally Lipschitz continuous and uniformly continuous on $\mathbb{R}^d$  and there exists a positive constant $\lambda>0$ such that
$$\beta^*\sigma(x)\sigma^{*}(x)\beta\ge \lambda |\beta|^2\ {\rm for\ all}\ \beta, x\in \mathbb{R}^d.$$
(${\bf A}_2$). $d=m$, $\sigma$ is an identity matrix and the drift $b$ is globally Lipschitz continuous.
\vskip 0.3cm
Under the assumption (${\bf A}_1$) or (${\bf A}_2$),  Freidlin and Wentzell proved that the family of the laws of the solution processes  $\{X^{\varepsilon,x}\}$ of stochastic differential equations (SDEs) (\ref{11})
satisfies a ULDP with respect to the initial value $x\in \mathbb{R}^d$; see \cite [Theorem 3.1, p.135] {FW} etc.. Since then, when people apply ULDP to study  the metastable behavior of stochastic dynamical systems, they usually impose the condition  (${\bf A}_1$) or (${\bf A}_2$) (see \cite{FW2,E, Vanden, FW3, FW4, Olivieri}).
\vskip 0.3cm
To describe the previous results on the support of the limiting measures of the family of the invariant measures of  SDEs (\ref{11}), we introduce two further conditions.\\
(${\bf A}_3$). There are $\vare_0, \gamma>0$ and a nonnegative $C^2$ Lyapunov function $V$ with $\lim_{|x|\rightarrow \infty}V(x)= \infty$  such that
$$ \frac{\vare}{2}{\rm Trace}\left(\sigma^{*}(x)\nabla^2V(x)\sigma(x)\right)+ \langle b(x),\nabla V(x)\rangle\le -\gamma\ {\rm for}\ \ |x|\gg1, \ 0<\vare \le \vare_0.$$
(${\bf A}_4$). The deterministic  system (\ref{0.1}) admits a finite number of compact maximal equivalent  classes (defined in terms of the quasi-potential associated with the rate function of ULDP, see Section {3})  $K_1, K_2, \cdots, K_l$ such that  the set of limit points of any trajectory of the dynamical system (\ref{0.1}) (the so called $\omega-$limit set) is contained in one of the $K_i'$s.
\vskip 0.3cm
Under the assumptions (${\bf A}_1$), (${\bf A}_3$), (${\bf A}_4$),
Freidlin and Wentzell set up a procedure to determine on which stable equivalent classes the limiting measures of the invariant measures $\mu^\vare$ of SDEs (\ref{11})  will be supported  by estimating the escape
time of the solutions $X^\vare$ from a domain and  the so called transitive difficulty matrix $(\mathbb{V}(K_i,K_j)_{l\times l}$ (see the definition in Section {3}, and also  \cite [Theorems 4.1 and 4.2]  {FW} of Chapter 6). These results are now called the Freidlin and Wentzell asymptotics theorem. They imply that the support of the limiting  measures of the invariant measures $\mu^\vare$ stays away from the so called repelling equivalent classes $K_i'$s and acyclic saddle or trap chains $K_i'$s.  Xu et al. \cite{XCJ} proved that the above implied results still hold under (${\bf A}_1$) and (${\bf A}_3$) but without the finiteness  of the number of  compact equivalent classes.
Assuming that the diffusion matrix is uniformly elliptic, imposing the conditions (${\bf A}_3$) and (${\bf A}_4$) and using Freidlin and Wentzell's ULDP technique,  Hwang and Sheu \cite{Hwang} studied the long time behavior and the exponential rate of convergence to the invariant measures of the system (\ref{11}).
\vskip 0.3cm

Now we highlight the main contributions of the current work.

$\bullet$ ULDP under mild conditions and improvement of the Freidlin and Wentzell asymptotics theorem.
The ULDPs are essential to the study of the metastable problems of small random perturbation of the dynamical system (\ref{0.1}). The current conditions (${\bf A}_1$) or (${\bf A}_2$)  for ULDP to hold  are rather restrictive, which excludes many interesting models where the  coefficients of the systems are unbounded and the diffusion coefficients could be degenerate. In the first part of the paper, we obtain a general result on ULDPs of SDEs under Lyapunov conditions on the coefficients, which can be applied to stochastic systems with unbounded coefficients and degenerate diffusions, and significantly extends results in literature. This part of the work is of independent interest and is also essential to the study of the support of the limiting measures. As a result, we improve the
Freidlin and Wentzell asymptotics theorem et al. so that they apply to systems with unbounded coefficients.

$\bullet$ Computation of the transitive difficulty matrix $\left(\mathbb{V}(K_i,K_j)\right)_{4\times 4}$ of two quasipotential planar polynomial systems. By the transitive difficulty matrices and our improved Freidlin and Wentzell's asymptotics theorem,  we prove that the invariant measure $\mu^{\varepsilon}$ of the random  planar dynamical systems converges weakly to the arc-length measure of stable limit cycles as $\varepsilon\rightarrow 0$. In the literature, when considering limiting measures and their concentrations, people always first draw  a  picture of global phase portraits and then design  a transitive difficulty matrix, instead of first studying the global phase portraits of the given system (\ref{0.1}) and then calculating  the transitive difficulty matrix $\left(\mathbb{V}(K_i,K_j)\right)_{l\times l}$ of the given system (\ref{11}), see e.g., {\cite[p.151]{FW}} and \cite[p.557]{FW2}.
    It seems to us this is the first time to compute the transitive difficulty matrix explicitly from  original given SDEs (\ref{11}).

$\bullet$ Characterization of the limit measures of the invariant measures of the van der Pol equation and the van der Pol Duffing equation perturbed by unbounded random noise. We  prove that the invariant measures of the random van der Pol equation converge weakly to the arc-length measure corresponding to the unique limit cycle, and that the invariant measures of the random van der Pol Duffing equation converge weakly to either the convex combination of Delta measures of the stable equilibria, or  the convex combination of Delta measures of the stable equilibria and the arc-length measure of stable limit cycle. We stress here that, due to the degenerate driving noise, the Freidlin and Wentzell asymptotics theorem is not applicable to them.

     Both models are damping Hamiltonian systems or Langevin dynamics. The van der Pol equation describes a self-oscillating triode circuit and the van der Pol Duffing equation  describes single diode circuit (see \cite{Moser,Smale1972}). The study of these models goes back as early as 1927s when van der Pol \cite{van} discovered  an ``irregular noise" in a diode subject to periodic forcing and also  the coexistence of periodic orbits of different periods, which is the first experimental observation of deterministic chaos. Through the years, the study of the dynamical behavior of the van der Pol equation gave rise the birth of the well-known Smale horseshoe, even differential dynamical systems, see \cite {Smale1998, Lev, CL1945, CL1951}.  Because  the random  van der Pol equation has highly degenerate driving noise and polynomial coefficients, to characterize the limit measures of the invariant measures, new difficulties occur and new ideas are needed.

    To obtain the existence of invariant measures for the random van der Pol equations, {because the standard energy functionals are not applicable,} we use a carefully designed Lyapunov function (see (\ref{LyaF})), which not only
     allows polynomial driving noise, but also help us obtain the ULDP  and prove that  both  stochastically and periodically forced van der Pol equation admits a periodic stationary distribution, see Remark \ref{rmk4.1}. The other challenge we encounter is the continuity of its quasipotential in these systems with degenerate driving noise. Unlike the non-degenerate case, what we have only been able to show is that  the quasipotential is upper-continuous.
      Even this is not easy, it involves carefully constructing sample orbits connecting  any pair of states, which are required to satisfy certain restrictions in order to design good controls of rate functions of the corresponding random systems; see (\ref{eq lemma 3.1 1004}), (\ref{eq h 1.1}), (\ref{eq control 000}), (\ref{eq h 2.1}), and (\ref{eq control 001}).
      The upper-continuity of the  quasipotential implies that it is continuous at any equilibrium, see Lemma \ref{PropQ 0808}. Combined with the estimate of rare probability of a small neighborhood of the unstable equilibrium via ULDP, this turns out to be sufficient to characterize the limiting measures. It is somehow surprising.
    Due to the degenerate driving noise and polynomial growth of the coefficients, some other technical difficulties have also to be overcome, see Lemma \ref{PropQ 0808} and Subsection 4.2 for the details.
    Finally, we point out that our method used in this part is valid for any single degree of freedom
stochastic damping Hamiltonian systems because Lemma \ref{PropQ 0808} is effective to all second-order stochastic equation.

\vskip 0.3cm
The paper is organized as follows. Section 2 establishes a general result for  Freidlin and Wentzell's ULDP, which is essential for later sections.  In Section 3, we present new results on  the support of the limiting measures of the invariant measures of the system  (\ref{11}), including the  generalization of the results in \cite{FW}, \cite{XCJ} and \cite{Hwang} to the case of unbounded coefficients. Section 4 is devoted to applications. We give several examples of random perturbations of dynamical  systems with unbounded coefficients, including the realization of Freidlin's two phase portraits and the computation of their transitive difficulty matrices; stochastic van der Pol equation and stochastic van der Pol Duffing equation; stochastic May-Leonard system and an example of systems with unbounded  coefficients and infinitely many  equivalent classes.

\section{Uniform large deviation principles}
In this section, we will establish a ULDP for the laws of the solutions of the stochastic differential equations (\ref{11}) under some mild conditions. Let us now  recall the notion of ULDP from \cite{FW}. Let $(\cE, \rho)$ be a Polish space and  let $\cE_0$ be some topological space that will be used for indexing. We say  a function $I : \cE \rightarrow [0, +\infty]$ is a rate function if it has a compact level sets; i.e., for all $M <\infty$, $\{\varphi : I(\varphi) \leq M\}$ is compact. For any  $\varphi \in \cE$ and $K \subset \cE$, we denote
\[
{\rm dist}(\varphi, K) \doteq \inf_{\varphi \in K} \rho(\varphi, \varphi).
\]

For $x \in \cE_0$, we write $I_x(\cdot)$ to indicate the dependence of the rate function on the value $x$.  Let $\mathscr{K}$ be a collection of all compact subsets of $\cE_0$ and $\Phi_x(s) \doteq \{\varphi : I_x(\varphi) \leq s\}$.

\begin{definition}
A family of rate functions $\{I_x\}_{x \in \cE_0}$ on $\cE$ has compact level sets on compact sets  of $\cE_0$ if, for any $K \in \mathscr{K}$ and for every $s \in [0, \infty)$, $\cup_{x \in K} \Phi_x(s)$ is a compact subset of $\cE$.

\end{definition}

\begin{definition}[Freidlin-Wentzell ULDP]
  A family of $\cE$-valued random variables $\{X^{\vare,x}\}_{\vare>0}$ indexed by $x \in \cE_0$ is said to satisfy a  ULDP with respect to the rate function $I_x,\  x \in \cE_0$, uniformly over $\mathscr{K}$, if

(i) LDP lower bound:  For any $K \in \mathscr{K}, \de>0, \ga>0$ and $s_0>0$, there exists $\vare_0>0$ such that
\begin{equation}
\PP\big(\rho(X^{\vare,x}, \varphi) < \de\big) \geq {\rm exp}\big(-\vare^{-1}(I_x(\varphi) + \ga)\big),
\end{equation}
for all $\vare \in (0, \vare_0]$, $x \in K$ and $\varphi \in \Phi_x(s_0)$;

(ii) LDP upper bound:  For any $K \in \mathscr{K}, \de>0, \ga>0$ and $s_0>0$, there exists $\vare_0>0$ such that
\begin{equation}
\PP\big({\rm dist}(X^{\vare,x}, \Phi_x(s)) \geq \de\big) \leq {\rm exp}\big(-\vare^{-1}(s -\ga)\big),
\end{equation}
for all $\vare \in (0, \vare_0]$, $s \leq s_0$ and $x \in K$.

\end{definition}

The uniform Laplace principle was introduced in  Definition 1.11 in \cite{dupuis-2}.

\begin{definition}
  A family of $\cE$-valued random variables $\{X^{\vare,x}\}_{\vare>0}$ indexed by $x \in \cE_0$ is said to satisfy a uniform Laplace principle  with respect to the rate function $I_x,\  x \in \cE_0$, uniformly over $\mathscr{K}$, if for any $K \in \mathscr{K}$ and any bounded, continuous $h : \cE \rightarrow \RR$,
\begin{equation}
\lim_{\vare \rightarrow 0 }\sup_{x \in K} \Big| \vare {\rm log} \EE{\rm exp }(-\vare^{-1}h(X^{\vare,x})) + \inf_{\varphi \in \cE}\{h(\varphi) + I_x(\varphi)\}\Big| = 0.
\end{equation}

\end{definition}

\subsection{The main result on ULDP}

Throughout,  we will use the following notation. Let $(\mathbb{R}^d, \langle \cdot, \cdot \rangle, |\cdot|)$ be the \emph{d}-dimensional Euclidean space with the inner product $\langle \cdot, \cdot \rangle$ which induces the norm $|\cdot|$. The norm $\| \cdot \|$ stands for the Hilbert-Schmidt norm $\| \sigma \|^2 :=  \sum_{i=1}^{d} {\sum_{j=1}^{m}{\sigma_{ij}^2}}$ for any $d \times m$-matrix $\sigma = (\sigma_{ij}) \in \mathbb{R}^d \otimes \mathbb{R}^m$. $\sigma^*$ stands for the transpose of the matrix $\sigma$.

Let $(\Omega,\mathcal{F},\{{\mathcal{F}}_t\}_{t\geq 0},\mathbb{P})$  be a complete probability space with a filtration $\{{\mathcal{F}}_t\}_{t\geq 0}$ satisfying the usual conditions and $\{B(t)\}_{t \geq 0}$  an \emph{m}-dimensional Brownian motion on this probability space. Fix $T\in(0,\infty)$. Consider the following stochastic differential equations:
\begin{equation}\label{12}
dX^\vare(t)=b(X^\vare(t))\dt+\sqrt{\vare}\sigma(X^\vare(t))dB(t),\ t\in[0,T], \quad x(0)=x \in \RR^d,
\end{equation}
where $\sigma : \mathbb{R}^d \ni x \mapsto \sigma (x) \in \mathbb{R}^d\otimes \mathbb{R}^m$ and $ b:  \mathbb{R}^d\ni x \mapsto b(x) \in \mathbb{R}^d$ are continuous. In the following, we use the notation $X^{\vare,x}$ to indicate the solution of (\ref{12}) starting from $x$.

Let us now introduce the following assumptions.

\begin{ass}\label{ass1}
Let $\vare_0\in(0,1)$. For arbitrary $R>0$, if $|x|\vee|y|\leq R$, there exists $L_R>0$ such that the following locally  monotonicity condition
\begin{equation}\label{21}
2\langle x-y,b(x)-b(y)\rangle+\|\sigma(x)-\sigma(y)\|^2\leq L_R|x-y|^2
\end{equation}
holds for $|x-y|\leq \vare_0$.
\end{ass}
Note that Assumption \ref{ass1} holds if $b$ and $\sigma$ are locally Lipschitz continuous.
\begin{ass}\label{ass2}
There exist a Lyapunov function $V\in C^2(\mathbb R^d;\mathbb R_+)$  and $\theta>0,~\eta>0$ such that
\begin{equation}\label{As b V}
\lim_{|x|\rightarrow+\infty}V(x)=+\infty,
\end{equation}
\begin{equation}\label{22}
J(x):=\langle b(x),\nabla V(x)\rangle+\frac{\theta}{2}\text{Trace}\big(\sigma^{*}(x)\nabla^2 V(x)\sigma(x)\big)+\frac{|\sigma^{*}(x) \cdot \nabla V (x)|^2}{\eta V(x)}\leq C\big(1+V(x)\big),
\end{equation}
and
\begin{equation}\label{23}
\text{Trace}\big(\sigma^{*}(x)\nabla^2 V(x)\sigma(x)\big)\geq -M -CV(x).
\end{equation}
\end{ass}
\noindent Here $\nabla V$ and $\nabla^2 V$ stand for the gradient vector and Hessian matrix of the function
 $V$, respectively, $C,\ M > 0$ are some fixed constants.

The next result gives the existence and uniqueness of the solution of SDE \eqref{12}. Its proof is classical. The existence and uniqueness of a local solution can be established using the Assumption \ref{ass1} and the continuity of $b$ and $\si$.  Furthermore, one can show that the solution is global using the Lyapunov function $V$, \eqref{As b V} and \eqref{22} (cf. \cite [Theorem 3.5, p.75]{KhasminskiiB}).
\begin{prop}\label{prop1}
For any $0<\vare<1$, under Assumptions    {\rm \ref{ass1}} and {\rm \ref{ass2}}, there exists a unique  solution to Eq. \eqref{12} defined on $[0, +\infty)$.
\end{prop}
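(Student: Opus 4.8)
The plan is to prove Proposition \ref{prop1} in two stages: first construct a unique maximal local solution, then use the Lyapunov function $V$ to show the explosion time is almost surely infinite. For the local existence, I would note that Assumption \ref{ass1} gives a local monotonicity/one-sided Lipschitz type estimate on small balls, and together with the continuity of $b$ and $\sigma$ this is enough to run the standard truncation argument: for each $n$, replace $b,\sigma$ by versions $b_n,\sigma_n$ that agree with $b,\sigma$ on $\{|x|\le n\}$ and are globally Lipschitz (or at least globally monotone), solve the truncated SDE to get $X_n$, define stopping times $\tau_n=\inf\{t:|X_n(t)|\ge n\}$, check consistency $X_{n+1}=X_n$ on $[0,\tau_n]$ via local uniqueness (which follows from \eqref{21} by a Gronwall/It\^o argument applied to $|X_n-X_{n+1}|^2$ before either leaves the ball), and set $X^{\vare,x}=X_n$ on $[0,\tau_n)$ with explosion time $\tau_\infty=\lim_n\tau_n$. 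This is the routine part; I would cite it as classical, e.g. along the lines of \cite[Theorem 3.5, p.75]{KhasminskiiB}.

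The substantive step is the non-explosion. Here I would apply It\^o's formula not directly to $V(X^\vare(t))$ but to a suitable function of it designed to absorb the awkward quadratic term $|\sigma^*\nabla V|^2/(\eta V)$ appearing in \eqref{22}. The natural choice is $W(x)=\log(1+V(x))$ or $(1+V(x))^{\kappa}$ for small $\kappa$; computing $\mathcal L^\vare W$, where $\mathcal L^\vare$ is the generator of \eqref{12} with parameter $\vare$, produces exactly a combination of $\langle b,\nabla V\rangle$, $\frac{\vare}{2}\mathrm{Trace}(\sigma^*\nabla^2V\sigma)$ and $-\frac{\vare}{2}\frac{|\sigma^*\nabla V|^2}{(1+V)^2}$ (the second-order correction from $\log$), and the point of condition \eqref{22} — with its $\frac{|\sigma^*\nabla V|^2}{\eta V}$ term and the parameter $\theta$ — is precisely to control $\mathcal L^\vare W$ by $C'(1+W)$ uniformly for $\vare\le\theta$ (one needs $\vare\le\theta$ so that $\frac{\vare}{2}\mathrm{Trace}(\sigma^*\nabla^2V\sigma)\le\frac{\theta}{2}\mathrm{Trace}(\sigma^*\nabla^2V\sigma)$ after using \eqref{23} to handle the sign, plus the $\log$-correction is dominated by the $1/(\eta V)$ term for $\eta$ chosen appropriately). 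Since $\vare<1$, I would simply take $\theta\ge 1$ WLOG, or note the constants can be adjusted; the role of \eqref{23} is to bound the trace term from below so that the estimate survives when $\vare$ is small and the trace term could otherwise be very negative in a way that does not help — actually it is needed to upper bound $-\frac{\vare}{2}$(trace) is not an issue, rather \eqref{23} ensures the $\log$-correction and trace term together stay $\ge -M-CV$, keeping $\mathcal L^\vare W \le C'(1+W)$.

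Once $\mathcal L^\vare W(x)\le C'(1+W(x))$ is established, the conclusion is standard: by It\^o and optional stopping, $\EE[W(X^\vare(t\wedge\tau_n))]\le W(x)+C'\int_0^t(1+\EE[W(X^\vare(s\wedge\tau_n))])\,ds$, Gronwall gives $\EE[W(X^\vare(t\wedge\tau_n))]\le (W(x)+C't)e^{C't}$, hence $\PP(\tau_n\le t)\inf_{|x|=n}W(x)\le (W(x)+C't)e^{C't}$, and since $\inf_{|x|=n}W(x)\to\infty$ by \eqref{As b V} we get $\PP(\tau_n\le t)\to 0$ for every fixed $t$, i.e. $\PP(\tau_\infty\le t)=0$ for all $t$, so $\tau_\infty=\infty$ a.s. and the solution is global. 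The main obstacle — and the only place real care is needed — is the choice of the auxiliary function $W$ and the bookkeeping that shows condition \eqref{22}, with its particular $\frac{|\sigma^*\nabla V|^2}{\eta V}$ summand and the parameters $\theta,\eta$, is exactly what makes $\mathcal L^\vare W$ dissipative up to linear growth in $W$ uniformly in $\vare\in(0,1)$; everything else is textbook. I expect the authors to either spell this out or defer to \cite{KhasminskiiB}, as the statement itself says the proof is classical.
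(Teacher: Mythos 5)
Your two-stage strategy (maximal local solution via truncation under the local monotonicity of Assumption \ref{ass1}, then non-explosion via the Lyapunov function) is exactly what the paper's brief remark and its citation of \cite[Theorem 3.5, p.75]{KhasminskiiB} intend, so the architecture is right. Where you diverge is in the non-explosion estimate, and there your account misreads the role of the term $|\sigma^*\nabla V|^2/(\eta V)$ in \eqref{22}, which makes the $\log(1+V)$ transform an unnecessary detour. That term enters \eqref{22} with a plus sign and is nonnegative, so for Proposition \ref{prop1} one simply discards it: \eqref{22} then gives $\langle b,\nabla V\rangle + \frac{\theta}{2}\mathrm{Trace}(\sigma^*\nabla^2V\sigma)\le C(1+V)$, and for $\vare\le\theta$ one writes
\[
\mathcal L^{\vare}V=\langle b,\nabla V\rangle+\tfrac{\theta}{2}\mathrm{Trace}(\sigma^*\nabla^2V\sigma)-\tfrac{\theta-\vare}{2}\mathrm{Trace}(\sigma^*\nabla^2V\sigma)\le C(1+V)+\tfrac{\theta-\vare}{2}(M+CV)\le C'(1+V),
\]
where the lower bound \eqref{23} controls the last correction. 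This is a linear-growth bound on $V$ itself, so the classical Khasminskii non-explosion criterion applies directly to $V$ and no auxiliary $W$ is needed; your $\log$ argument is correct but proves a stronger bound ($\mathcal L^\vare W\le C''$) that is not required. The quadratic summand in \eqref{22} is present for a different reason: it is precisely what is needed later in the ULDP proof of Section 2.3, where one applies It\^o to $e^{-\eta\int_0^t|h^\vare(s)|^2 ds}V(\cdot)$ and uses the Young-type bound $-\eta|h^\vare|^2V+\langle\nabla V,\sigma h^\vare\rangle\le |\sigma^*\nabla V|^2/(\eta V)$ to absorb the controlled drift $\sigma h^\vare$. Finally, your remark that ``since $\vare<1$, take $\theta\ge 1$ WLOG'' is not justified: enlarging $\theta$ changes \eqref{22} into a genuinely different hypothesis wherever the trace term is positive. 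The estimate above in fact requires $\vare\le\theta$, a restriction the paper's statement glosses over but which matches how the parameters are used elsewhere (e.g.\ $\vare\in(0,\theta/2)$ in the proof of condition (ii)).
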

For each $h\in L^2([0,T],\mathbb R^m)$, consider the so called skeleton equation:
\begin{equation}\label{30}
dX_x^h(t)=b(X_x^h(t))dt+\sigma(X_x^h(t))h(t)dt,
\end{equation}
with the initial value $X_x^h(0)=x$.
We have the following result:
\begin{prop}\label{prop2}
Under Assumptions  {\rm\ref{ass1}} and {\rm\ref{ass2}}, there exists a unique solution to Eq. \eqref{30}.
\end{prop}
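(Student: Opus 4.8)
The plan is to reduce Proposition~\ref{prop2} to a single a priori estimate supplied by the Lyapunov function $V$, which is what compensates for the unboundedness of $b$ and $\sigma$. Granting that estimate, \emph{existence} follows by truncating the coefficients and invoking the classical Carath\'eodory existence theorem, while \emph{uniqueness} follows from the local monotonicity condition \eqref{21} together with Gr\"onwall's lemma; the argument runs in parallel to that of Proposition~\ref{prop1}, the r\^ole of the It\^o correction term being played here by a Young-inequality bound on the control term $\sigma(X)h$. As a harmless preliminary I would replace $V$ by $V+1$ (which leaves \eqref{As b V}, \eqref{22} and \eqref{23} intact) and so assume $\inf_{\RR^d}V\ge 1$, making the last term of \eqref{22} everywhere well-defined.

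\emph{The a priori estimate.} Let $X=X_x^h$ solve \eqref{30} on some $[0,T']\subseteq[0,T]$. Since $h\in L^2([0,T];\RR^m)\subseteq L^1$, $X$ is absolutely continuous and so is $t\mapsto V(X(t))$, with $\tfrac{d}{dt}V(X(t))=\langle\nabla V(X(t)),b(X(t))\rangle+\langle\sigma^*(X(t))\nabla V(X(t)),h(t)\rangle$ a.e. I would estimate the control term by the asymmetric Young inequality tailored to reproduce the ``good'' term of \eqref{22},
\[
\langle\sigma^*(X)\nabla V(X),h(t)\rangle\le\frac{|\sigma^*(X)\nabla V(X)|^2}{\eta V(X)}+\frac{\eta}{4}\,V(X)\,|h(t)|^2,
\]
and then use $\langle\nabla V(X),b(X)\rangle+\frac{|\sigma^*(X)\nabla V(X)|^2}{\eta V(X)}=J(X)-\frac{\theta}{2}\mathrm{Trace}\big(\sigma^*(X)\nabla^2V(X)\sigma(X)\big)$, which by \eqref{22} and the lower Hessian bound \eqref{23} is at most $C_1\big(1+V(X)\big)$. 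This yields $\tfrac{d}{dt}V(X(t))\le C_2\big(1+|h(t)|^2\big)\big(1+V(X(t))\big)$ a.e., and Gr\"onwall's lemma gives $\sup_{0\le t\le T'}V(X(t))\le\big(1+V(x)\big)\exp\!\big(C_2(T+\|h\|_{L^2([0,T];\RR^m)}^2)\big)=:K$, a bound \emph{independent of $T'$}. Since $V(y)\to\infty$ as $|y|\to\infty$, every solution of \eqref{30} then stays inside the fixed compact set $\{V\le K\}$.

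\emph{Existence and uniqueness.} For existence I would pick $R_0$ with $\{V\le K\}\subseteq B_{R_0}:=\{y\in\RR^d:|y|<R_0\}$ and a continuous cut-off $\chi\colon\RR^d\to[0,1]$ equal to $1$ on $\overline{B_{R_0}}$ and to $0$ off $B_{R_0+1}$, and replace $b,\sigma$ by the bounded continuous fields $\chi b,\chi\sigma$; the truncated right-hand side is continuous in $y$, measurable in $t$ and $L^1([0,T])$-dominated, so the Carath\'eodory theorem gives a solution $X$ of the truncated equation on all of $[0,T]$. If $\tau:=\inf\{t:|X(t)|\ge R_0\}$ were finite, then on $[0,\tau)$ (where the truncated coefficients coincide with $b,\sigma$) $X$ solves \eqref{30}, so the a priori estimate gives $V(X(t))\le K$ there, hence $V(X(\tau))\le K$ by continuity, forcing $X(\tau)\in\{V\le K\}\subseteq B_{R_0}$ and contradicting $|X(\tau)|=R_0$; thus $X$ solves \eqref{30} on all of $[0,T]$. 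For uniqueness, if $X^1,X^2$ are two solutions (both continuous, hence valued in some $\{|y|\le R\}$), then wherever $|X^1(t)-X^2(t)|\le\vare_0$, Young's inequality on the cross term together with \eqref{21} gives
\[
\tfrac{d}{dt}|X^1(t)-X^2(t)|^2\le 2\langle X^1-X^2,b(X^1)-b(X^2)\rangle+\|\sigma(X^1)-\sigma(X^2)\|^2+|h(t)|^2|X^1-X^2|^2\le\big(L_R+|h(t)|^2\big)|X^1-X^2|^2 .
\]
Since $L_R+|h(\cdot)|^2\in L^1([0,T])$, Gr\"onwall forces $X^1=X^2$ on every subinterval on which $|X^1-X^2|\le\vare_0$; setting $s^*:=\sup\{s\in[0,T]:X^1\equiv X^2\ \text{on}\ [0,s]\}$, continuity gives $X^1(s^*)=X^2(s^*)$, and if $s^*<T$ then by continuity $|X^1-X^2|\le\vare_0$ on some $[s^*,s^*+\delta]$ and the Gr\"onwall estimate run from $s^*$ extends the coincidence, contradicting maximality; hence $s^*=T$.

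The only genuinely delicate point, and where I expect the main work, is the a priori estimate: because \eqref{30} carries no second-order It\^o term, the required dissipativity must be manufactured from \eqref{22} by \emph{absorbing} the control term $\langle\sigma^*(X)\nabla V(X),h(t)\rangle$ into the term $|\sigma^*(X)\nabla V(X)|^2/(\eta V(X))$ already present in $J$ --- which is exactly why Assumption~\ref{ass2} is phrased in that somewhat unusual form, with the lower Hessian bound \eqref{23} compensating for the loss of the $\tfrac{\theta}{2}\mathrm{Trace}$ term --- and the square-integrability of $h$ is precisely what keeps the resulting Gr\"onwall constant finite.
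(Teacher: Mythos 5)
Your proof is correct and, at the level of ideas, matches the paper's intended argument. The paper itself omits the proof, saying only that it parallels that of Proposition~\ref{prop1} (local existence/uniqueness from Assumption~\ref{ass1} plus continuity, globality from the Lyapunov function as in Khasminskii's book), but the crucial mechanism you identify — absorbing the control term $\langle\sigma^*(X)\nabla V(X),h(t)\rangle$ via Young's inequality into the term $|\sigma^*(X)\nabla V(X)|^2/(\eta V(X))$ already built into $J$, with \eqref{23} compensating for the missing $\frac{\theta}{2}\mathrm{Trace}$ term — is exactly the one the paper deploys when it proves condition~(i) of Theorem~\ref{suffthm} in \eqref{4-1}. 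There the authors use the equivalent exponential-weight trick, defining $W(t)=e^{-\eta\int_0^t|h|^2}V(X(t))$ so that the $\eta V|h|^2$ by-product of Young cancels against the derivative of the weight; your version keeps that term and puts $1+|h(t)|^2$ into the Gr\"onwall kernel, using $h\in L^2$ to keep the constant finite. These two bookkeeping devices are interchangeable, and your truncation-plus-Carath\'eodory existence step and the $\vare_0$-local-monotonicity uniqueness step are both standard and correct; your preliminary replacement $V\mapsto V+1$ is a sensible normalization that also removes the (formally) ill-defined $0/0$ in \eqref{22} where $V$ vanishes.
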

The proof of this proposition is similar to that of Proposition \ref{prop1}, so we omit it here.

\vskip 0.3cm
For any $x \in \RR^d$ and $f \in C([0,T],\mathbb{R}^d)$, we define
\begin{equation}\label{rate-0}
I_x(f)=\inf_{\left\{h \in L^2([0,T]; \mathbb{R}^m):\  f=X_x^h \right\}}\left\{\frac{1}{2}
\int_0^T|h(s)|^2ds\right\},
\end{equation}with the convention $\inf\{\emptyset\}=\infty$,  here $X_x^h\in C([0,T],\mathbb{R}^d)$ solves Eq. (\ref{30}).

We now formulate the main result on ULDP.

\begin{thm}\label{thmULDP}
For $\vare>0$, let $X^{\vare,x}$ be the solution to Eq. \eqref{12}. Suppose Assumptions {\rm \ref{ass1}} and {\rm\ref{ass2}}  are satisfied, then $I_x$ defined by \eqref{rate-0} is a rate function  on $C([0,T],\mathbb{R}^d)$ and the  family $\{I_x, x \in \RR^d\}$ of rate functions has compact
level sets on compacts. Furthermore, $\{X^{\vare,x}\}_{\vare>0}$ satisfies a ULDP on the space $C([0,T],\mathbb{R}^d)$ with the rate function $I_x$, uniformly over the initial value $x$ in bounded subsets of $\RR^d$.
\end{thm}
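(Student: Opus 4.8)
The plan is to deduce the ULDP from a weak convergence (Budhiraja--Dupuis) approach combined with a uniform-in-initial-data tightness argument coming from the Lyapunov function. By the equivalence between the uniform Laplace principle and the Freidlin--Wentzell ULDP for families with compact level sets on compacts (see \cite{dupuis-2}), it suffices to (a) show that $I_x$ is a rate function and that $\{I_x\}$ has compact level sets on compact sets of $\RR^d$, and (b) verify the variational (Laplace) upper and lower bounds uniformly for $x$ in a fixed compact $K\subset\RR^d$. A key preliminary step, used throughout, is a uniform a priori estimate: using \eqref{22} together with Gronwall's inequality one shows that for every $p\ge 1$ and $T>0$ there is a constant $C_{p,T,R}$ with
\[
\sup_{|x|\le R}\,\EE\Big[\sup_{t\in[0,T]}\big(1+V(X^{\vare,x}(t))\big)^p\Big]\le C_{p,T,R},
\]
uniformly in $\vare\in(0,1)$, and the analogous bound for the controlled equation $X^{\vare,x}_h$ obtained by adding the drift $\sigma(X)h$ when $\tfrac12\int_0^T|h(s)|^2\,ds\le N$ (here the cross term is absorbed using precisely the term $|\sigma^*\nabla V|^2/(\eta V)$ appearing in $J$, via Young's inequality). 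Likewise, \eqref{22} gives the same bound for solutions $X^h_x$ of the skeleton equation \eqref{30}. These estimates confine all the relevant processes to large balls with overwhelming probability, which lets us localize and exploit Assumption~\ref{ass1}.

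For part (a): given $N<\infty$ and $K$ compact, the set $\cup_{x\in K}\Phi_x(N)$ is shown to be relatively compact in $C([0,T],\RR^d)$ by an Arzel\`a--Ascoli argument. The skeleton estimate above bounds $\sup_t|X^h_x(t)|$ uniformly over $x\in K$ and $\tfrac12\|h\|_{L^2}^2\le N$; equicontinuity follows from $|X^h_x(t)-X^h_x(s)|\le \int_s^t|b(X^h_x(r))|\,dr+\int_s^t|\sigma(X^h_x(r))||h(r)|\,dr$, the first term controlled by continuity of $b$ on the ball, the second by Cauchy--Schwarz and $\|h\|_{L^2}^2\le 2N$. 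Closedness of $\cup_{x\in K}\Phi_x(N)$ uses that if $x_n\to x$, $h_n\rightharpoonup h$ weakly in $L^2$ with $\|h_n\|\le\sqrt{2N}$, and $X^{h_n}_{x_n}\to f$ in $C([0,T],\RR^d)$, then $f=X^h_x$ (pass to the limit in the integral equation, using that $\sigma(X^{h_n}_{x_n})\to\sigma(f)$ uniformly and $h_n\rightharpoonup h$) and $\tfrac12\|h\|^2\le\liminf\tfrac12\|h_n\|^2\le N$ by weak lower semicontinuity; lower semicontinuity of $f\mapsto I_x(f)$ is the same argument with $x_n\equiv x$. This gives compact level sets on compacts, and in particular each $I_x$ is a rate function.

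For part (b), the Laplace lower and upper bounds: fix a compact $K$ and a bounded continuous $h:C([0,T],\RR^d)\to\RR$. By Girsanov's theorem, $\vare\log\EE\exp(-\vare^{-1}h(X^{\vare,x}))=-\inf\EE[\tfrac12\int_0^T|u^\vare(s)|^2\,ds+h(X^{\vare,x}_{u^\vare})]$, the infimum over controls $u^\vare$ in $L^2([0,T],\RR^m)$-valued $\{\cF_t\}$-adapted processes with $\tfrac12\EE\int_0^T|u^\vare|^2<\infty$, where $X^{\vare,x}_{u^\vare}$ solves \eqref{12} with an added drift $\sigma(X)u^\vare$. One may restrict to controls with $\tfrac12\int_0^T|u^\vare|^2\le N$ a.s.\ for $N$ depending only on $\sup|h|$. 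The lower bound is obtained by choosing a near-optimal deterministic control $h_0$ with $\tfrac12\|h_0\|^2+h(X^{h_0}_x)\le\inf_{f}\{h(f)+I_x(f)\}+\delta$ and feeding it into the controlled SDE; a standard estimate using Assumption~\ref{ass1} on the relevant ball shows $X^{\vare,x}_{h_0}\to X^{h_0}_x$ in probability as $\vare\to0$, \emph{uniformly over $x\in K$}. The upper bound uses tightness: along controls $u^\vare$ with $\tfrac12\int_0^T|u^\vare|^2\le N$, the uniform moment bounds give tightness of $(u^\vare\,ds,\,X^{\vare,x_\vare}_{u^\vare})$ for any $x_\vare\to x\in K$ in the Jakubowski/weak topology, and any limit point is of the form $(u\,ds, X^u_x)$ by passing to the limit in the equation (again exploiting uniform continuity of $\sigma$ on balls and the confinement estimate), whence $\liminf$ of the variational expression is at least $\inf_f\{h(f)+I_x(f)\}$. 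The uniformity over $x\in K$ follows because every bound used depends on $x$ only through a bound on $|x|$; the standard contradiction argument (take $x_\vare\in K$ realizing the worst case) upgrades pointwise convergence to the uniform statement in the definition of the uniform Laplace principle.

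The main obstacle is the last point made precise: obtaining all estimates \emph{uniformly in $x$ over a compact set} under only local monotonicity and a Lyapunov condition, rather than global Lipschitz/bounded coefficients as in (${\bf A}_1$), (${\bf A}_2$). Concretely, the delicate step is controlling the difference $X^{\vare,x}_{u^\vare}-X^{u^\vare}_x$ (or the analogous skeleton differences) when the coefficients grow polynomially: one must combine the a priori moment bounds with a stopping-time localization to the event $\{\sup_t|X|\vee\sup_t|Y|\le R\}$, apply the local monotonicity \eqref{21} on that event, and then let $R\to\infty$ using the uniform moment bounds to kill the contribution of the complement — all while keeping every constant a function of $\sup_{x\in K}|x|$ only. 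The conditions \eqref{22}--\eqref{23} in Assumption~\ref{ass2} are exactly tailored so that this absorption works: the term $|\sigma^*\nabla V|^2/(\eta V)$ dominates the noise–cross terms introduced by the control, and \eqref{23} prevents the Hessian term from destroying the supermartingale-type estimate for small $\theta$.
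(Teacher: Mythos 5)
Your proposal follows essentially the same route as the paper: the Budhiraja--Dupuis weak convergence framework (here packaged via the intermediate sufficient-condition Theorem~\ref{suffthm}), with the exponential-weight Lyapunov estimate $e^{-\eta\int_0^t|h(s)|^2\,ds}V(\cdot)$ to absorb the control cross-terms exactly as you describe, Arzel\`a--Ascoli for compactness of the skeleton level sets, and stopping-time localization to the balls $\{|Y^{\vare,x^\vare}|\le R\}$ so that the local monotonicity \eqref{21} can be used and then $R\to\infty$ removed via the Lyapunov bound. One small inaccuracy worth noting: the claimed uniform $p$-th moment bound $\sup_{|x|\le R}\EE\big[\sup_{t\le T}(1+V(X^{\vare,x}(t)))^p\big]\le C_{p,T,R}$ for all $p\ge1$ does not follow directly from Assumption~\ref{ass2} as stated (one would need control of $V^p$ as a Lyapunov function), but this is harmless since the argument only requires the first-moment/Gronwall bound on the stopped, exponentially-weighted process together with a tail estimate on the escape probability, which is exactly what the paper proves.
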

\begin{rmk}\label{rmk2.1}
Note that  Freidlin and Wentzell's criteria on ULDP  valid for bounded coefficients (${\bf A}_1$) (see \cite [Theorem 3.1, p.135] {FW}) or linear growth drift (${\bf A}_2$) (see \cite [Theorem 1.1, p.86] {FW} ) are corollaries of Theorem \ref {thmULDP} with the Lyapunov function $V(x)=|x|^2+1$.

For the study of the asymptotics of the invariant measures of the diffusion process $\{X^{\varepsilon,x}\}$,  Freidlin  assumes the dissipative condition (see \cite[p.556]{FW2} and \cite[p.110]{FW}):
\begin{equation}\label{FDissiP}
\langle b(x),x\rangle \le -\gamma |x|,\ {\rm for}\ \gamma>0,\ |x|\gg 1.
\end{equation}
In this case, the above Lyapunov function is mostly used in Freidlin--Wentzell's proof of  ULDP in $\RR^d$ (see also subsection 5.2 in \cite{XCJ} and (2.3) in \cite{Hwang}).

\end{rmk}
The proof of Theorem \ref{thmULDP} will be given in subsection 2.3 below.

In the sequel, the symbol $C$ will denote a positive generic constant whose value may change from place to place.

\subsection{A Sufficient Condition for ULDP}
In this section we recall the criteria obtained in \cite{dupuis-1} for proving the uniform Laplace principle. and we will provide a sufficient condition to verify  the criteria.

Let $\cE = C([0,T],\mathbb{R}^d)$. $\rho(\cdot,\cdot)$ stands for the uniform metric in the space $\cE$ and $\cE_0 =\RR^d$. 
Recall that $\mathscr{K}$ is a collection of all compact subsets of $\RR^d$.

The following result shows that a uniform Laplace principle implies the ULDP. Its proof can be found in \cite[Proposition 14]{dupuis-2} and \cite{SA, SBD}.
\begin{prop}\label{eqprop}
Let $I_x$ be a family of rate functions
on $\cE$ parameterized by $x$ in $\cE_0$ and assume that this family has compact level sets on compacts. Suppose that the family of $\cE$-valued random variables $\{X^{\vare, x}\}_{\vare>0}$ satisfies a uniform Laplace principle  with  rate function $I_x$ uniformly over any element in $\mathscr{K}$. Then  $\{X^{\vare, x}\}_{\vare>0}$ satisfies a uniform  LDP with rate function $I_x$ uniformly over any element in $\mathscr{K}$.
\end{prop}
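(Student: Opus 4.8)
The plan is to run the classical weak-convergence (Dupuis--Ellis) equivalence between a Laplace principle and a large deviation principle, but carried out so that every estimate is uniform in $x$ over a fixed $K\in\mathscr K$ (cf. \cite{dupuis-2}). For a single family this equivalence is standard; the only genuinely new ingredient here is the hypothesis that $\{I_x\}$ has compact level sets on compacts, which will be used exactly at the points where an $x$-dependent choice of test function must be traded for a fixed finite list. I would verify the lower bound (i) and the upper bound (ii) of the Freidlin--Wentzell ULDP separately, each time applying the uniform Laplace principle to finitely many bounded continuous (Lipschitz) ``tent'' functions built from distances to balls and to sublevel sets, and then converting the resulting bounds on $\vare\log\EE\,{\rm e}^{-\vare^{-1}h(X^{\vare,x})}$ into the required bounds on $\PP(\rho(X^{\vare,x},\varphi)<\de)$ and $\PP({\rm dist}(X^{\vare,x},\Phi_x(s))\ge\de)$.

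For the lower bound, fix $K$ and $\de,\ga,s_0>0$. Since $\Gamma:=\cup_{x\in K}\Phi_x(s_0)$ is compact by hypothesis, pick a finite $\de/3$-net $\varphi_1,\dots,\varphi_N$ of $\Gamma$, and for each $j$ let $h_j$ be bounded continuous with $0\le h_j\le s_0+\ga$, vanishing on $\overline{B(\varphi_j,\de/3)}$ and equal to $s_0+\ga$ off $B(\varphi_j,2\de/3)$. Given $x\in K$ and $\varphi\in\Phi_x(s_0)$, choose $j$ with $\rho(\varphi,\varphi_j)<\de/3$, so $h_j(\varphi)=0$ and $\inf_\psi\{h_j(\psi)+I_x(\psi)\}\le I_x(\varphi)$. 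The uniform Laplace principle for $h_j$ then gives, for $\vare$ small and uniformly in $x\in K$, $\EE\,{\rm e}^{-\vare^{-1}h_j(X^{\vare,x})}\ge {\rm e}^{-\vare^{-1}(I_x(\varphi)+\ga/2)}$; splitting this expectation over $\{\rho(X^{\vare,x},\varphi)<\de\}$ and its complement, on which $h_j\equiv s_0+\ga$ (because $\rho(\psi,\varphi)\ge\de$ forces $\rho(\psi,\varphi_j)\ge 2\de/3$), gives $\EE\,{\rm e}^{-\vare^{-1}h_j(X^{\vare,x})}\le \PP(\rho(X^{\vare,x},\varphi)<\de)+{\rm e}^{-\vare^{-1}(s_0+\ga)}$. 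Since $I_x(\varphi)\le s_0$, combining the two and shrinking $\vare$ forces $\PP(\rho(X^{\vare,x},\varphi)<\de)\ge {\rm e}^{-\vare^{-1}(I_x(\varphi)+\ga)}$; only $N$ functions are involved, so $\vare_0$ is uniform, which is (i).

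For the upper bound --- which I expect to be the \emph{main obstacle} --- fix $K,\de,\ga,s_0$ and put $F_{x,s}:=\{\psi:{\rm dist}(\psi,\Phi_x(s))\ge\de\}$. The clean point is that $\inf_{F_{x,s}}I_x\ge s$: since $\Phi_x(s)$ is exactly the $s$-sublevel set, ${\rm dist}(\psi,\Phi_x(s))>0$ forces $I_x(\psi)>s$. Two reductions make this usable against fixed functions. First, by monotonicity $\Phi_x(s')\subseteq\Phi_x(s)$ for $s'\le s$, so $\{X^{\vare,x}\in F_{x,s}\}\subseteq\{X^{\vare,x}\in F_{x,\tau}\}$ whenever $\tau\le s$; discretizing into finitely many levels $\tau$ in $(0,s_0]$ with consecutive gaps below $\ga/2$ (the range $s\le\ga$ is trivial, as there ${\rm e}^{-\vare^{-1}(s-\ga)}\ge1$) reduces (ii) to proving $\PP(X^{\vare,x}\in F_{x,\tau})\le {\rm e}^{-\vare^{-1}(\tau-\ga/2)}$ uniformly in $x\in K$, for each such $\tau$. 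Second, to remove the $x$-dependence of the natural test function $n(1\wedge{\rm dist}(\cdot,\Phi_x(\tau)))$, cover the compact set $\Gamma=\cup_{x\in K}\Phi_x(s_0)$ by finitely many balls $B(\psi_1,\de/5),\dots,B(\psi_L,\de/5)$ with $\psi_i\in\Gamma$, so that $\{X^{\vare,x}\in F_{x,\tau}\}\subseteq\{{\rm dist}(X^{\vare,x},\Gamma)\ge\de/5\}\cup\bigcup_{i=1}^{L}\{\rho(X^{\vare,x},\psi_i)<2\de/5,\ X^{\vare,x}\in F_{x,\tau}\}$.

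The first event is handled by one fixed tent function $g$ equal to $s_0+\ga$ on $\Gamma$ and $0$ off the $\de/5$-neighbourhood of $\Gamma$: as $I_x>s_0$ off $\Gamma$, one has $\inf_\psi\{g+I_x\}\ge s_0$, so the uniform Laplace principle gives $\PP({\rm dist}(X^{\vare,x},\Gamma)\ge\de/5)\le {\rm e}^{-\vare^{-1}(s_0-\ga/4)}$. For the $i$-th remaining event: either it is empty, or ${\rm dist}(\psi_i,\Phi_x(\tau))\ge 3\de/5$, and in the latter case ${\rm dist}(\psi,\Phi_x(\tau))\ge\de/10>0$ for every $\psi\in\overline{B(\psi_i,\de/2)}$, so $I_x>\tau$ there; hence the fixed tent function $q_i$ vanishing on $\overline{B(\psi_i,2\de/5)}$ and equal to $s_0+\ga$ off $B(\psi_i,\de/2)$ has $\inf_\psi\{q_i+I_x\}\ge\tau$, and the uniform Laplace principle for $q_i$ yields the bound ${\rm e}^{-\vare^{-1}(\tau-\ga/4)}$ (an empty event contributes $0$). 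Summing the $1+L$ pieces, the prefactor absorbed for $\vare$ small, gives $\PP(X^{\vare,x}\in F_{x,\tau})\le {\rm e}^{-\vare^{-1}(\tau-\ga/2)}$; only finitely many fixed functions enter, hence $\vare_0$ is uniform, completing (ii). The genuine difficulty is exactly this uniformity over both $x$ and $s$ while testing only against functions fixed in advance: the compact-level-sets-on-compacts hypothesis is what makes $\cup_{x\in K}\Phi_x(s_0)$ coverable by finitely many small balls, and it combines with the cheap observations that $I_x>s_0$ off that set and $I_x>\tau$ on the relevant balls (the case $\Phi_x(\tau)=\emptyset$ is harmless, as then $I_x>\tau$ everywhere). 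Everything else --- passing from the $\EE\,{\rm e}^{-\vare^{-1}h}$ estimates to probability estimates, and checking the stated infimum properties of the tent functions --- is routine.
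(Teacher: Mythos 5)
Your proof is correct: the paper does not prove this proposition in-house but delegates it to \cite[Proposition 14]{dupuis-2} and \cite{SA, SBD}, and your argument is a sound, self-contained rendition of exactly that standard mechanism --- using the compactness of $\cup_{x\in K}\Phi_x(s_0)$ to replace the $x$-dependent (and, in the upper bound, $s$-dependent) test functions by a finite fixed family to which the uniform Laplace principle is applied. All the quantitative steps check out: the $\de/3$-net and the implication $\rho(\psi,\varphi)\ge\de\Rightarrow\rho(\psi,\varphi_j)\ge 2\de/3$ in the lower bound; the observations $I_x>s_0$ off $\Gamma$ and $I_x>\tau$ on $\overline{B(\psi_i,\de/2)}$ (via $\de-2\de/5=3\de/5$ and $3\de/5-\de/2=\de/10$) in the upper bound; the monotone discretization in $s$ with mesh below $\ga/2$; and the absorption of the prefactor $1+L$ for small $\vare$.
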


Let
\begin{equation*}
S^N :=\{h\in L^2([0,T],\mathbb R^m):|h|_{L^2([0,T],\mathbb R^m)}^2\leq N\},
\end{equation*}
and
\begin{equation*}
\tilde{S}^N :=\{\phi : \phi \ \text{is} \ \mathbb{R}^m\text{-valued} \  {\mathcal{F}}_t\text{-predictable\  process\  such\ that}\  \phi(\omega) \in S^N,\  \mathbb{P}\text{-}a.s.\}.
\end{equation*}
$S^N$ will be  endowed with the weak topology on $L^2([0,T],\mathbb R^m)$, under which $S^N$ is a compact Polish space.

For any $\vare >0$, let $\Ga^{\vare}: \cE_0 \times C([0,T],\mathbb{R}^m) \rightarrow \cE$ be a measurable mapping. Set $X^{\vare, x} := \Ga^{\vare}(x, B(\cdot))$.
The following result was proved in \cite{dupuis-1}.
\begin{thm}[A Criteria of Budhiraja-Dupuis] \label{BD-criteria}
Suppose that there exists a measurable map $\Ga^{0} : \cE_0 \times C([0,T],\mathbb{R}^m) \rightarrow \cE$ and let
\begin{equation}\label{rate}
I_x(f)=\inf_{\left\{h \in L^2([0,T]; \mathbb{R}^m):\  f=\Ga^0(x, \int_0^{\cdot} h(s)ds) \right\}}\big\{\frac{1}{2}
\int_0^T |h(s)|^2ds\big\}.
\end{equation}
Suppose that for all $f \in \cE$, $x \mapsto I_x(f)$ is a lower semi-continuous (l.s.c.) map
from $\cE_0$ to $[0,\infty]$ and the following conditions hold:

{\rm (a)} for every $N<\infty$ and $K \in \mathscr{K}$, the set
\[
\Lambda_{N,K} \doteq \big\{ \Ga^0(x, \int_0^{\cdot} h(s)ds): h \in S^N, x \in K\big\}
\]
is a compact subset of $\cE$;

{\rm (b)} for every $N<\infty$ and any families $\{h^{\vare}\} \subset \tilde{S}^N$ and $\{x^{\vare}\} \subset \cE_0$ satisfying that $x^{\vare} \rightarrow x$ and  $h^{\vare}$ converges in law to some element $h$ as $\vare \rightarrow 0$, $\Ga^{\vare}\big(x^{\vare}, B(\cdot) + \frac{1}{\sqrt{\vare}}\int_0^{\cdot}{h^{\vare}(s)ds}\big)$ converges in law to $\Ga^0(x, \int_0^{\cdot} h(s)ds)$ as $\vare \rightarrow 0$.

Then for all $x \in \cE_0$, $I_x$ is a rate function on $\cE$, the family $\{I_x, x \in \cE_0\}$ of rate functions has compact level sets on compacts and $\{X^{\vare, x}\}_{\vare>0}$ satisfies a uniform Laplace principle  with respect to rate function $I_x$ uniformly over $\mathscr{K}$.

\end{thm}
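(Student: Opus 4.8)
The plan is to establish the three conclusions — that each $I_x$ is a rate function, that $\{I_x\}_{x\in\cE_0}$ has compact level sets on compacts, and that $\{X^{\vare,x}\}$ obeys the uniform Laplace principle — from the Bou\'e--Dupuis variational representation for exponentials of Wiener functionals. For $h:\cE\to\RR$ bounded and continuous and $\vare>0$ this representation reads
\begin{equation*}
-\vare\log\EE\exp\!\big(-\vare^{-1}h(X^{\vare,x})\big)=\inf_{u}\EE\Big[\tfrac12\int_0^T|u(s)|^2\ds+h\big(\Ga^\vare\big(x,B(\cdot)+\tfrac1{\sqrt\vare}\int_0^\cdot u(s)\ds\big)\big)\Big],
\end{equation*}
the infimum running over all $\RR^m$-valued $\mathcal{F}_t$-predictable $u$ with $\int_0^T|u(s)|^2\ds<\infty$ a.s.; since $h$ is bounded, near-optimal $u$ have uniformly bounded expected cost, and a standard truncation lets us take them in $\tilde{S}^{N}$ for $N=N(h)$ large. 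I would first dispose of the rate-function and compact-level-set claims: if $I_x(\varphi)\le s$ then $\varphi\in\Lambda_{2s+2,\{x\}}$, so $\Phi_x(s)$ and, more generally, $\cup_{x\in K}\Phi_x(s)\subset\Lambda_{2s+2,K}$ are precompact by hypothesis (a); the required closedness — equivalently, lower semicontinuity of $I_x$ in $\varphi$ and the joint lower semicontinuity over compact $K$ — comes from weak-$L^2$ compactness of $S^{2s+2}$, weak lower semicontinuity of $g\mapsto\tfrac12\int_0^T|g|^2\ds$, the assumed lower semicontinuity of $x\mapsto I_x(\varphi)$, and a diagonal use of condition (b) to identify weak limits.

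For the uniform Laplace principle, writing $F^\vare(x):=-\vare\log\EE\exp(-\vare^{-1}h(X^{\vare,x}))$ and $F^0(x):=\inf_{\varphi\in\cE}\{h(\varphi)+I_x(\varphi)\}$, I would prove the two one-sided estimates $\limsup_{\vare\to0}\sup_{x\in K}(F^\vare(x)-F^0(x))\le0$ and $\liminf_{\vare\to0}\inf_{x\in K}(F^\vare(x)-F^0(x))\ge0$, each by contradiction: if one fails, extract (by compactness of $K$) sequences $\vare_n\downarrow0$, $x_n\to x_0\in K$ realizing the violation. For the first (the large-deviation lower bound), fix $\delta>0$ and a near-optimal \emph{deterministic} control $g$ with $h(\Ga^0(x_0,\int_0^\cdot g\ds))+\tfrac12\int_0^T|g|^2\ds\le F^0(x_0)+\delta$, insert $g$ into the representation for $F^{\vare_n}(x_n)$, and use condition (b) (with $x^{\vare_n}=x_n$, $h^{\vare_n}\equiv g$) together with boundedness and continuity of $h$ to get $\limsup_n F^{\vare_n}(x_n)\le F^0(x_0)+\delta$; combined with lower semicontinuity of $F^0$ on $K$ this contradicts $F^{\vare_n}(x_n)>F^0(x_n)+\epsilon$. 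For the second (the large-deviation upper bound), pick near-optimal \emph{random} controls $u^{\vare_n}\in\tilde{S}^{N}$ in the representation for $F^{\vare_n}(x_n)$; by compactness of $S^{N}$ in the weak topology the laws of the pairs $(x_n,u^{\vare_n})$ are tight, so along a subsequence $(x_n,u^{\vare_n})\to(x_0,u)$ in law, and condition (b) forces $\Ga^{\vare_n}(x_n,B+\tfrac1{\sqrt{\vare_n}}\int_0^\cdot u^{\vare_n}\ds)\to\Ga^0(x_0,\int_0^\cdot u\ds)$ in law; passing to a Skorokhod realization, Fatou's lemma, weak lower semicontinuity of the $L^2$-cost, bounded continuity of $h$, and $\tfrac12\int_0^T|u|^2\ds\ge I_{x_0}(\Ga^0(x_0,\int_0^\cdot u\ds))$ yield $\liminf_n F^{\vare_n}(x_n)\ge\EE\big[I_{x_0}(\Ga^0(x_0,\int_0^\cdot u\ds))+h(\Ga^0(x_0,\int_0^\cdot u\ds))\big]\ge F^0(x_0)$, contradicting $F^{\vare_n}(x_n)<F^0(x_n)-\epsilon$ via upper semicontinuity of $F^0$ on $K$.

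The heart of the matter, and the main obstacle, is precisely the \emph{uniformity} over $K$, not the pointwise Laplace principle (which is the classical Budhiraja--Dupuis argument). Two points need care: first, that $F^0$ is continuous on the compact set $K$ — this is where the hypothesis that $x\mapsto I_x(\varphi)$ is lower semicontinuous must be combined with the compact-level-sets-on-compacts property just established and the limit identification in condition (b), since an infimum over $\varphi$ of lower semicontinuous functions need not itself be semicontinuous; and second, that a single near-optimal control (deterministic in one bound, random and $\tilde{S}^{N}$-valued in the other) can be carried simultaneously along $x_n\to x_0$, so that condition (b) applies to a genuinely convergent pair $(x^{\vare},h^{\vare})$. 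The degeneracy and polynomial growth of the coefficients in \eqref{12} play no role at this abstract level — they enter only when one verifies hypotheses (a) and (b) for the concrete SDE in the proof of Theorem \ref{thmULDP} — so here the entire difficulty is weak-convergence bookkeeping; the full details are in \cite{dupuis-1}.
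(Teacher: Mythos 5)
The paper does not prove this statement: Theorem \ref{BD-criteria} is quoted verbatim as the abstract criterion of Budhiraja--Dupuis--Maroulas and the text simply says ``The following result was proved in \cite{dupuis-1}.'' Your sketch reconstructs exactly the argument of that reference — the variational (Bou\'e--Dupuis) representation, condition (a) for precompactness of level sets, deterministic near-optimal controls for one Laplace bound and tight $\tilde S^N$-valued controls plus Skorokhod/Fatou for the other, with uniformity over $K$ handled by subsequence extraction — so it is the same approach as the paper's source, and it is correct in outline. You also correctly identify the two genuinely delicate points (the semicontinuity of $x\mapsto\inf_\varphi\{h(\varphi)+I_x(\varphi)\}$ and the identification of subsequential limits of the near-optimal trajectories $\Gamma^0(x_n,\int_0^\cdot g_n\,ds)$), both of which require the diagonal application of condition (b) that you allude to, since (b) as stated only constrains $\Gamma^\vare$ and not $\Gamma^0$ directly. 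No further comparison is possible since the paper contains no proof of its own.
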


Next we present a sufficient condition for verifying the assumptions in Theorem \ref{BD-criteria}.  It is a modification of Theorem 3.2 in \cite{MSZ}.
\begin{thm}\label{suffthm}
Suppose that there exists a measurable map $\Ga^{0} : \cE_0 \times C([0,T],\mathbb{R}^m) \rightarrow \cE$ such that

{\rm (i)} for every $N < +\infty$, $x_n \rightarrow x$ and any family $\{h_n, n\in\mathbb{N}\} \subset S^N$ converging  weakly to some element $h$ as $n \rightarrow \infty$, $\Ga^0 \big( x_n, \int_0^{\cdot}{h_n}(s)ds\big)$ converges to $\Ga^0 \big(x, \int_0^{\cdot}{h(s)ds}\big)$ in the space $C([0,T],\mathbb{R}^d)$;

{\rm (ii)} for every $N < +\infty$, $\{x^{\vare}, \vare>0\} \subset \{x:  |x| \leq N \}$ and any family $\{h^{\vare}, \vare > 0\} \subset \tilde{S}^N$ and any $\delta > 0$,
\[
\lim_{\vare \rightarrow 0} \mathbb{P}\big(\rho(Y^{\vare,x^\vare},Z^{\vare,x^\vare})> \delta \big) = 0,
\]
where $Y^{\vare,x^\vare}= \Ga^{\vare}\big( x^\vare, B(\cdot) + \frac{1}{\sqrt{\vare}}\int_0^{\cdot}{h^{\vare}(s)ds}\big)$
and $Z^{\vare, x^\vare}=\Ga^0 \big(x^{\vare}, \int_0^{\cdot}{h^{\vare}(s)ds}\big)$.

Then for all $x \in \cE_0$, $I_x$ defined by \eqref{rate} is a rate function on $\cE$, the family $\{I_x, x \in \cE_0\}$ of rate functions has compact level sets on compacts and $\{X^{\vare, x}\}_{\vare>0}$ satisfies a uniform Laplace principle  with the rate function $I_x$ uniformly over $\mathscr{K}$.
\end{thm}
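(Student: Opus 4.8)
The plan is to show that hypotheses (i) and (ii) supply exactly what is needed to apply the Budhiraja--Dupuis criterion, Theorem \ref{BD-criteria}, with the same pair of maps $\Ga^0,\Ga^\vare$, and then to quote that theorem verbatim. Thus I must extract from (i)--(ii) three things: lower semicontinuity of $x\mapsto I_x(f)$ for each fixed $f\in\cE$; compactness of the sets $\Lambda_{N,K}$ (condition (a) of Theorem \ref{BD-criteria}); and the convergence-in-law property (condition (b)). The organizing observation is that $S^N$, carrying the weak $L^2([0,T],\RR^m)$-topology, is a compact Polish space, so $\cE_0\times S^N$ is metrizable; hence hypothesis (i) is precisely the assertion that
\[
F_N:\cE_0\times S^N\longrightarrow\cE,\qquad F_N(y,g):=\Ga^0\Big(y,\int_0^{\cdot}g(s)\,ds\Big),
\]
is jointly continuous for every $N<\infty$. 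Everything else is built on this.

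For condition (a) I would simply note that $\Lambda_{N,K}=F_N(K\times S^N)$ is the continuous image of the compact set $K\times S^N$, hence compact. For the lower semicontinuity of $x\mapsto I_x(f)$, I would fix $f\in\cE$, take $x_n\to x$ in $\cE_0$, set $\ell:=\liminf_n I_{x_n}(f)$, assume (as we may) that $\ell<\infty$ and, after passing to a subsequence, that $I_{x_n}(f)\to\ell$ with every term finite; then choose near-minimizers $h_n\in L^2([0,T],\RR^m)$ with $f=\Ga^0\big(x_n,\int_0^{\cdot}h_n(s)ds\big)$ and $\tfrac12\|h_n\|_{L^2}^2\le I_{x_n}(f)+\tfrac1n$. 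These $h_n$ are bounded in $L^2$, so a further subsequence satisfies $h_n\rightharpoonup h$ weakly, and by weak lower semicontinuity of the norm $\tfrac12\|h\|_{L^2}^2\le\liminf_n\tfrac12\|h_n\|_{L^2}^2=\ell$. Since all the $h_n$, and the weak limit $h$ (balls in $L^2$ being weakly closed), lie in one common ball $S^{N'}$, hypothesis (i) gives $f=\Ga^0\big(x_n,\int_0^{\cdot}h_n\big)\to\Ga^0\big(x,\int_0^{\cdot}h\big)$, i.e. $f=\Ga^0\big(x,\int_0^{\cdot}h\big)$, whence $I_x(f)\le\tfrac12\|h\|_{L^2}^2\le\ell$.

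The remaining point is condition (b). Given $N<\infty$, $\{x^\vare\}\subset\cE_0$ with $x^\vare\to x$, and $\{h^\vare\}\subset\tilde{S}^N$ with $h^\vare$ converging in law (as $S^N$-valued random elements) to $h$, I would first note that $\{x^\vare\}$ is bounded, so hypothesis (ii)---applied with $N$ enlarged to absorb that bound---yields $\rho(Y^{\vare,x^\vare},Z^{\vare,x^\vare})\to0$ in probability, where $Z^{\vare,x^\vare}=F_N(x^\vare,h^\vare)$. Since $x^\vare\to x$ deterministically while $h^\vare$ converges in law, $(x^\vare,h^\vare)$ converges in law to $(x,h)$ in $\cE_0\times S^N$; the continuous mapping theorem applied to $F_N$ then gives $Z^{\vare,x^\vare}\Rightarrow F_N(x,h)=\Ga^0\big(x,\int_0^{\cdot}h(s)ds\big)$, and a Slutsky-type argument (gap $\to0$ in probability together with convergence in law of $Z^{\vare,x^\vare}$) transfers the limit to $Y^{\vare,x^\vare}$. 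This is exactly condition (b); with (a), (b) and the lower semicontinuity established, Theorem \ref{BD-criteria} applies and delivers the claimed uniform Laplace principle (and, via Proposition \ref{eqprop}, the ULDP as well).

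The step I expect to be the most delicate is the verification of (b): one must be careful that weak convergence in law on the compact space $S^N$, together with the deterministic convergence $x^\vare\to x$, really does give joint convergence in law, and that the Slutsky-type transfer from $Z^{\vare,x^\vare}$ to $Y^{\vare,x^\vare}$ is legitimate---here it is, because the gap going to zero in probability is precisely hypothesis (ii). The weak-$L^2$-compactness used in the lower-semicontinuity step is the other place needing a little care. Neither point is deep; the actual content of the theorem is this reduction to Theorem \ref{BD-criteria}, and the genuine work---checking (i) and (ii) for the controlled version of \eqref{12} and its skeleton \eqref{30}---is carried out separately (in the proof of Theorem \ref{thmULDP} in Subsection 2.3).
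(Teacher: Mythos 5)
Your proposal follows essentially the same route as the paper's proof: reduce to Theorem~\ref{BD-criteria} by (a) observing that $\Lambda_{N,K}$ is the continuous image of the compact set $K\times S^N$ via condition~(i), (b) establishing lower semicontinuity of $x\mapsto I_x(f)$ by extracting near-minimizing controls $h_n$, passing to a weakly convergent subsequence in a common ball $S^{N'}$, and invoking weak lower semicontinuity of the $L^2$-norm together with condition~(i), and (c) verifying condition~(b) of the criterion by combining the in-probability gap from hypothesis~(ii) with the convergence in law of $Z^{\vare,x^\vare}$ furnished by the continuity of $\Ga^0$ from~(i). The paper's treatment of the Slutsky step is phrased slightly differently (via bounded uniformly continuous test functions rather than an explicit ``Slutsky-type'' invocation), and you are somewhat more explicit about enlarging $N$ to cover the bound on $\{x^\vare\}$, but the underlying argument is the same.
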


\begin{proof}
We will show that the conditions in Theorem \ref{BD-criteria} are fulfilled. Condition (a) in Theorem \ref{BD-criteria}  follows from condition (i) because $S^N$ and $K$ are both compact sets.
Fix $f \in \cE$ and $x_n \rightarrow x$ in $\cE_0$, we prove that $I_x(f) \leq \varliminf_{x_n \rightarrow x} I_{x_n}(f)$. Without loss of generality, we assume that $\lim_{x_n \rightarrow x}  I_{x_n}(f) =\varliminf_{x_n \rightarrow x} I_{x_n}(f) < M-1 <\infty$ for some $M >0$. By the definition of $I_{x_n}$, there exists
$h_n \in S^{2M}$ satisfies $f= \Ga^0(x_n, \int_0^{\cdot} h_n(s)ds)$  such that
\[
I_{x_n}(f) \geq \frac{1}{2}\int_0^T |h_n(s)|^2ds -\frac{1}{n}.
\]
By the compactness of $S^{2M}$, there exist $h \in S^{2M}$ such that $ h_n$ (take a subsequence if necessary) converging weakly to $h$. Thus, condition (i) implies  that  $f = \Ga^0(x, \int_0^{\cdot} h(s)ds)$, then
\[
\lim_{x_n \rightarrow x} I_{x_n}(f) \geq \frac{1}{2}\int_0^T |h(s)|^2ds  \geq I_x(f).
\]
Theorefore, for all $f \in \cE$, $x \mapsto I_x(f)$ is a lower semi-continuous map
from $\cE_0$ to $[0,\infty]$.

Condition (ii) implies that for any bounded, uniformly continuous function $F(\cdot)$ on $\cE$,
\begin{equation}\label{suff-1}
\lim_{\vare \rightarrow 0} \EE[|F(Y^{\vare,x^\vare})-F(Z^{\vare,x^\vare})|] =0.
\end{equation}
Because the mapping $\Ga^0$ is continuous by condition (i) and $(x^\vare, h^\vare)$ converge in law to $(x, h)$, then $Z^{\vare,x^\vare}$ converges in law to $\Ga^0(x, \int_0^{\cdot} h(s)ds)$ in the space $\cE$. Combined with \eqref{suff-1} we see that   condition (b) holds. The proof is complete.
\end{proof}

\subsection{Proof of Theorem \ref{thmULDP}}
Let $S^N$ and $\tilde{S}^N$ be defined as in Section 3. According to  Proposition \ref{prop2}, there exists  a measurable mapping $\Ga^0$ from $\RR^d \times C([0,T], \mathbb{R}^m)$ to $C([0,T], \mathbb{R}^d)$ such that $X_x^{h} = \Ga^0 \big(x, \int_0^{\cdot}{h(s)ds}\big)$ for  $x \in \RR^d$ and $h\in L^2([0,T],\mathbb R^m)$.

By the Yamada-Watanabe theorem, the existence of a unique strong solution of Eq. \eqref{12} and Assumption \ref{ass1} implies that for
every $\vare>0$, there exists a measurable mapping $\Ga^{\vare}:\RR^d \times C([0,T], \mathbb{R}^m)\rightarrow C([0,T], \mathbb{R}^d)$ such that
\begin{equation*}
X^{\vare,x}=\Ga^{\vare}( x, B(\cdot) ),
\end{equation*}
and applying the Girsanov theorem, for any $N>0$ and $h^{\vare}\in \tilde{S}^N$,
\begin{equation}\label{52}
Y^{\vare,x} := \Ga^{\vare}\big(x,  B(\cdot) + \frac{1}{\sqrt{\vare}}\int_0^{\cdot}{h^{\vare}(s)ds}\big)
\end{equation}
is the solution of the following SDE
\begin{equation}\label{32}
Y^{\vare,x}(t)=x+\int_0^tb(Y^{\vare}(s))ds+\int_0^t\sigma(Y^{\vare}(s)) h^\vare(s)ds+\sqrt{\vare}\int_0^t\sigma(Y^{\vare}(s))dB(s).
\end{equation}

By virtue of the Proposition \ref{eqprop} and Theorem \ref{suffthm}, to prove Theorem \ref{thmULDP}, we need to verify the conditions (i) and (ii) in Theorem \ref{suffthm} for the measurable maps $\Ga^\vare$ and $\Ga^0$.

The verification of Conditions (i) and (ii) is similar to the proof of Proposition 3.1 in \cite{WYZZ}, we here only give a sketch.

{\bf Proof of condition (i):}   Let $x_n \rightarrow x$ and $\{h_n\}_{n \in \NN} \subset S^N$ converges in the weak topology to $h$ as $n \rightarrow \infty$.

Define $W_{x_n}^{h_n}(t)=e^{-\eta\int_0^{t}|h_n(s)|^2ds}V(X_{x_n}^{h_n}(t))$, where $\theta > 0, \eta > 0$ and $V$ are in Assumption \ref{ass2}. Apply the chain rule  and \eqref{22}, \eqref{23}, we get
\begin{eqnarray}\label{4-1}
&&W_{x_n}^{h_n}(t) \notag\\
&\leq& V(x_n) + \int_0^t e^{-\eta\int_0^{s}|h_n(r)|^2dr}\big[ \langle \nabla V(X_{x_n}^{h_n}(s)),b(X_{x_n}^{h_n}(s))\rangle +\frac{|\sigma^{*}(X_{x_n}^{h_n}(s)) \cdot \nabla V (X_{x_n}^{h_n}(s))|^2}{\eta V(X_{x_n}^{h_n}(s))}\big]ds \notag \\
&\leq& V(x_n) + \int_0^t e^{-\eta\int_0^{s}|h_n(r)|^2dr}[C(1+V(X_{x_n}^{h_n}(s))) +\frac{\theta}{2}(M+CV(X_{x_n}^{h_n}(s)))]ds \notag\\
&\leq& V(x_n) + C\int_0^t 1+W_{x_n}^{h_n}(s)\  ds.
\end{eqnarray}
Applying the Gronwall inequality, the above inequality \eqref{4-1} yields   $\sup_{n\in\mathbb{N}}\sup_{t\in[0,T]}V(X_{x_n}^{h_{n}}(t))<\infty$.
Thus,
\begin{equation}
 \sup_{n\in\mathbb{N}}\sup_{t\in[0,T]}|X_{x_n}^{h_n}(t)|\leq L
\end{equation}
for some constant $L>0$.

Next, using the Arzela-Ascoli theorem, we can show that $\{X_{x_n}^{h_n},n\in\mathbb{N}\}$ is pre-compact in the space $C([0,T],\mathbb R^d)$ and
\[
 X_{x_n}^{h_n} \rightarrow \tilde{X} \quad \text{in}~C([0,T],\mathbb{R}^d),
\]
for some $\tilde{X}$.
Then, the uniqueness of skeleton equation \eqref{30} implies $ \tilde{X} = X_x^h$. Therefore, condition (i) holds.

{\bf Proof of condition (ii):} Let $\{x^{\vare}, \vare>0\} \subset \{x:  |x| \leq N \}$ and a family $\{h^{\vare}, \vare > 0\} \subset \tilde{S}^N$, we need to prove that
$\rho(Y^{\vare,x^\vare},Z^{\vare,x^\vare}) \rightarrow 0$ in probability as $\vare \rightarrow 0$, where $Y^{\vare,x^\vare}= \Ga^{\vare}\big( x^\vare, B(\cdot) + \frac{1}{\sqrt{\vare}}\int_0^{\cdot}{h^{\vare}(s)ds}\big)$
and $Z^{\vare, x^\vare}=\Ga^0 \big(x^{\vare}, \int_0^{\cdot}{h^{\vare}(s)ds}\big)$.

Recall that $\vare_0$ is the constant appeared in Assumption \ref{ass1}.
For $R>0$, $0<p\leq \vare_0$, define
\begin{equation*}
\tau^{\vare}_R=\inf\{t\geq0:|Y^{\vare,x^\vare}(t)|\geq R\},\quad \tau^\vare_{p}=\inf\{t\geq0:|Y^{\vare,x^\vare}(t)-Z^{\vare,x^\vare}(t)|^2\geq p\}.
\end{equation*}
From the proof of \eqref{4-1} we also see that  there exists a constant $L>0$ such  that
\[
\sup_{\vare>0}\sup_{t\in[0,T]}|Z^{\vare,x^\vare}(t)|\leq L.
\]

Applying the It\^{o} formula to $e^{-\int_0^t (L_R +|h^\vare(s)|^2)ds}|Y^{\vare, x^\vare}(t)-Z^{\vare, x^\vare}(t)|^2$, replacing $t$ by the stopping time  $\hat{\tau}^\vare:=T\wedge \tau^\vare_R\wedge\tau^\vare_{p}$ and  taking expectation, we obtain
\begin{eqnarray}
\EE e^{-\int_0^{\hat{\tau}^\vare} (L_R +|h^\vare(s)|^2)ds}|Y^{\vare, x^\vare}(\hat{\tau}^\vare)-Z^{\vare,, x^\vare}(\hat{\tau}^\vare)|^2 \leq \vare \EE \int_0^{\hat{\tau}^\vare} \|\si(Y^{\vare, x^\vare}(s))\|^2 ds.
\end{eqnarray}
 Thus,
\begin{equation}\label{4-2}
\lim_{\vare \rightarrow 0} \PP(|Y^{\vare, x^\vare}(\hat{\tau}^\vare)-Z^{\vare, x^\vare}(\hat{\tau}^\vare)|^2  \geq p ) =0.
\end{equation}

Noting that
\[
\{\tau^\vare_{p}\leq T\wedge\tau^{\vare}_R\}\subset \{\sup_{s\leq T\wedge\tau^\vare_R\wedge\tau^\vare_{p}}|Y^{\vare, x^\vare}(s)-Z^{\vare, x^\vare}(s)|^2\geq p\}\subset \{|Y^{\vare, x^\vare}(\hat{\tau}^\vare)-Z^{\vare, x^\vare}(\hat{\tau}^\vare)|^2\geq p\},
\]
it follows from \eqref{4-2} that
\begin{equation}\label{4-3}
  \lim_{\vare\rightarrow 0}\PP\big(\tau^\vare_{p}\leq \tau^{\vare}_R\wedge T\big) \leq \lim_{\vare\rightarrow 0}\PP\big(\sup_{s\leq T\wedge\tau^\vare_R\wedge\tau^\vare_{p}}|Y^{\vare, x^\vare}(s)-Z^{\vare, x^\vare}(s)|^2\geq p\big)=0.
\end{equation}

Next, we prove
\begin{equation}\label{4-4}
\lim_{R\rightarrow\infty}\sup_{\vare\in(0,\frac{\theta}{2})}\PP\big(\tau^\vare_R \leq T\wedge\tau^\vare_{p}\big)=0.
\end{equation}
Applying the It\^{o} formula to $U^{\vare, h^\vare}(t) = e^{-\eta\int_0^{t}|h^\vare(s)|^2ds}V(Y^{\vare, x^\vare}(t))$ gives
\begin{eqnarray}\label{4-6}
&&\EE U^{\vare, h^\vare}(t\wedge \tau^\vare_{R}\wedge\tau^\vare_{p})\notag \\
&\leq& V(x^\vare) + \EE \int_0^{t\wedge \tau^\vare_{R}\wedge\tau^\vare_{p}} e^{-\eta\int_0^s |h^\vare(r)|^2dr} \big[\langle \nabla V(Y^{\vare, x^\vare}(s)),b(Y^{\vare, x^\vare}(s))\rangle \notag \\
&&+\frac{|\sigma^{*}(Y^{\vare, x^\vare}(s)) \cdot \nabla V(Y^{\vare, x^\vare}(s))|^2}{\eta V(Y^{\vare, x^\vare}(s))}+\vare\cdot\text{Trace}\big(\nabla^2 V(Y^{\vare, x^\vare}(s))\sigma(Y^{\vare, x^\vare}(s))\sigma^{*}(Y^{\vare, x^\vare}(s))\big)\big]ds \notag\\
&\leq& V(x^\vare) +\EE \int_0^{t\wedge \tau^\vare_{R}\wedge\tau^\vare_{p}} e^{-\eta\int_0^s |h^\vare(r)|^2dr}
\big[C(1+V(Y^{\vare, x^\vare}(s))) +(\frac{\theta}{2}-\vare)(CV(Y^{\vare, x^\vare}(s))+ M\big] ds \notag \\
&\leq& V(x^\vare) + C \EE \int_0^t  \big(1 +U^{\vare, h^\vare}(s\wedge \tau^\vare_{R}\wedge\tau^\vare_{p})\big) ds.
\end{eqnarray}
Applying the Gronwall inequality again, \eqref{4-6}  yields $\EE U^{\vare, h^\vare}(T\wedge \tau^\vare_{R}\wedge\tau^\vare_{p}) \leq (V(x^\vare)+C)e^{CT}$.
From the above inequality, we deduce that .
\[
\PP\big(\tau^\vare_R \leq T\wedge\tau^\vare_{p}\big) \leq \frac{(C+ \sup_{\vare \in (0, \frac{\theta}{2})}V(x^\vare))e^{CT+\et N}}{\inf_{|x| \geq R}V(x)}
\]
Let $R\rightarrow\infty$, we obtain \eqref{4-4}  since $\sup_{\vare >0} |x^\vare| \leq N$.

For arbitrary $\delta>0$, we have
\begin{eqnarray*}
&&\PP\big(\sup_{0\leq s\leq T}|Y^{\vare, x^\vare}(s)-Z^{\vare, x^\vare}(s)|\geq\de\big)\\
&=&\PP\big(\sup_{0\leq s\leq T}|Y^{\vare, x^\vare}(s)-Z^{\vare, x^\vare}(s)|\geq\de,\tau^\vare_R\wedge\tau^\vare_{\de^2}>T\big) \\
&&+\PP\big(\sup_{0\leq s\leq T}|Y^{\vare, x^\vare}(s)-Z^{\vare, x^\vare}(s)|\geq\de,\tau^\vare_R\leq T\wedge\tau^\vare_{\de^2}\big)\\
&&+\PP\big(\sup_{0\leq s\leq T}|Y^{\vare, x^\vare}(s)-Z^{\vare, x^\vare}(s)|\geq\de,\tau^\vare_{\de^2}\leq \tau^\vare_R\wedge T\big)\\
&\leq&\PP\big(\sup_{0\leq s\leq T\wedge\tau^\vare_R\wedge\tau^\vare_{\de^2}}|Y^{\vare, x^\vare}(s)-Z^{\vare, x^\vare}(s)|^2\geq \de^2\big)+\PP\big(\tau^\vare_R\leq T\wedge\tau^\vare_{\de^2}\big)+\PP\big(\tau^\vare_{\de^2}\leq \tau^\vare_R\wedge T\big).
\end{eqnarray*}
\eqref{4-3} (with $p=\de^2$) implies that
\begin{eqnarray*}
\lim_{\vare\rightarrow 0}\PP\big(\sup_{0\leq s\leq T}|Y^{\vare, x^\vare}(s)-Z^{\vare, x^\vare}(s)|\geq\de\big)
\leq
\sup_{\vare\in(0,\frac{\theta}{2})}\PP\big(\tau^\vare_R\leq T\wedge\tau^\vare_{\de^2}\big).
\end{eqnarray*}
Let $R\rightarrow \infty$ and \eqref{4-4} to get
\begin{equation*}
\lim_{\vare\rightarrow0}\PP\big(\sup_{0\leq s\leq T}|Y^{\vare, x^\vare}(s)-Z^{\vare, x^\vare}(s)|\geq\de\big)=0.
\end{equation*}
Therefore, condition (ii) holds.
\begin{rmk}\label{rmk2.2}
If the diffusion processes $X^{\vare}$ lives in a domain $D$ with boundary, to apply Theorem \ref{thmULDP} one only needs to slightly adjust the conditions on $V$ in Assumption \ref{ass2} to the condition that $V \in C^2(D; \RR_+)$ and $\lim_{\underset{|x| \rightarrow \infty}{ x \in D}} V(x) = +\infty$.
\end{rmk}

\section{The concentration of limiting measures and metastability}

In this section, we will apply the ULDP obtained  in Section 2 to describe the support of the limiting measures of the invariant measures of the system  (\ref{11}).

From now on we denote $C([0,T],\mathbb{R}^d)$ by ${\bf C}_{T}$. ${\bf C}_T^x$ (${\bf AC}_T^x$) will denote the space of (absolutely)  continuous functions on $[0,T]$ started from $x$ with values in $\mathbb{R}^d$.

\emph{Quasipotential}, introduced by Freidlin and Wentzell (see, e.g., \cite [p.90 or p.142]{FW}),
is a very useful notion and is defined by
\[  \mathbb{V}(x,y):=\inf\big\{I_{x}^T(\varphi):\varphi(0)=x,\ \varphi(T)=y,\ T> 0\big\},\: x,\ y\in\mathbb{R}^d.  \]
where $I_x^T(\cdot)$ is the rate function defined in (\ref{rate-0}).
We can also define $\mathbb{V}$ on pairs of subsets in $\mathbb{R}^d$ as follows:
\[
 \mathbb{V}(D_0,D):=\inf\big\{I_x^{T}(\varphi):\varphi(0)\in D_0,\varphi(T)\in D, T> 0\big\}, \: D_0,D\subset\mathbb{R}^d.
\]
We first assume that the noise is non-degenerate:
\begin{equation}\label{E}
\beta^{*}\sigma(x)\sigma^{*}(x)\beta >0\ {\rm for\ all}\ x\in \mathbb{R}^d\ {\rm and}\ \beta \in \mathbb{R}^d\backslash \{0\}.
\end{equation}
Under this assumption, the rate function has the following expression
\begin{equation}\label{actfalini}
  I_{x}^{T}(\varphi)
=\left\{
\begin{array}{ll}
\int\limits_{0}^{T}L(\varphi(t),\dot{\varphi}(t)) {\rm d}t, & \hbox{if $\varphi\in{\bf AC}_T^x$,} \\
+\infty, & \hbox{otherwise,}
\end{array}
\right.
\end{equation}
where $L(u,\beta)=\frac{1}{2}\big(\beta-b(u)\big)^{*} \left(\sigma(u)\sigma^{*}(u)\right)^{-1}\big(\beta-b(u)\big), u,\beta\in\mathbb{R}^d$. It follows from \cite [Lemma 2.3, p.94]{FW} or \cite [Lemma 2.5]{XCJ} that

\begin{lem}\label{Vcon}
Suppose that {\rm(\ref{E})} holds. Then for any $R>0$ there exists $L=L(R)>0$ such that for any $x,y\in \bar{B}_R(O)$, the smooth function $\varphi(t)=x+\frac{t}{|y-x|}(y-x),\ t\in [0,|y-x| ]$ satisfies
\begin{equation*}\label{VconE}
I_{x}^{|y-x|}(\varphi)\le L|y-x|
\end{equation*}
which implies that $\mathbb{V}(x,y)$ is continuous.
\end{lem}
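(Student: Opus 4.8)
The plan is to verify the claimed estimate $I_x^{|y-x|}(\varphi)\le L|y-x|$ directly from the explicit formula \eqref{actfalini} for the rate function, and then argue that this bound forces $\mathbb{V}$ to be continuous. First I would fix $R>0$ and take $x,y\in\bar B_R(O)$. Write $\ell=|y-x|$, and consider the straight-line path $\varphi(t)=x+\frac{t}{\ell}(y-x)$ for $t\in[0,\ell]$, so that $\dot\varphi(t)=\frac{1}{\ell}(y-x)$, a unit vector, and $|\varphi(t)|\le R$ for all $t\in[0,\ell]$. Since $\varphi\in{\bf AC}_\ell^x$, by \eqref{actfalini} we have $I_x^\ell(\varphi)=\int_0^\ell L(\varphi(t),\dot\varphi(t))\,dt$ with $L(u,\beta)=\frac12(\beta-b(u))^*(\sigma(u)\sigma^*(u))^{-1}(\beta-b(u))$.

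The key point is a uniform bound on the integrand. On the compact ball $\bar B_R(O)$ the continuous function $b$ is bounded, so $|\dot\varphi(t)-b(\varphi(t))|\le 1+\sup_{|u|\le R}|b(u)|=:c_1(R)$. For the quadratic form, by \eqref{E} the symmetric matrix $a(u):=\sigma(u)\sigma^*(u)$ is positive definite for every $u$; since $u\mapsto a(u)$ is continuous and $\bar B_R(O)$ is compact, its smallest eigenvalue attains a positive minimum $\lambda_R:=\min_{|u|\le R}\lambda_{\min}(a(u))>0$, hence $a(u)^{-1}\preceq \lambda_R^{-1}I$ on $\bar B_R(O)$. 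Therefore $L(\varphi(t),\dot\varphi(t))\le \frac{1}{2\lambda_R}c_1(R)^2=:L(R)/1$, and integrating over $t\in[0,\ell]$ gives
\begin{equation*}
I_x^\ell(\varphi)\le \frac{c_1(R)^2}{2\lambda_R}\,\ell = L(R)\,|y-x|,
\end{equation*}
which is the asserted inequality with $L(R)=c_1(R)^2/(2\lambda_R)$.

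Finally, to deduce continuity of $\mathbb{V}$: by definition $\mathbb{V}(x,y)=\inf_{T>0}\inf\{I_x^T(\psi):\psi(0)=x,\psi(T)=y\}$, so the bound just proved gives $\mathbb{V}(x,y)\le L(R)|y-x|$ for all $x,y\in\bar B_R(O)$. To turn this into joint continuity of $(x,y)\mapsto\mathbb{V}(x,y)$, I would use the triangle-type inequality $\mathbb{V}(x,z)\le\mathbb{V}(x,y)+\mathbb{V}(y,z)$, which follows from concatenating controls (reparametrizing two paths on $[0,T_1]$ and $[0,T_2]$ into one path on $[0,T_1+T_2]$, and noting the rate function is additive over concatenation). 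Given points $x,x',y,y'$ in $\bar B_R(O)$, write $\mathbb{V}(x,y)\le\mathbb{V}(x,x')+\mathbb{V}(x',y')+\mathbb{V}(y',y)\le \mathbb{V}(x',y')+L(R)(|x-x'|+|y-y'|)$ and symmetrically, so $|\mathbb{V}(x,y)-\mathbb{V}(x',y')|\le L(R)(|x-x'|+|y-y'|)$; since $R$ is arbitrary this yields continuity (indeed local Lipschitz continuity) of $\mathbb{V}$ on all of $\mathbb{R}^d\times\mathbb{R}^d$.

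The main obstacle is really just the uniform positive lower bound $\lambda_R>0$ on the eigenvalues of $\sigma\sigma^*$ over the compact set: the pointwise nondegeneracy \eqref{E} plus continuity and compactness delivers it, but this is exactly where the argument would break down for a genuinely degenerate noise, which is why the authors flag this lemma as the non-degenerate counterpart to the more delicate upper-semicontinuity results needed later. Everything else — boundedness of $b$ on compacts, additivity of the action under concatenation, and the triangle inequality for $\mathbb{V}$ — is routine.
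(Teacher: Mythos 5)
Your proof is correct and is essentially the standard argument that underlies the references ([FW, Lemma 2.3, p.94] and [XCJ, Lemma 2.5]) that the paper cites for this lemma: bound $|\dot\varphi-b|$ on the compact ball, use continuity of $\sigma\sigma^*$ together with the pointwise nondegeneracy \eqref{E} and compactness to get a uniform lower bound $\lambda_R>0$ on the smallest eigenvalue, and integrate the resulting pointwise bound on the Lagrangian over $[0,|y-x|]$; continuity then follows from the $\mathbb{V}$-triangle inequality (concatenation of controls) together with $\mathbb{V}(x,y)\le L(R)|y-x|$. Two trivial remarks: there is a typo ``$L(R)/1$'' for $L(R)$, and it is worth saying explicitly that $\varphi(t)\in\bar B_R(O)$ because the ball is convex.
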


Consider now the small noise perturbation
\begin{equation}\label{HEQ}
\begin{cases}
    X_{\varepsilon}^{(2)}(t) = b\big(X_{\varepsilon}(t), X_{\varepsilon}^{(1)}(t)\big) + \sqrt\eps \si\big(X_{\varepsilon}(t), X_{\varepsilon}^{(1)}(t)\big) \dot{B}(t), \\
    (X_{\varepsilon},X_{\varepsilon}^{(1)})(0) = (x_1, x_2)=:x \in \RR^2
\end{cases}
\end{equation}
of single degree of freedom second-order differential equation:
\begin{equation}\label{HEQU}
\begin{cases}
    X_{\varepsilon}^{(2)}(t) = b\big(X_{\varepsilon}(t), X_{\varepsilon}^{(1)}(t)\big), \\
    (X_{\varepsilon},X_{\varepsilon}^{(1)})(0) = (x_1, x_2)=:x \in \RR^2
\end{cases}
\end{equation}
where $X_{\varepsilon}^{(i)}$ stands for the derivative of order $i$,  the coefficients $b$ and $\si$ are real-valued, locally Lipschitz continuous functions on $\RR^2$ and $\si(x) \ne 0$ for all $x \in \RR^2$.

Let $Y_{\varepsilon}:=(Y_{\varepsilon}^1,Y_{\varepsilon}^2) := (X_{\varepsilon},X_{\varepsilon}^{(1)})$. Then \eqref{HEQ} is equivalent to the following system of SDEs:
\begin{equation}\label{HEQU}
\begin{cases}
    dY_{\varepsilon}^{1}(t) = Y_{\varepsilon}^{2}(t)dt, \\
    dY_{\varepsilon}^{2}(t) = b\big(Y_{\varepsilon}^{1}(t),  Y_{\varepsilon}^{2}(t)\big)dt + \sqrt\eps \si\big(Y_{\varepsilon}^{1}(t),  Y_{\varepsilon}^{2}(t)\big) dB(t),\\
    (Y_{\varepsilon}^{1},Y_{\varepsilon}^{2})(0) = (x_1, x_2).\\

\end{cases}
\end{equation}
In this case, the rate function for the large deviation  of $Y_{\varepsilon}$ can be written as
\begin{equation}\label{high-rate-0808}
  I_{x}^{T}(\psi)
=\left\{
\begin{array}{ll}
\frac{1}{2}\int_{0}^{T}\Big|\frac{\varphi^{(2)}(s) - b(\varphi(s),  \varphi^{(1)}(s))}{\si(\varphi(s), \varphi^{(1)}(s))}\Big|^2ds & \hbox{if $\psi=(\varphi, \varphi^{(1)})\in{\bf AC}_T^x$,} \\
+\infty, & \hbox{otherwise,}
\end{array}
\right.
\end{equation}
and the corresponding quasipotential can be expressed as
\[
 \mathbb{V}(x,y) =\inf\big\{I_{x}^T(\psi): \psi(0)=(\varphi,  \varphi^{(1)})(0)=x,\ \psi(T)=(\varphi, \varphi^{(1)})(T) =y,\ T> 0\big\},\: x,\ y\in\mathbb{R}^2.
\]

Lemma \ref{Vcon} implies that for any $x,y\in \bar{B}_R(O)$, there exists a sampling orbit $\varphi(t)$ connecting $x$ and $y$ and  spending the time $T(x,y)=|y-x|$ such that $I_{x}^{|y-x|}(\varphi)\le L|y-x|$. In particular, $T(x,y)\le 2R$. The continuity of $\mathbb{V}$ and the boundedness of spent time of the above constructed  sampling orbit play an important role in estimating rare probability of certain subsets of (\ref{11}) with non-degenerate noise. However,  the driving noise of (\ref{HEQU}) is degenerate. We cannot prove the version of Lemma \ref{Vcon} for system (\ref{HEQU}) because $\varphi(t)$ is not a sampling function as defined in (\ref{high-rate-0808}), instead prove the following weaker version, which is sufficient to study the limiting measures of stochastic van der Pol equation and stochastic van der Pol Duffing equation in Section 4.
\begin{lem}\label {PropQ 0808}
Suppose that the rate function $I_x^T$ is of the form {\rm(\ref{high-rate-0808})}.
 Then \\
{\rm (i)} for any $(x,y)\in \RR^2\times \RR^2$ and $T>0$, there exists  $\psi^x\in C_{T}$ with $\psi^x(0)=x$ and $\psi^x(T)=y$ such that $I^{T}_x(\psi^x)<\infty$.
Consequently, for any $(x,y)\in \RR^2\times \RR^2$, $\mathbb{V}(x,y)<\infty;$\\
 {\rm (ii)} the quasipotential $\mathbb{V}$ is upper semi-continuous on $\RR^2\times \RR^2$, that is, for any $(x,y)\in \RR^2\times \RR^2$,
\begin{eqnarray}\label{eq Part 1}
    \lim_{\delta\searrow0}\sup_{(\tilde{x},\tilde{y})\in B_\delta((x,y))\setminus (x,y)}\mathbb{V}(\tilde{x},\tilde{y})\leq \mathbb{V}(x,y),
\end{eqnarray}
in particular, $\mathbb{V}$ is continuous at any equilibrium of {\rm(\ref{HEQU})};\\
{\rm (iii)} for given $(x_0,y_0)\in \RR^2\times \RR^2$ and $\delta, \eta>0$, suppose that $\mathbb{V}(x,y_0)<\eta$ for all $x\in \overline{B}_{\delta}(x_0):=\{x\in\mathbb{R}^2:|x-x_0|\leq \delta\}$. Then there exists a constant $T_0\in(0,\infty)$ such that, for any $x\in \overline{B}_{\delta}(x_0)$, there exist $T^x\in(0,T_0]$ and $\psi^x\in C_{T^x}$ with $\psi^x(0)=x$ and $\psi^x(T^x)=y_0$ such that $I^{T^x}_x(\psi^x)<\eta$.
\end{lem}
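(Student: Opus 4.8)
\smallskip
\noindent\textbf{Proof proposal.} All three parts rest on one device: \emph{cubic Hermite interpolation in the position variable}. Given $T>0$ and prescribed values $\varphi(0),\varphi^{(1)}(0),\varphi(T),\varphi^{(1)}(T)$ there is a unique cubic polynomial $\varphi$ on $[0,T]$ attaining them, and its coefficients — hence $\varphi,\varphi^{(1)},\varphi^{(2)}$ on $[0,T]$ — depend linearly, and for fixed $T$ boundedly, on the four data. Crucially, $\psi:=(\varphi,\varphi^{(1)})$ then has its second component equal to the derivative of its first, so it is an admissible sampling path in the sense of \eqref{high-rate-0808} (unlike the straight segment of Lemma \ref{Vcon}, which fails here precisely because in phase space the second coordinate must be the derivative of the first). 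For (i): given $x,y\in\RR^2$ and $T>0$, let $\varphi$ be the cubic with $(\varphi,\varphi^{(1)})(0)=x$, $(\varphi,\varphi^{(1)})(T)=y$ and set $\psi^x:=(\varphi,\varphi^{(1)})$. Then $\psi^x\in C_T\cap{\bf AC}_T^x$ takes values in a compact set on which $b$ is bounded and $|\si|$ is bounded below by a positive constant (continuity and $\si\neq0$); since $\varphi^{(2)}$ is affine hence bounded on $[0,T]$, the integrand in \eqref{high-rate-0808} is bounded, so $I^T_x(\psi^x)<\infty$, and in particular $\mathbb V(x,y)\le I^T_x(\psi^x)<\infty$.

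For (ii), fix $(x,y)$ and $\ep>0$. By (i) $\mathbb V(x,y)<\infty$, so there are $T>0$ and $\psi=(\varphi,\varphi^{(1)})\in{\bf AC}^x_T$ with $\psi(0)=x$, $\psi(T)=y$ and $I^T_x(\psi)\le\mathbb V(x,y)+\ep$. For $(\tilde x,\tilde y)$ with $\de:=|(\tilde x,\tilde y)-(x,y)|$ small, let $p$ be the cubic on $[0,T]$ with $(p,p^{(1)})(0)=\tilde x-x$, $(p,p^{(1)})(T)=\tilde y-y$, and put $\psi_\de:=(\varphi+p,\varphi^{(1)}+p^{(1)})\in{\bf AC}^{\tilde x}_T$, which ends at $\tilde y$. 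For fixed $T$, $\|p\|_\infty+\|p^{(1)}\|_\infty+\|p^{(2)}\|_\infty\le C\de$. Writing $g:=(\varphi^{(2)}-b(\varphi,\varphi^{(1)}))/\si(\varphi,\varphi^{(1)})\in L^2([0,T])$ and using that the perturbed path stays in a fixed compact set together with the local Lipschitz continuity there of $b$ and of $1/\si^2$, the integrand of $I^T_{\tilde x}(\psi_\de)$ equals $(1+\eta_1(t))g(t)+\eta_2(t)$ with $\|\eta_1\|_\infty,\|\eta_2\|_\infty\le C\de$. The sharp inequality $(a+c)^2\le(1+\kappa)a^2+(1+\kappa^{-1})c^2$ then yields
\[
I^T_{\tilde x}(\psi_\de)\le (1+\kappa)(1+C\de)^2 I^T_x(\psi)+\tfrac12(1+\kappa^{-1})C^2\de^2 T,
\]
and choosing $\kappa$ and then $\de$ small makes the right side $\le I^T_x(\psi)+\ep\le\mathbb V(x,y)+2\ep$; hence $\mathbb V(\tilde x,\tilde y)\le\mathbb V(x,y)+2\ep$ for $(\tilde x,\tilde y)$ near $(x,y)$, and letting $\de\searrow0$ then $\ep\searrow0$ gives \eqref{eq Part 1}. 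At an equilibrium $\bar e=(e,0)$ of \eqref{HEQU} we have $b(e,0)=0$, so the constant path has zero rate and $\mathbb V(\bar e,\bar e)=0$; combined with the upper semicontinuity just proved and $\mathbb V\ge0$, this forces $\mathbb V$ to be continuous at $(\bar e,\bar e)$.

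For (iii) the task is to replace the pointwise hypothesis $\mathbb V(x,y_0)<\eta$ — whose defining infimum uses an a priori $x$-dependent, unbounded time — by a uniform time. For each $x\in\overline{B}_\de(x_0)$ choose $s_x>0$ and $\psi^x\in{\bf AC}^x_{s_x}$ ending at $y_0$ with $I^{s_x}_x(\psi^x)<\eta$, and set $\ep_x:=\tfrac12(\eta-I^{s_x}_x(\psi^x))>0$. Running the perturbation of part (ii) over the \emph{same} interval $[0,s_x]$, now with the target held fixed at $y_0$ (so only the initial point is moved), we obtain $r_x>0$ such that every $x'\in B_{r_x}(x)$ admits $\psi^{x,x'}\in{\bf AC}^{x'}_{s_x}$ with $\psi^{x,x'}(s_x)=y_0$ and $I^{s_x}_{x'}(\psi^{x,x'})\le I^{s_x}_x(\psi^x)+\ep_x<\eta$ (possible since $I^{s_x}_x(\psi^x)$ is a fixed finite number). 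The balls $\{B_{r_x}(x):x\in\overline{B}_\de(x_0)\}$ cover the compact set $\overline{B}_\de(x_0)$; extract a finite subcover $B_{r_{x_1}}(x_1),\dots,B_{r_{x_k}}(x_k)$ and set $T_0:=\max_{1\le i\le k}s_{x_i}<\infty$. Any $x\in\overline{B}_\de(x_0)$ lies in some $B_{r_{x_i}}(x_i)$, and then $T^x:=s_{x_i}\in(0,T_0]$ together with $\psi^x:=\psi^{x_i,x}$ is the required pair.

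The main obstacle is the perturbation estimate in (ii): since a near-optimal path is only known to satisfy $(\varphi^{(2)}-b)/\si\in L^2$ rather than $L^\infty$, one cannot argue naively that an $O(\de)$ change of $\varphi$ changes the integrand by $O(\de)$ — the estimate must be multiplicative, extracting the dominant term $(1+O(\de))g$ and absorbing it through $(a+c)^2\le(1+\kappa)a^2+(1+\kappa^{-1})c^2$ instead of $(a+c)^2\le 2a^2+2c^2$; and it is essential that the Hermite correction be carried out over the \emph{fixed} time interval $[0,T]$, so that $p,p^{(1)},p^{(2)}$ are all genuinely $O(\de)$ (a short connecting segment would have large curvature and would not be cheap). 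Everything else — the Hermite bounds, the compactness packaging in (iii), and checking that the perturbed paths remain in a fixed compact set — is routine.
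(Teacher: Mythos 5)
Your proof is correct, and for part (ii) it is genuinely different from — and considerably cleaner than — what the paper does. The paper's construction of a comparison path from $(\tilde x,\tilde y)$ prepends a short control of unit magnitude to steer the velocity from $\tilde x_2$ to $x_2$, runs the original control shifted in time, appends another unit-magnitude control to steer the velocity from $y_2$ to $\tilde y_2$, and then, to fix up the position error accumulated by all these time-shifts, either lets the system drift with zero control (Case $y_2\neq0$, Subcase 1), or inserts a triangular bump control $\theta^{\pm}_j$ calibrated by the cubic root of the position error (Subcase 2 and Case $y_2=0$); this forces a three-way case split on the sign of $y_2$ with further subcases inside. Your cubic Hermite correction $p$ on the \emph{entire} interval $[0,T]$ eliminates all of that: it matches both position and velocity at both ends in one stroke, its three relevant derivatives are uniformly $O(\de)$ by linearity, and the integrand decomposition $(1+\eta_1)g+\eta_2$ followed by the weighted Young inequality $(a+c)^2\le(1+\kappa)a^2+(1+\kappa^{-1})c^2$ handles the fact that $g$ is only in $L^2$ — precisely the pitfall you correctly flag and that also underlies (but is obscured by) the paper's estimate of $\int|h|^2-\int|h^\sigma|^2$. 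Parts (i) (cutoff interpolation in the paper vs.\ Hermite interpolation for you — essentially the same idea) and (iii) (the same Heine--Borel covering argument, which the paper only sketches and you spell out) are in agreement with the paper's proof.
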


\begin{proof}
Fix $x=(x_1,x_2),y=(y_1,y_2)\in\RR^2$ and $T>0$.
Let $f_1(t) = x_1 +x_2t$ and $f_2(t) = y_1 +y_2(t-T)$. We choose a cut-off function $\al \in C^{\infty}(\RR)$:
 \begin{eqnarray}
    \al(t)
       =
         \left\{
  \begin{array}{ll}
  1, & t \in [0,\frac{T}{3}] \\
   >0, & t\in (\frac{T}{3},\frac{2T}{3}) \\
    0, & t \in [\frac{2T}{3},T].
  \end{array}
\right.
\end{eqnarray}
Then $\varphi = \al f_1+ (1-\al)f_2$ satisfies $(\varphi, \varphi^{(1)})(0)= x, (\varphi, \varphi^{(1)})(T)= y$. By the assumptions that the coefficients $b$ and $\si$ are real-valued, locally Lipschitz continuous functions on $\RR^2$ and $\si(x) \ne 0$ for all $x \in \RR^2$, it is easy to see that $I_x^T((\varphi, \varphi^{(1)})) < \infty$.
The proof of (i) is complete.

We are now in the position to prove (ii). Notice that, for any $\tilde{x},\tilde{y}\in\mathbb{R}^2$, $\mathbb{V}(\tilde{x},\tilde{y})\geq0$, and that if $U$ is a equilibrium of \rm(\ref{HEQU}) and $x,y\in U$, then $\mathbb{V}(x,y)=0$. Once we prove  $\eqref{eq Part 1}$, the claim that $\mathbb{V}$ is continuous at any equilibrium of {\rm(\ref{HEQU})} holds. In the following, we prove \eqref{eq Part 1}.

To this end, we first introduce the following two functions, which will be used later.

For $j>0$, let
\begin{eqnarray}
    \theta^-_j(t)
       =
         \left\{
  \begin{array}{ll}
  0, & \hbox{$t<0$;} \\
    -t, & \hbox{$t\in[0,j]$;} \\
    t-2j, & \hbox{$t\in(j,2j+\sqrt{2}j]$;} \\
    0, & \hbox{$t>2j+\sqrt{2}j$,}
  \end{array}
\right.
\end{eqnarray}
and
\begin{eqnarray}
    \theta^+_j(t)
       =
         \left\{
  \begin{array}{ll}
  0, & \hbox{$t<0$;} \\
    t, & \hbox{$t\in[0,j]$;} \\
    2j-t, & \hbox{$t\in(j,2j+\sqrt{2}j]$;} \\
    0, & \hbox{$t>2j+\sqrt{2}j$.}
  \end{array}
\right.
\end{eqnarray}

We remark that
\begin{eqnarray}\label{eq theta jian j prop}
    \int_0^{2j+\sqrt{2}j}\theta^-_j(t)dt=0\text{ and }\int_0^{2j+\sqrt{2}j}\int_0^s\theta^-_j(t)dtds=-\frac{1}{3}(3+2\sqrt{2})j^3,
\end{eqnarray}
\begin{eqnarray}\label{eq theta jia j prop}
    \int_0^{2j+\sqrt{2}j}\theta^+_j(t)dt=0\text{ and }\int_0^{2j+\sqrt{2}j}\int_0^s\theta^+_j(t)dtds=\frac{1}{3}(3+2\sqrt{2})j^3,
\end{eqnarray}
and
\begin{eqnarray}\label{eq theta j sup}
    \sup_{s\in[0,2j+\sqrt{2}j]}\Big|\int_0^{s}\theta^-_j(t)dt\Big|
    =
    \sup_{s\in[0,2j+\sqrt{2}j]}\Big|\int_0^{s}\theta^+_j(t)dt\Big|
    =j^2.
\end{eqnarray}

Now we prove \eqref{eq Part 1}.

Since $\mathbb{V}(x,y)<\infty$, for any $\epsilon>0$, there exist $T_0>0$ and $\psi=(\varphi,\varphi^{(1)})=\{(\varphi(t),\varphi^{(1)}(t)),t\in[0,T_0]\}\in {\bf AC}_{T_0}^x$ satisfying
\begin{enumerate}
    \item $\psi(0)=x$, $\psi(T_0)=y$;
    \item letting $h^\sigma(s)=\frac{\varphi^{(2)}(s) - b(\varphi(s),  \varphi^{(1)}(s))}{\si(\varphi(s), \varphi^{(1)}(s))}, s\in[0,T_0]$, we have
    \begin{eqnarray}\label{eq sigma}
    \frac{1}{2}\int_{0}^{T_0}\Big|h^\sigma(s)\Big|^2ds-\epsilon\leq \mathbb{V}(x,y).
    \end{eqnarray}
\end{enumerate}
Let us remark that
\begin{eqnarray}
&&\varphi^{(1)}(t)=x_2+\int_0^t\varphi^{(2)}(s)ds,\ t\in[0,T_0];\label{eq Lemma 3.1 00}\\
&&\varphi(t)=x_1+\int_0^t\varphi^{(1)}(s)ds,\ t\in[0,T_0];\label{eq Lemma 3.1 01}\\
&&\varphi(0)=x_1;\ \varphi(T_0)=y_1;\ \varphi^{(1)}(0)=x_2;\ \varphi^{(1)}(T_0)=y_2.\label{eq Lemma 3.1 02}
\end{eqnarray}

For any $1\geq \delta>0$ and any $\tilde{x}=(\tilde{x}_1,\tilde{x}_2)\in B_\delta(x),\tilde{y}=(\tilde{y}_1,\tilde{y}_2)\in B_\delta(y)$. We define
\begin{eqnarray}\label{eq zeta 00 2}
   \zeta_2(t)
   =
   \left\{
  \begin{array}{ll}
    {\rm sgn}(x_2-\tilde{x}_2), & \hbox{$t\in[0,|x_2-\tilde{x}_2|)$;} \\
    \varphi^{(2)}(t-|x_2-\tilde{x}_2|), & \hbox{$t\in[|x_2-\tilde{x}_2|,T_0+|x_2-\tilde{x}_2|)$;}\\
    {\rm sgn}(\tilde{y}_2-y_2), & \hbox{$t\in[T_0+|x_2-\tilde{x}_2|,T_0+|x_2-\tilde{x}_2|+|\tilde{y}_2-y_2|]$.}
  \end{array}
\right.
\end{eqnarray}

The proof of \eqref{eq Part 1} is divided into three cases: Case 1: $y_2>0$; Case 2: $y_2<0$; Case 3: $y_2=0$.

Case 1: $y_2>0$.

Without loss of generality, we choose $0<\delta\leq 1$ small enough such that $\tilde{y}_2\geq \frac{y_2}{2}$. Set
$$y_3:=\tilde{x}_1+\frac{1}{2}|x_2-\tilde{x}_2|(x_2+\tilde{x}_2)+y_1-x_1+\frac{1}{2}|y_2-\tilde{y}_2|(y_2+\tilde{y}_2).$$

In the following, we divide the case into two subcases.

Subcase 1: $\tilde{y}_1\geq y_3$.

For any $t\in(T_0+|x_2-\tilde{x}_2|+|\tilde{y}_2-y_2|,T_0+|x_2-\tilde{x}_2|+|\tilde{y}_2-y_2|+\frac{\tilde{y}_1- y_3}{\tilde{y}_2}]$, set
\begin{eqnarray*}
    \zeta_2(t)=0.
\end{eqnarray*}
Let
\begin{eqnarray*}
    \zeta_1(t)=\tilde{x}_2+\int_0^t\zeta_2(s)ds;\
    \zeta(t)=\tilde{x}_1+\int_0^t\zeta_1(s)ds.
\end{eqnarray*}
We have
\begin{eqnarray}
    &&\zeta_1(0)=\tilde{x}_2;\ \zeta(0)=\tilde{x}_1;\label{eq lemma 3.1 1001}\\
    &&\zeta_1(|x_2-\tilde{x}_2|)=x_2;\ \ \ \ \
    \zeta(|x_2-\tilde{x}_2|)=\tilde{x}_1+\frac{1}{2}|x_2-\tilde{x}_2|(x_2+\tilde{x}_2);\nonumber\\
    &&\zeta_1(t)=\varphi^{(1)}(t-|x_2-\tilde{x}_2|),\ t\in [|x_2-\tilde{x}_2|,|x_2-\tilde{x}_2|+T_0];\nonumber\\
    &&\zeta(t)=\zeta(|x_2-\tilde{x}_2|)+\varphi(t-|x_2-\tilde{x}_2|)-x_1,\ t\in [|x_2-\tilde{x}_2|,|x_2-\tilde{x}_2|+T_0];\nonumber\\
    &&\zeta_1(T_0+|x_2-\tilde{x}_2|)=\varphi^{(1)}(T_0)=y_2;\nonumber\\
    &&\zeta(T_0+|x_2-\tilde{x}_2|)=\zeta(|x_2-\tilde{x}_2|)+\varphi(T_0)-x_1=\zeta(|x_2-\tilde{x}_2|)+y_1-x_1;\nonumber\\
    &&\zeta_1(T_0+|x_2-\tilde{x}_2|+|\tilde{y}_2-y_2|)=\tilde{y}_2;\nonumber\\
    &&\zeta(T_0+|x_2-\tilde{x}_2|+|\tilde{y}_2-y_2|)=y_3;\nonumber\\
    &&\zeta_1(T_0+|x_2-\tilde{x}_2|+|\tilde{y}_2-y_2|+\frac{\tilde{y}_1- y_3}{\tilde{y}_2})=\tilde{y}_2;\label{eq lemma 3.1 1002}\\
    &&\zeta(T_0+|x_2-\tilde{x}_2|+|\tilde{y}_2-y_2|+\frac{\tilde{y}_1- y_3}{\tilde{y}_2})=\tilde{y}_1.\label{eq lemma 3.1 1003}
\end{eqnarray}
Let
\begin{eqnarray}\label{eq lemma 3.1 1004}
    h(s)
    =
    \frac{\zeta_2(s) - b(\zeta(s),  \zeta_1(s))}{\si(\zeta(s), \zeta_1(s))}.
\end{eqnarray}
Then
\begin{eqnarray*}
    &&\left|\int_0^{T_0+|x_2-\tilde{x}_2|+|\tilde{y}_2-y_2|+\frac{\tilde{y}_1- y_3}{\tilde{y}_2}}|h(s)|^2ds-\int_0^{T_0}|h^\sigma(s)|^2ds\right|\\
    &\leq&
    \int_0^{|x_2-\tilde{x}_2|}|h(s)|^2ds
    +
    \Big|\int_{|x_2-\tilde{x}_2|}^{T_0+|x_2-\tilde{x}_2|}|h(s)|^2ds-\int_0^{T_0}|h^\sigma(s)|^2ds\Big|\\
    &&+
    \int_{T_0+|x_2-\tilde{x}_2|}^{T_0+|x_2-\tilde{x}_2|+|y_2-\tilde{y}_2|}|h(s)|^2ds
    +
    \int_{T_0+|x_2-\tilde{x}_2|+|y_2-\tilde{y}_2|}^{{T_0+|x_2-\tilde{x}_2|+|y_2-\tilde{y}_2|}+\frac{\tilde{y}_1- y_3}{\tilde{y}_2}}|h(s)|^2ds\\
    &\leq&
      \delta\Big(\frac{1+\max_{\{|\xi_1|+|\xi_2|\leq |x_1|+3|x_2|+3\}}|b(\xi_1,\xi_2)|}{\min_{\{|\xi_1|+|\xi_2|\leq |x_1|+3|x_2|+3\}}|\sigma(\xi_1,\xi_2)|}\Big)^2\\
      &&+
      \Big|\int_{|x_2-\tilde{x}_2|}^{T_0+|x_2-\tilde{x}_2|}|h(s)|^2ds-\int_0^{T_0}|h^\sigma(s)|^2ds\Big|\\
      &&+
      \delta\Big(\frac{1+\max_{\{|\xi_1|+|\xi_2|\leq 2|x_1|+|x_2|+|y_1|+2|y_2|+4\}}|b(\xi_1,\xi_2)|}{\min_{\{|\xi_1|+|\xi_2|\leq 2|x_1|+|x_2|+|y_1|+2|y_2|+4\}}|\sigma(\xi_1,\xi_2)|}\Big)^2\\
      &&+
      2\delta\frac{3+|x_2|+|y_2|}{|y_2|}\Big(\frac{\max_{\{|\xi_1|+|\xi_2|\leq 2|x_1|+|x_2|+|y_1|+|y_2|+3\}}|b(\xi_1,\xi_2)|}{\min_{\{|\xi_1|+|\xi_2|\leq 2|x_1|+|x_2|+|y_1|+|y_2|+3\}}|\sigma(\xi_1,\xi_2)|}\Big)^2.
\end{eqnarray*}
Notice that
\begin{eqnarray*}
    &&\int_{|x_2-\tilde{x}_2|}^{T_0+|x_2-\tilde{x}_2|}|h(s)|^2ds\\
    &=&
    \int_{0}^{T_0}\Big|\frac{\varphi^{(2)}(s) - b(\zeta(|x_2-\tilde{x}_2|)+\varphi(s)-x_1,  \varphi^{(1)}(s))}{\si(\zeta(|x_2-\tilde{x}_2|)+\varphi(s)-x_1,  \varphi^{(1)}(s))}\Big|^2ds.
\end{eqnarray*}
Keeping in mind the assumptions that
the coefficients $b$ and $\si$ are real-valued, locally Lipschitz continuous functions on $\RR^2$ and $\si(x) \ne 0$ for all $x \in \RR^2$, it is not difficult to prove that
there exists a positive constant $C_{\delta,T_0,\varphi,x,y}$, independent of
$\Tilde{x}$ and $\Tilde{y}$, such that
\begin{eqnarray*}
    \Big|\int_{|x_2-\tilde{x}_2|}^{T_0+|x_2-\tilde{x}_2|}|h(s)|^2ds-\int_0^{T_0}|h^\sigma(s)|^2ds\Big|
    \leq
    C_{\delta,T_0,\varphi,x,y}.
\end{eqnarray*}
Here $C_{\delta,T_0,\varphi,x,y}$ satisfies
\begin{eqnarray*}
    \lim_{\delta\rightarrow0}C_{\delta,T_0,\varphi,x,y}=0.
\end{eqnarray*}

We can conclude that there exists a positive constant $D_{\delta,T_0,\varphi,x,y}$, independent of
$\Tilde{x}$ and $\Tilde{y}$, such that
\begin{eqnarray}\label{eq h 1.1}
    \Big|\int_0^{T_0+|x_2-\tilde{x}_2|+|\tilde{y}_2-y_2|+\frac{\tilde{y}_1- y_3}{\tilde{y}_2}}|h(s)|^2ds-\int_0^{T_0}|h^\sigma(s)|^2ds\Big|
    \leq D_{\delta,T_0,\varphi,x,y},
    \end{eqnarray}
    here
\begin{eqnarray}\label{eq h 1.2}
    \lim_{\delta\rightarrow0}D_{\delta,T_0,\varphi,x,y}=0.
\end{eqnarray}

By \eqref{eq sigma}, (\ref{eq lemma 3.1 1001})--(\ref{eq h 1.1}), the definition of $\mathbb{V}$ implies that
\begin{eqnarray}\label{eq subcase 1}
\mathbb{V}(\tilde{x},\tilde{y})
&\leq&
I_{\tilde{x}}^{T_0+|x_2-\tilde{x}_2|+|\tilde{y}_2-y_2|+\frac{\tilde{y}_1- y_3}{\tilde{y}_2}}((\zeta,\zeta_1))\nonumber\\
&=&
\frac{1}{2}\int_0^{T_0+|x_2-\tilde{x}_2|+|\tilde{y}_2-y_2|+\frac{\tilde{y}_1- y_3}{\tilde{y}_2}}|h(s)|^2ds\nonumber\\
&\leq&
\frac{1}{2}\int_0^{T_0}|h^\sigma(s)|^2ds+D_{\delta,T_0,\varphi,x,y}\nonumber\\
&\leq&\mathbb{V}(x,y)+\epsilon+D_{\delta,T_0,\varphi,x,y}.
\end{eqnarray}

Subcase 2: $\tilde{y}_1< y_3$.
For $t\in[0,T_0+|x_2-\tilde{x}_2|+|\tilde{y}_2-y_2|]$, define
\begin{eqnarray}
    \widetilde{\zeta}_2(t)=\zeta_2(t)+\theta^-_{j_1}(t-(T_0+|x_2-\tilde{x}_2|)+2j_1+\sqrt{2}j_1).
\end{eqnarray}
Here $j_1$ satisfies
$$
\frac{1}{3}(3+2\sqrt{2})j_1^3
  =
y_3-\tilde{y}_1.
$$
Set
\begin{eqnarray*} \widetilde{\zeta}_1(t)=\tilde{x}_2+\int_0^t\widetilde{\zeta}_2(s)ds;\
    \widetilde{\zeta}(t)=\tilde{x}_1+\int_0^t\widetilde{\zeta}_1(s)ds.
\end{eqnarray*}
We have
\begin{eqnarray*}
    &&\widetilde{\zeta}_1(0)=\tilde{x}_2;\ \widetilde{\zeta}(0)=\tilde{x}_1;\\
    &&\widetilde{\zeta}_1(|x_2-\tilde{x}_2|)=x_2;\ \ \ \ \
    \widetilde{\zeta}(|x_2-\tilde{x}_2|)=\tilde{x}_1+\frac{1}{2}|x_2-\tilde{x}_2|(x_2+\tilde{x}_2);\\
    &&\widetilde{\zeta}_1(t)=\varphi^{(1)}(t-|x_2-\tilde{x}_2|),\ t\in [|x_2-\tilde{x}_2|,T_0+|x_2-\tilde{x}_2|-2j_1-\sqrt{2}j_1];\\
    &&\widetilde{\zeta}(t)=\widetilde{\zeta}(|x_2-\tilde{x}_2|)+\varphi(t-|x_2-\tilde{x}_2|)-x_1,\ t\in [|x_2-\tilde{x}_2|,T_0+|x_2-\tilde{x}_2|-2j_1-\sqrt{2}j_1].
    \end{eqnarray*}

    For $t\in[T_0+|x_2-\tilde{x}_2|-2j_1-\sqrt{2}j_1,T_0+|x_2-\tilde{x}_2|]$,
    \begin{eqnarray*}
    &&\widetilde{\zeta}_1(t)=\varphi^{(1)}(t-|x_2-\tilde{x}_2|)+\int_0^t\theta^-_{j_1}(s-(T_0+|x_2-\tilde{x}_2|)+2j_1+\sqrt{2}j_1)ds;\\
    &&\widetilde{\zeta}(t)=\widetilde{\zeta}(|x_2-\tilde{x}_2|)+\varphi(t-|x_2-\tilde{x}_2|)-x_1
    +\int_0^t\int_0^s\theta^-_{j_1}(l-(T_0+|x_2-\tilde{x}_2|)+2j_1+\sqrt{2}j_1)dlds.
    \end{eqnarray*}
By (\ref{eq theta jian j prop}), we have
    \begin{eqnarray*}
    &&\widetilde{\zeta}_1(T_0+|x_2-\tilde{x}_2|)=y_2;\\
    &&\widetilde{\zeta}(T_0+|x_2-\tilde{x}_2|)=\widetilde{\zeta}(|x_2-\tilde{x}_2|)+y_1-x_1
    +\tilde{y}_1-y_3.
    \end{eqnarray*}
    Hence
    \begin{eqnarray*}
    &&\widetilde{\zeta}_1(T_0+|x_2-\tilde{x}_2|+|\tilde{y}_2-y_2|)=\tilde{y}_2;\\
    &&\widetilde{\zeta}(T_0+|x_2-\tilde{x}_2|+|\tilde{y}_2-y_2|)=\tilde{y}_1.
\end{eqnarray*}

Let
\begin{eqnarray}\label{eq control 000}
    \widetilde{h}(s)
    =
    \frac{\widetilde{\zeta}_2(s) - b(\widetilde{\zeta}(s),  \widetilde{\zeta}_1(s))}{\si(\widetilde{\zeta}(s), \widetilde{\zeta}_1(s))}.
\end{eqnarray}
Using similar arguments as proving \eqref{eq h 1.1}, we can get
 that there exists a positive constant $\widetilde{D}_{\delta,T_0,\varphi,x,y}$, independent of
$\Tilde{x}$ and $\Tilde{y}$, such that
\begin{eqnarray}\label{eq h 2.1}
    \Big|\int_0^{T_0+|x_2-\tilde{x}_2|+|\tilde{y}_2-y_2|}|\widetilde{h}(s)|^2 ds-\int_0^{T_0}|h^\sigma(s)|^2 ds\Big|
    \leq \widetilde{D}_{\delta,T_0,\varphi,x,y},
    \end{eqnarray}
    here
\begin{eqnarray}\label{eq h 2.2} \lim_{\delta\rightarrow0}\widetilde{D}_{\delta,T_0,\varphi,x,y}=0.
\end{eqnarray}
Similar to the proof of  \eqref{eq subcase 1}, we have
\begin{eqnarray}\label{eq subcase 2}
\mathbb{V}(\tilde{x},\tilde{y})
&\leq&
I_{\tilde{x}}^{T_0+|x_2-\tilde{x}_2|+|\tilde{y}_2-y_2|}((\widetilde{\zeta},\widetilde{\zeta}_1))\nonumber\\
&=&
\frac{1}{2}\int_0^{T_0+|x_2-\tilde{x}_2|+|\tilde{y}_2-y_2|}|\widetilde{h}(s)|^2 ds\nonumber\\
&\leq&
\frac{1}{2}\int_0^{T_0}|h^\sigma(s)|^2ds+\widetilde{D}_{\delta,T_0,\varphi,x,y}\nonumber\\
&\leq&\mathbb{V}(x,y)+\epsilon+\widetilde{D}_{\delta,T_0,\varphi,x,y}.
\end{eqnarray}

The arbitrary of $\epsilon$, \eqref{eq h 1.2}, \eqref{eq subcase 1}, \eqref{eq h 2.2}, and \eqref{eq subcase 2} imply that if $y_2>0$ then \eqref{eq Part 1} holds.

Case 2: Using similar arguments as proving Case 1,  if $y_2<0$ then \eqref{eq Part 1} holds.

Now we consider Case 3: $y_2=0$.

 Set
\begin{eqnarray*}
    y_4:=
    \tilde{x}_1
    +
    \frac{1}{2}|x_2-\tilde{x}_2|(x_2+\tilde{x}_2)
    +
    y_1-x_1
    +
    \frac{1}{2}|\tilde{y}_2|^2{\rm sgn}(\tilde{y}_2),
\end{eqnarray*}
and ${\rm sgn}_1={\rm sgn}(\tilde{y}_1-y_4)$.

Define
{\small
\begin{eqnarray}
   \bar{\zeta}_2(t)
   =
   \left\{
  \begin{array}{ll}
    {\rm sgn}(x_2-\tilde{x}_2), & \hbox{$t\in[0,|x_2-\tilde{x}_2|)$;} \\
    \varphi^{(2)}(t-|x_2-\tilde{x}_2|), & \hbox{$t\in[|x_2-\tilde{x}_2|,T_0+|x_2-\tilde{x}_2|)$;}\\
    \theta^{{\rm sgn}_1}_{j_2}(t-|x_2-\tilde{x}_2|), & \hbox{$t\in[T_0+|x_2-\tilde{x}_2|,T_0+|x_2-\tilde{x}_2|+2j_2+\sqrt{2}j_2)$;}\\
    {\rm sgn}(\tilde{y}_2-y_2), & \hbox{$t\in[T_0+|x_2-\tilde{x}_2|+2j_2+\sqrt{2}j_2,T_0+|x_2-\tilde{x}_2|+2j_2+\sqrt{2}j_2+|\tilde{y}_2-y_2|]$.}
  \end{array}
\right.
\end{eqnarray}
}
Here $j_2$ satisfies
\begin{eqnarray}
   \frac{1}{3}(3+2\sqrt{2})j_2^3
=
|\tilde{y}_1-y_4|.
\end{eqnarray}

Set
\begin{eqnarray*}
    \bar{\zeta}_1(t)=\tilde{x}_2+\int_0^t\bar{\zeta}_2(s)ds;\
    \bar{\zeta}(t)=\tilde{x}_1+\int_0^t\bar{\zeta}_1(s)ds.
\end{eqnarray*}
Using similar arguments as proving Case 1, we have
\begin{eqnarray*}
    &&\bar{\zeta}_1(0)=\tilde{x}_2;\ \bar{\zeta}(0)=\tilde{x}_1;\\
    &&\bar{\zeta}_1(T_0+|x_2-\tilde{x}_2|+2j_2+\sqrt{2}j_2+|\tilde{y}_2-y_2|)=\tilde{y}_2;\\
    &&\bar{\zeta}(T_0+|x_2-\tilde{x}_2|+2j_2+\sqrt{2}j_2+|\tilde{y}_2-y_2|)=\tilde{y}_1.
\end{eqnarray*}
Let
\begin{eqnarray}\label{eq control 001}
    \bar{h}(s)
    =
    \frac{\bar{\zeta}_2(s) - b(\bar{\zeta}(s),  \bar{\zeta}_1(s))}{\si(\bar{\zeta}(s),\bar{\zeta}_1(s))}, \ t\in[0,T_0+|x_2-\tilde{x}_2|+2j_2+\sqrt{2}j_2+|\tilde{y}_2-y_2|].
\end{eqnarray}
Again using similar arguments as in the proof of Case 1, for Case 3,  we can prove \eqref{eq Part 1}, completing the proof of \eqref{eq Part 1}.

The proof of (ii) is complete.

Now we prove (iii). Fix $(x_0,y_0)\in \RR^2\times \RR^2$ and $\delta, \eta>0$, and suppose that $\mathbb{V}(x,y_0)<\eta$ for all $x\in \overline{B}_{\delta}(x_0)$.
 The proof of (ii) implies that for any $z\in \overline{B}_{\delta}(x_0)$, there exist constants $\delta_z, T_z\in(0,\infty)$ such that, for any $k\in B_{\delta_z}(z)$, there exists a $T^k\in(0,T_z]$ and $\psi^k\in C_{T^k}$ with $\psi^k(0)=k$ and $\psi^k(T^k)=y_0$ such that $I^{T^k}_k(\psi^k)<\eta$. Applying the Heine-Borel Covering theorem, it is not difficult to see that (iii) holds.


The proof of Lemma \ref{PropQ 0808} is complete.
\end{proof}
\vskip 3mm
For $x\in \mathbb{R}^d$, let $\Psi_t(x)=\Psi(t,x)$ denote the solution of the system $\dot{Y}=b(Y)$ with $Y(0)=x$. We now recall the definitions of $\alpha$, $\omega$ limit sets:
$$
\alpha(x)=\{p\in \mathbb{R}^d:  \mbox{there exists a sequence} \,\, t_n \,\,\mbox{ such that}\,\, t_n\rightarrow -\infty\,\, \mbox{ and that}\,\,  \Psi(t_n,x)\rightarrow p\};
$$
$$
\omega(x)=\{p\in \mathbb{R}^d:  \mbox{there exists a sequence} \,\, t_n \,\, \mbox{ such that}\,\, t_n\rightarrow +\infty \,\, \mbox{ and that}\,\,  \Psi(t_n,x)\rightarrow p\}.
$$
The Birkhoff center, $B(\Psi)$, of $\Psi$ is defined to be the closure of all periodic orbits of $\Psi$. By \cite[Theorem 3.1]{Chen2020}, ${\rm supp(}\mu)\subset B(\Psi)$ for any limiting measure $\mu$ of the invariant measures of the system (\ref{11}).
The following concepts are adopted from \cite [Chapter 6]{FW}. We define an equivalent relation $\sim$ between points of $\mathbb{R}^d$:
$$
x\sim y\Leftrightarrow \mathbb{V}(x,y)=0=\mathbb{V}(y,x).
$$
A set $K$ is called an {\it equivalent class} if $\mathbb{V}(x,y)=0$ for any $x,y\in K$.
We note that if the quasipotential $\mathbb{V}$ is continuous at any point of a limit set of the system $\dot{x}=b(x)$ then it is an equivalent class. The precise meaning  of the assumption (${\bf A}_4$) in Section 1 is the following:\\
 There exists a finite number of compact equivalent classes $K_1, K_2, \cdots, K_l$ such that\\
 (1) if $x\in K_i$ and $y\notin K_i$, then $x\nsim y$;\\
 (2) every $\omega-$ limit set(also $\alpha-$ limit set) of the dynamical  system $\dot{x}=b(x)$ is contained in one of the $K_i$.
 \vskip 0.3cm
 The condition (1) means that each equivalent class $K_i$ is maximal with respect to the  relation $\sim$.

 Let $L:=\{K_1, K_2, \cdots, K_l\}$. For simplicity, we denote  $L=\{1, 2, \cdots, l\}$. Let $W\subset L$. A graph consisting of arrows $m\rightarrow n (m\in L\backslash W, n\in L, n\neq m)$
 is called a {\it $W$-graph} if it satisfies the following
conditions:\\
(1) every point $m\in L\backslash  W$ is the initial point of exactly one arrow;\\
(2) there are no closed cycles in the graph.
\vskip 0.3cm
We denote by $G(W)$ the set of $W$-graphs. For each $i\in L$, define
$$W(K_i) := {\rm min}_{g\in G(i)}\sum_{(m\rightarrow n)\in g}\mathbb{V}(K_m,K_n).$$
Suppose that (${\bf A}_3$) and (\ref{E}) hold. Then  it follows from Khasminskii \cite [Chapter 4]{ KhasminskiiB} that the system (\ref{11}) admits a unique invariant measure $\mu^{\varepsilon}$ for $0<\vare \le \vare_0$. Moreover, the condition
\begin{equation}\label{DissiP}
\langle b(x),\nabla V\rangle \le -\gamma,\ {\rm for}\ |x|\gg 1,
\end{equation}
implies that the system $\dot{x}=b(x)$ is dissipative. Furthermore, if Assumptions {\rm \ref{ass1}}, {\rm\ref{ass2}} and (${\bf A}_4$) are satisfied, then ULDP,  the Lemmas on  estimating the
escape time from a domain in \cite [Chapter 6]{FW} and Khasminskii's formula (4.14) in \cite [Theorem 3.5, p.75]{KhasminskiiB} hold. Following the  proofs of \cite [Theorem 4.1, p.166]{FW}, \cite [Theorem 4.2, p.167]{FW}, we  deduce the following:
\begin{thm}\label{FW}
 Suppose that Assumptions {\rm \ref{ass1}},  {\rm\ref{ass2}}, {\rm(}${\bf A}_3${\rm)}, {\rm(}${\bf A}_4${\rm)} and \eqref{E} are satisfied. Then for any $\gamma>0$ there exist $0<\vare^*<\vare_0$ and $\rho>0$ {\rm(}which can be chosen arbitrarily small{\rm)} such that when $0<\vare<\vare_*$ and $i\in L$,
 $$
 \exp\{-\vare^{-1}\left(W(K_i)-{\rm min}_{j\in L}W(K_j)+\gamma\right)\}\le\mu^{\vare}(B(K_i,\rho))\le \exp\{-\vare^{-1}\left(W(K_i)-{\rm min}_{j\in L}W(K_j)-\gamma\right)\},
 $$
 where $B(K_i,\rho)$ denotes the $\rho$-neighborhood of $K_i$.
 Let
 $$\nu={\rm min}_{j\in L}\{W(K_j)\}\ {\rm and}\ L_0:=\{i_1,\ i_2,\cdots,\ i_k\}=\{i\in L:W(K_i)=\nu\}. $$
 If $\mu^{\vare_k}$ converges weakly to $\mu$ as $\vare_k\rightarrow 0$, then the support of $\mu$, ${\rm supp(}\mu)$,  is contained in  $\bigcup_{j=1}^kK_{i_j}$. In particular, if $L_0=\{i_0\}$ and there exists only one normalized invariant measure $\mu_0$ of the system $\dot{x}=b(x)$ whose support is contained in $K_{i_0}$, then $\mu^{\vare}$ converges weakly to $\mu_0$ as $\vare\rightarrow 0$.
 \end{thm}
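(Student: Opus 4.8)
The statement is essentially a transcription of Freidlin--Wentzell's asymptotics theorem (\cite[Theorems 4.1 and 4.2, Chapter 6]{FW}) to the present setting, where the only novelty is that the coefficients $b,\sigma$ are unbounded rather than bounded. The plan is therefore to follow the Freidlin--Wentzell argument verbatim, pausing only at the two points where boundedness of the coefficients was used in \cite{FW}, and to replace those with the tools we have already assembled: the ULDP of Theorem \ref{thmULDP} (valid under Assumptions \ref{ass1}--\ref{ass2}, which covers the unbounded case), the dissipativity coming from (${\bf A}_3$)/\eqref{DissiP}, the continuity of the quasipotential on compacts (Lemma \ref{Vcon}, under \eqref{E}), and Khasminskii's ergodic formula (\cite[Theorem 3.5, p.75]{KhasminskiiB}) giving $\mu^\vare$ as the normalized occupation measure of an embedded Markov chain on small spheres around the $K_i$'s.

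\textbf{Step 1 (Reduction to a finite Markov chain on the $K_i$'s).} Fix $\rho>0$ small and $\rho'\in(0,\rho)$. Using dissipativity of $\dot x=b(x)$ together with (${\bf A}_3$), I would first establish, exactly as in \cite[Chapter 6]{FW} but with the Lyapunov function $V$ of Assumption \ref{ass2} replacing the quadratic one, that the process $X^{\vare,x}$ does not escape to infinity on the relevant time scales: more precisely, that with overwhelming probability it enters a fixed large ball $B_{R_0}(O)\supset \bigcup_i B(K_i,\rho)$ in bounded time and, on the exponential time scales $e^{c/\vare}$, returns to $\bigcup_i B(K_i,\rho')$ without ever leaving $B_{R_0}(O)$. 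This is where unboundedness could bite; the remedy is the supermartingale-type estimate for $e^{-\eta\int_0^t|h|^2}V(X^\vare(t))$ already derived in the proof of Theorem \ref{thmULDP} (inequality \eqref{4-6}) combined with $\langle b,\nabla V\rangle\le-\gamma$ from \eqref{DissiP}, which yields exponential tail bounds for $\sup_{t\le e^{c/\vare}}V(X^\vare(t))$ uniform in small $\vare$. Once the dynamics are confined to the compact set $B_{R_0}(O)$, all coefficients are automatically bounded there, and from this point on the original Freidlin--Wentzell machinery applies without change.

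\textbf{Step 2 (Escape-time and transition estimates; the graph formula).} On the compact set $B_{R_0}(O)$ I would invoke the ULDP of Theorem \ref{thmULDP} (together with the continuity of $\mathbb V$ on compacts from Lemma \ref{Vcon}, which uses \eqref{E}) to reproduce the standard lemmas of \cite[Chapter 6, §§2--4]{FW}: exponential estimates for the time to pass from a $\rho'$-neighborhood of $K_i$ to a $\rho$-neighborhood of $K_j$, with rate $\mathbb V(K_i,K_j)$ up to $o(1/\vare)$ errors as $\rho,\rho'\to0$. Consider the Markov chain $Z_n$ obtained by observing $X^\vare$ at the successive times it hits $\partial B(K_i,\rho')$ after having visited $\partial B(K_i,\rho)$; its one-step transition probabilities satisfy $p_{ij}=\exp(-\vare^{-1}(\mathbb V(K_i,K_j)+o(1)))$. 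The invariant distribution of this finite chain is given, via the Markov chain tree theorem, by the $W$-graph sums $W(K_i)=\min_{g\in G(i)}\sum_{(m\to n)\in g}\mathbb V(K_m,K_n)$, and Khasminskii's formula expresses $\mu^\vare(B(K_i,\rho))$ as a ratio of mean return times weighted by this invariant distribution. Taking logarithms and letting $\vare\to0$ (then $\rho,\rho'\to0$) gives the two-sided bound
\[
\exp\{-\vare^{-1}(W(K_i)-\min_{j}W(K_j)+\gamma)\}\le\mu^\vare(B(K_i,\rho))\le\exp\{-\vare^{-1}(W(K_i)-\min_{j}W(K_j)-\gamma)\},
\]
which is the first assertion.

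\textbf{Step 3 (Identification of the support of the limit measure).} From the upper bound in Step 2, for every $i\notin L_0$ we have $W(K_i)-\nu>0$, hence $\mu^\vare(B(K_i,\rho))\to0$; since moreover $\mu^\vare(\RR^d\setminus B_{R_0}(O))\to0$ (tightness, from Step 1) and $\mu^\vare(B_{R_0}(O)\setminus\bigcup_iB(K_i,\rho))\to0$ (because outside the $K_i$'s the deterministic flow carries mass into $\bigcup_i B(K_i,\rho)$ in bounded time, using (${\bf A}_4$) and \cite[Theorem 3.1]{Chen2020}), any weak limit $\mu$ is supported in $\bigcup_{j=1}^k K_{i_j}$. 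For the final refinement: if $L_0=\{i_0\}$ and $\dot x=b(x)$ has a unique normalized invariant measure $\mu_0$ supported in $K_{i_0}$, then $\mathrm{supp}(\mu)\subseteq K_{i_0}$; since $\mu$ is itself invariant for the deterministic flow (any weak limit of invariant measures of \eqref{11} is — this again follows from \cite[Theorem 3.1]{Chen2020} or a direct argument letting $\vare\to0$ in the stationarity identity), uniqueness forces $\mu=\mu_0$, and as every subsequential limit equals $\mu_0$ we conclude $\mu^\vare\Rightarrow\mu_0$.

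\textbf{Main obstacle.} The only genuinely non-routine point is Step 1: in \cite{FW} the confinement of the process to a large ball on exponential time scales was essentially automatic from bounded coefficients plus \eqref{FDissiP}, whereas here one must control the unbounded drift and the polynomially growing diffusion. The key is that Assumption \ref{ass2} was designed precisely so that $e^{-\eta\int_0^t|h|^2}V(X^\vare)$ is an (approximate) supermartingale uniformly in small $\vare$ — exactly the estimate \eqref{4-6} — and combined with \eqref{DissiP} this gives the needed uniform exponential non-escape bounds. Once inside a fixed compact set everything reduces to the classical theory, so the proof amounts to carefully quoting \cite[Chapter 6]{FW} with this one substitution, which is why we do not reproduce the lengthy estimates here.
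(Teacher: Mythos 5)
Your proposal takes essentially the same approach as the paper: the paper's proof of Theorem \ref{FW} is given in a single sentence preceding the statement, noting that under Assumptions \ref{ass1}, \ref{ass2}, (${\bf A}_3$), (${\bf A}_4$) the ULDP of Theorem \ref{thmULDP}, the escape-time lemmas of \cite[Chapter 6]{FW}, and Khasminskii's formula all hold, so that the proofs of \cite[Theorems 4.1--4.2, Chapter 6]{FW} carry over verbatim. Your Steps 1--3 are a correct expansion of that argument, and your diagnosis that the only real novelty (relative to \cite{FW}) is the exponential-time confinement estimate, handled via the supermartingale bound \eqref{4-6} and the dissipativity \eqref{DissiP}, matches the paper's intent.
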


In view of Theorem \ref{thmULDP}, following the proofs in the paper \cite {Hwang} by Hwang and Sheu  we obtain the following result which is applicable to unbounded coefficients.

\begin{thm}\label{HW}
 Suppose that Assumptions {\rm \ref{ass1}}, {\rm\ref{ass2}}, {\rm(}${\bf A}_3${\rm)}, {\rm(}${\bf A}_4${\rm)} and \eqref{E} are satisfied. Then for any $\alpha>0, \exists\ \delta>0$
 such that for any compact set $F$ there is $0<\vare^*_0<\vare^*$, the following holds
 $$\left |\EE_x[f(X^{\vare}(T))]-\int f(y)\mu^{\vare}(dy) \right |\le \|f\|_{\infty}\exp(-\vare^{-1}\delta)$$
 where
 $$x\in F,\quad \quad f\in C_b(\RR^d), \quad \quad 0<\vare \le \vare_0^*, \quad \quad T=\exp\left(\vare^{-1}(\Lambda+\alpha)\right)\ {\rm with}$$
 $$\Lambda=\nu- {\rm min}\{\mathbb{V}(g):g\in G(\{i,j\}), i, j\in L, i\neq j\}.$$
 \end{thm}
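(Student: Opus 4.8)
The plan is to adapt the argument of Hwang and Sheu \cite{Hwang}, using Theorem \ref{thmULDP} in place of the classical uniform large deviation estimate that is available only for bounded coefficients. Under Assumptions \ref{ass1}, \ref{ass2}, (${\bf A}_3$), (${\bf A}_4$) and \eqref{E}, Khasminskii's criterion (\cite[Chapter 4]{KhasminskiiB}) provides the unique invariant measure $\mu^\vare$, Theorem \ref{thmULDP} provides the ULDP uniformly over compact sets of initial data, and, exactly as in Theorem \ref{FW}, the ULDP combined with the escape-time lemmas of \cite[Chapter 6]{FW} and Khasminskii's formula are at our disposal. These are the only probabilistic ingredients used in \cite{Hwang}, so the whole scheme carries over once uniform-in-$\vare$ non-explosion and return estimates for the unbounded drift are secured through the Lyapunov function $V$.

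\emph{Step 1 (reduction to a two-point estimate).} By stationarity, $\int f(y)\mu^\vare(dy)=\int \EE_y[f(X^{\vare}(T))]\,\mu^\vare(dy)$, hence for any $R_0>0$
\begin{equation*}
\Big|\EE_x[f(X^{\vare}(T))]-\int f(y)\mu^\vare(dy)\Big|\le \sup_{y\in \overline{B}_{R_0}(O)}\big|\EE_x[f(X^{\vare}(T))]-\EE_y[f(X^{\vare}(T))]\big|+2\|f\|_\infty\,\mu^\vare\big(\{|y|>R_0\}\big).
\end{equation*}
By (${\bf A}_3$) and \eqref{22} the Lyapunov function controls the tails of $\mu^\vare$, so the last term is $\le\|f\|_\infty e^{-c/\vare}$ once $R_0$ is fixed large; it therefore suffices to bound the oscillation of $x\mapsto \EE_x[f(X^{\vare}(T))]$ over a fixed compact set $F'$ containing $F$, $\overline{B}_{R_0}(O)$ and all the $K_i$.

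\emph{Step 2 (coupling through the deepest class).} The engine is the estimate, following \cite{Hwang}, that this oscillation is exponentially small; I would establish it by a coupling through a deepest equivalent class (this is the probabilistic content of the relaxation-time, or second-eigenvalue, bound in \cite{Hwang}). Start two independent copies $X^{\vare,x}$ and $X^{\vare,y}$ with $x,y\in F'$. Theorem \ref{thmULDP} and the escape-time estimates of \cite[Chapter 6]{FW} give: (a) each copy enters a fixed $\rho$-neighborhood of $\bigcup_i K_i$ within a deterministic time $T_1$ independent of $\vare$ with probability $\ge 1-e^{-c/\vare}$; (b) the transition probabilities between the $B(K_i,\rho)$ obey the Freidlin--Wentzell asymptotics governed by the matrix $(\mathbb{V}(K_i,K_j))$ and the $W$-graphs, so that, within a time of order $e^{\Lambda/\vare}$ up to sub-exponential corrections, each copy reaches $\bigcup_{j\in L_0}B(K_j,\rho)$; (c) on a $\rho$-neighborhood of a class $K_{i_0}$ with $W(K_{i_0})=\nu$ the non-degeneracy \eqref{E} yields a Doeblin minorization $P^\vare_{t_0}(z,\cdot)\ge e^{-\alpha/(2\vare)}\,m_0(\cdot)$ for a fixed probability measure $m_0$, which lets the two copies be coupled at a common attempt. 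Since by (b) the horizon $T=e^{\vare^{-1}(\Lambda+\alpha)}$ accommodates $\gtrsim e^{\alpha/(2\vare)}$ essentially independent attempts, a direct count gives that coupling fails by time $T$ with probability at most $e^{-\delta/\vare}$ for a suitable $\delta=\delta(\alpha)>0$. Consequently $\sup_{x,y\in F'}\big|\EE_x[f(X^{\vare}(T))]-\EE_y[f(X^{\vare}(T))]\big|\le 2\|f\|_\infty e^{-\delta/\vare}$, and combining with Step 1 and shrinking $\delta$ yields the claim for a suitable $\vare_0^*\in(0,\vare^*)$.

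The main obstacle — which is exactly what is new compared with \cite{Hwang}, where $b$ and $\sigma$ are bounded — is to keep the diffusion under control uniformly in $\vare$ over the exponentially long horizon $T=e^{\vare^{-1}(\Lambda+\alpha)}$: one must ensure that non-explosion and the return to $F'$ hold with probability $1-e^{-c/\vare}$ throughout all the $O(e^{\Lambda/\vare})$ renewal cycles, so that the many small error terms do not accumulate. This is precisely where the Lyapunov bounds \eqref{22}--\eqref{23} and the uniform-over-compacts formulation of Theorem \ref{thmULDP} take over the role played by boundedness in \cite{Hwang}: the supermartingale built from $V$ (as in the proof of Theorem \ref{thmULDP}) bounds the tails of the excursions uniformly, while the uniform ULDP supplies transition estimates that do not depend on the precise entrance point inside each $B(K_i,\rho)$. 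Once these uniform controls are in place, the remaining combinatorial and probabilistic estimates of \cite{Hwang} transcribe essentially verbatim.
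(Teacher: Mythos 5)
The paper's ``proof'' of Theorem~\ref{HW} is a single line: it cites Hwang--Sheu~\cite{Hwang} and substitutes the new ULDP (Theorem~\ref{thmULDP}) for the bounded-coefficient ULDP used there. Hwang--Sheu's own argument proceeds via a Markov-chain reduction on $\rho$-neighborhoods of the $K_i$, graph-theoretic estimates on hitting and exit times, and eigenvalue/renewal estimates for the resulting chain; the statement of Theorem~\ref{HW} (including the constant $\Lambda$) is lifted directly from their main theorem. Your proposal is therefore a genuinely different route: rather than transcribing the analytic/renewal argument, you propose a probabilistic coupling of two copies through a deepest class, using a Doeblin minorization to merge them. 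You also correctly identify the one genuinely new difficulty the paper is pointing at -- uniform-in-$\vare$ control of excursions over the exponentially long horizon $T=e^{\vare^{-1}(\Lambda+\alpha)}$, handled via the Lyapunov function $V$ -- and the one new ingredient it supplies, Theorem~\ref{thmULDP}.

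As a blind reconstruction, your coupling sketch is reasonable in spirit, but it would need substantially more work than the paper's one-line citation, and it leaves three nontrivial gaps. First, the Doeblin bound $P^\vare_{t_0}(z,\cdot)\ge e^{-\alpha/(2\vare)}m_0(\cdot)$ on $B(K_{i_0},\rho)$ is not automatic for a small-noise diffusion: one needs a Gaussian-type lower bound (e.g.\ via the Stroock--Varadhan support theorem or a Girsanov/bridge estimate) whose rate is governed by the local quasipotential, and one must then calibrate $\rho$ so that this rate is strictly less than $\alpha$. Second, the ``direct count'' of coupling attempts hides a timing problem: the two independent copies must be inside $B(K_{i_0},\rho)$ \emph{simultaneously} and at the start of a renewal window before an attempt can be made, but their entrance times are exponentially spread random variables, so one must set up a genuine regeneration structure (this is precisely what Hwang--Sheu's chain reduction provides). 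Third, the assertion that both copies reach $\bigcup_{j\in L_0}B(K_j,\rho)$ within a time $\asymp e^{\Lambda/\vare}$ is exactly the content of the Freidlin--Wentzell cycle hierarchy (Theorems~5.3 and~6.2 of \cite[Ch.~6]{FW}); it should be invoked explicitly rather than taken as given. If you fill these in, the coupling route yields the claimed bound (in fact a much stronger, super-exponential one at the coupling step; the $e^{-\delta/\vare}$ in the statement is dictated by the entrance/tail error terms, not by the coupling failure probability), but as written the argument is a plan rather than a proof.
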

 A subset $\mathcal{R}\subset \mathbb{R}^d$ is said to be a
{\it repeller} (an {\it attractor}) for the solution flow $\Psi$ of $\dot{x}=b(x)$ provided:
(i) $\mathcal{R}$ is nonempty, compact and invariant; and
(ii) $\mathcal{R}$ has a neighborhood $N\subset \mathbb{R}^d$, called a {\it fundamental neighborhood} of $\mathcal{R}$,  such that
$\displaystyle\lim_{t\rightarrow-\infty}{\rm dist}\big(\Psi_{t}(x),\mathcal{R}\big)=0$ ($\displaystyle\lim_{t\rightarrow +\infty}
{\rm dist}\big(\Psi_{t}(x),\mathcal{R}\big)=0$) uniformly in $x\in N$. In the case of attractor, we call $\mathcal{R}$
{\it Lyapunov stable}.

 Substituting (${\bf A}_1$) by the Assumptions {\rm \ref{ass1}} and {\rm\ref{ass2}}, following the same proof of \cite[Theorem 3.1, Corollary 3.3, p.78]{XCJ} we obtain the following two results, which significantly extend the scope of applications.

\begin{thm}\label{saddle}
 Suppose that Assumptions {\rm \ref{ass1}}, {\rm\ref{ass2}},  {\rm(}${\bf A}_3${\rm)} and \eqref{E} hold. Suppose that there exist $m$ points $x_1, x_2, \cdots, x_m$ in $\mathbb{R}^d$, an attractor $\mathcal{A}=:\mathcal{S}_{m+1}$ and compact equivalent classes $\mathcal{S}_1, \mathcal{S}_2, \cdots, \mathcal{S}_m$  such that $\mathcal{S}_i\supset\alpha(x_i), i=1, 2,\cdots, m$, $\{\mathcal{A}, \mathcal{S}_i, i=1, 2,\cdots, m\}$ are pairwise  disjoint, $\omega(x_i)\subset \mathcal{S}_{i+1}, i=1, 2,\cdots, m$.
 Then there exist neighborhoods $U_i$ of $\mathcal{S}_i, i=1, 2, \cdots, m$, $\kappa>0$ and $0<\varepsilon_*\le \vare_0$
such that for any $\vare\in(0,\varepsilon_*)$, we have
\[ \mu^{\vare}(U_i)\leq \exp\{-\kappa/ \vare\}  .   \]
As a result, if $\mu^{\vare_j}$  converges weakly $\mu$ as $\vare_j\rightarrow 0$,
then $\mu(U_i)=0, i=1, 2, \cdots, m$.
\end{thm}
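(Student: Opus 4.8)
The plan is to reproduce the argument of \cite[Theorem 3.1, Corollary 3.3, p.78]{XCJ}, whose only use of the boundedness hypothesis (${\bf A}_1$) was to invoke the sample-path uniform large deviation estimates; those are now furnished by Theorem \ref{thmULDP} under Assumptions \ref{ass1} and \ref{ass2}. First I would record the standing facts: under (${\bf A}_3$) and \eqref{E} the process $X^\vare$ is positive recurrent and admits a unique invariant probability measure $\mu^\vare$ for $0<\vare\le\vare_0$, and the Lyapunov function of Assumption \ref{ass2} together with \eqref{22}, \eqref{23} and the dissipativity \eqref{DissiP} gives uniform-in-$\vare$ tail control of $X^\vare$. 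I would then fix a forward-invariant fundamental neighbourhood $N$ of $\mathcal{A}$ with $\overline N$ disjoint from every $\mathcal{S}_i$, a point $p\in\mathrm{int}\,N$, and nested balls $g_0\Subset g_1\subset N$ around $p$; letting $\sigma_1$ be the first return of $X^\vare$ to $\partial g_0$ after a visit to $\partial g_1$, Khasminskii's cycle formula (Chapter~4 of \cite{KhasminskiiB}) writes $\mu^\vare(U_i)$ as the ratio of $\int_{\partial g_0}\EE_y\big[\int_0^{\sigma_1}\mathbf 1_{U_i}(X^\vare(t))\,dt\big]$ to $\int_{\partial g_0}\EE_y[\sigma_1]$ against the appropriate hitting kernel on $\partial g_0$. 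The denominator is bounded below by a constant uniformly in small $\vare$ (the deterministic flow needs a time bounded away from $0$ to cross from $\partial g_0$ to $\partial g_1$, and $X^\vare$ shadows it with probability bounded below by the ULDP lower bound), so everything reduces to bounding the numerator by $e^{-\kappa/\vare}$.

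The heart of the matter is a strictly positive lower bound for the quasipotential from $\partial g_0$ into a small neighbourhood of each $\mathcal{S}_i$. Since $N$ is forward invariant, the forward orbit of any $y\in\partial g_0$ and its closure stay in $\overline N$, which is disjoint from $\mathcal{S}_i$; hence no zero-cost path issued from $\partial g_0$ can reach $\mathcal{S}_i$, i.e.\ $\mathbb{V}(y,z)>0$ for all $y\in\partial g_0$, $z\in\mathcal{S}_i$. By the joint continuity of $\mathbb{V}$ on bounded sets (Lemma \ref{Vcon}, which uses \eqref{E}) and compactness, I would choose the neighbourhoods $U_i\supset\mathcal{S}_i$ small and a ball $B\supset g_1\cup\bigcup_i\overline{U_i}$ large so that the confined quasipotential obeys $\mathbb{V}_B(\partial g_0,\overline{U_i})\ge 2\kappa_i$ for some $\kappa_i>0$, simultaneously for all $i$ --- this geometric bookkeeping, together with the use of the acyclic chain $\mathcal{S}_1\to\cdots\to\mathcal{S}_m\to\mathcal{A}$ to guarantee that the $\mathcal{S}_i$ are dynamically transient so that such a $B$ exists, is precisely the portion of \cite{XCJ} that I would carry over unchanged. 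With this in hand, for $y\in\partial g_0$,
\[
\EE_y\Big[\int_0^{\sigma_1}\mathbf 1_{U_i}(X^\vare(t))\,dt\Big]\le\big(\EE_y[\sigma_1^2]\big)^{1/2}\,\PP_y\big(X^\vare\ \text{hits}\ U_i\ \text{before}\ \sigma_1\big)^{1/2},
\]
and I would bound the probability by $\PP_y(X^\vare\ \text{hits}\ U_i\ \text{before}\ \sigma_1\wedge\tau_B)+\PP_y(\tau_B<\sigma_1)$: the first term is $\le e^{-(\kappa_i-\gamma)/\vare}$ from the ULDP upper bound of Theorem \ref{thmULDP} applied inside $B$ at a level slightly above $\mathbb{V}_B(\partial g_0,\overline{U_i})$, and the second is $e^{-c(B)/\vare}$ with $c(B)\to\infty$ from the Lyapunov estimate in the spirit of \eqref{4-6}. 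Since $\EE_y[\sigma_1^2]$ is bounded uniformly in small $\vare$ by the same recurrence bounds, the numerator is $O(e^{-(\kappa_i-\gamma)/(2\vare)})$, giving $\mu^\vare(U_i)\le e^{-\kappa/\vare}$ for $\vare$ small with $\kappa:=\tfrac14\min_i\kappa_i$. Finally, each $U_i$ being open, weak convergence $\mu^{\vare_j}\Rightarrow\mu$ and the portmanteau theorem yield $\mu(U_i)\le\liminf_j\mu^{\vare_j}(U_i)=0$.

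I expect the main obstacle to be quantitative bookkeeping rather than a new idea: every estimate above is global in $\mathbb{R}^d$, whereas $b$ and $\sigma$ may grow polynomially, so I would have to re-establish, in this unbounded setting, uniform-in-$\vare$ non-explosion, tightness, finiteness of $\EE_y[\sigma_1^k]$, and the existence of a single ball $B$ whose exit before a cycle closes is $e^{-c(B)/\vare}$-rare with $c(B)\uparrow\infty$. This is exactly what the Lyapunov function of Assumption \ref{ass2} (conditions \eqref{As b V}, \eqref{22}, \eqref{23}) and the dissipativity (${\bf A}_3$)/\eqref{DissiP} are designed to provide, in the same way they were used for Proposition \ref{prop1} and throughout the proof of Theorem \ref{thmULDP}; once those a priori controls are installed, the remainder is a line-by-line copy of \cite{XCJ}. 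The one genuinely geometric point --- positivity of all the barriers $\kappa_i$ at once --- is local and therefore insensitive to the growth of the coefficients, resting only on Lemma \ref{Vcon} and the acyclic-chain hypothesis.
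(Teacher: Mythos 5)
Your high-level plan (rerun the argument of \cite[Theorems 3.1 and 3.2, Cor.\ 3.3]{XCJ} with the boundedness hypothesis replaced by the ULDP of Theorem \ref{thmULDP}) is exactly what the paper does, and the geometric observation that forward invariance of a fundamental neighbourhood of $\mathcal{A}$ forces $\mathbb{V}(\partial g_0,\mathcal{S}_i)>0$ is sound. But the middle of your proposal replaces the \cite{XCJ} argument with a different one, built on Khasminskii's cycle formula anchored at $\mathcal{A}$, and that replacement has a genuine gap. You write that $\EE_y[\sigma_1^2]$ ``is bounded uniformly in small $\vare$ by the same recurrence bounds'' and that $\PP_y(X^\vare\text{ hits }U_i\text{ before }\sigma_1\wedge\tau_B)\leq e^{-(\kappa_i-\gamma)/\vare}$. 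Neither assertion is available under the stated hypotheses: the theorem does not assume $(\mathbf{A}_4)$ nor that $\mathcal{A}$ is the only attractor (indeed, Example 4.5 applies it with infinitely many equivalence classes, and the van der Pol--Duffing example with two stable equilibria), so the excursion time $\sigma_1$ from $\partial g_0$ can be of order $e^{c'/\vare}$ with $c'>0$ (the process may get trapped near another attractor), making $\EE_y[\sigma_1^2]$ exponentially large. For the same reason the one-shot ULDP bound cannot control a hitting event over the random horizon $[0,\sigma_1\wedge\tau_B]$ --- over $\sim e^{c'/\vare}/T_0$ time blocks the hitting probability is only $\lesssim e^{(c'-\kappa_i)/\vare}$, which is not small when $c'>\kappa_i$. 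Your Cauchy--Schwarz split therefore produces the useless bound ``(huge)$^{1/2}\times$(not-small)$^{1/2}$''; the cancellation between a long cycle and the long time it spends away from $U_i$ is precisely what Cauchy--Schwarz destroys.

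The argument the paper actually cites is local to each $\mathcal{S}_i$, not global around $\mathcal{A}$, and this is visible in the detailed proof of Proposition \ref{SvdP}: one takes nested neighbourhoods $\mathcal{R}_{\delta_2}\subset\mathcal{R}_{\delta_1}$ of $\mathcal{S}_i$ with $\mathbb{V}(\partial\mathcal{R}_{\delta_1},\partial\mathcal{R}_{\delta_2^*})\geq s_0$, uses the acyclic chain to build sample paths of action $<\tfrac1{10}(s_0\wedge s_1)$ that exit $\mathcal{R}_\delta$ in a \emph{fixed} time $T$ (so a visit to $U_i$ ends in $O(1)$ time w.h.p.), and combines the asymmetry ``cheap exit / expensive entry'' with a fixed-$T$ ULDP over a Markov chain of length-$T$ blocks --- no moment bound on the full return time $\sigma_1$ is ever needed. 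The role of the chain $\mathcal{S}_1\to\cdots\to\mathcal{S}_m\to\mathcal{A}$ is thus not to make a single ball $B$ exist (dissipativity from $(\mathbf{A}_3)$ already gives that), but to guarantee the zero-cost escape orbit from each $\mathcal{S}_i$ that bounds the occupation time per visit. If you replace your Khasminskii-plus-Cauchy--Schwarz step by this local attempt-counting estimate, you recover the paper's argument; as written, the step fails whenever there is more than one attractor.
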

\begin{rmk}\label{rmk3.1}
Note that if $m=1$ and $\mathcal{S}_1$ is a repeller of $\dot{x}=b(x)$ then  its dissipation implies that there must be an attractor $\mathcal{S}_2$  such that $\dot{x}=b(x)$  admits a connecting orbit connecting  $\mathcal{S}_1$ and $\mathcal{S}_2$. Therefore, limiting measure always stays away from any repelling equivalent class.
\end{rmk}


\section{Applications}
In this part, we will provide several interesting models to give the precise description of the support of the limiting  measures of the invariant measures of corresponding SDEs (\ref{11}) using the results in the previous sections.
\subsection {The realization of Freidlin's two phase portraits of dynamical systems}
In the book \cite [Figure 12, p.151]{FW}, Freidlin and Wentzell depicted the phase portraits of some dynamical system (see Figure \ref{fig1} below) in which there exist exactly four equivalent classes. They designed the following transitive difficulty matrix
\begin{displaymath}
    \ \big(\mathbb{V}(K_i,K_j)\big)_{4\times 4}:= \left(
        \begin{array}{ccccc}
          & 0 & 0 & 9 & 9 \\
          & 1 & 0 & 9 & 9 \\
          & 7 & 6 & 0 & 6 \\
          & 1 & 0 & 0 & 0
        \end{array}
        \right )
\end{displaymath}
and used it to illustrate their estimates of the transition probabilities between the equivalent classes, the concentration of the limiting measures of invariant measures (see \cite [ p.170]{FW}), the exit position on the boundary of a domain (see \cite [ p.175]{FW}) and the asymptotics of the mathematical expectation of the exit time from a domain (see \cite [ p.177]{FW}). However, they did not provide examples of stochastic planar dynamical systems that have the phase portraits in Figure 1, which is a nontrivial matter. They did not  carry out the computation of the matrix $\big(\mathbb{V}(K_i,K_j)\big)_{4\times 4}$ for concrete systems either,  which is actually a difficult task for a given system of SDEs.

In the following, we first present a planar polynomial system to realize the global phase portraits of  Figure \ref{fig1} and then precisely compute the transition difficulty matrix $\big(\mathbb{V}(K_i,K_j)\big)_{4\times 4}$. As a consequence, we are able to identify the supports of the limiting measures of the invariant measures of the perturbed systems.
\begin{example}
Let $x:=(x_1,x_2)$ and consider the following deterministic planar polynomial system
\begin {equation}\label{DS}
\frac{dx}{dt}=b_1(x):=\left(x_2-F'(x_1)\big(H(x_1,x_2)+1\big),-F'(x_1)-x_2\big(H(x_1,x_2)+1\big)\right)
\end{equation}
 where
$$F(x_1):= \frac{x_1^4}{4}+\frac{x_1^3}{3}-x_1^2,\ F'(x_1)=x_1(x_1-1)(x_1+2),\ H(x_1,x_2):= \frac{x_2^2}{2}+F(x_1).$$
By the chain rule,
$$\frac{dH(x_1,x_2)}{dt}=-\big(H(x_1,x_2)+1\big)|\nabla H|^2.$$
Therefore, the equilibria of the system (\ref{DS}) are
$$K_1(-2,0),\ K_3(1,0),\ K_4(0,0)$$
and (\ref{DS}) admits the unique limit cycle
$$K_2: H(x_1,x_2)+1=0.$$
\end{example}

By linearized technique, we can prove that $K_1$ is an unstable node, $K_4$ is a saddle, $K_3$ is a stable focus and $K_2$ is a stable limit cycle.  Applying the LaSalle invariant principle to the interior and exterior of $K_2$, respectively, we get the global portraits depicted in Figure 1,
\begin{figure}[ht]
  \centering
  \includegraphics[width=0.6\textwidth]{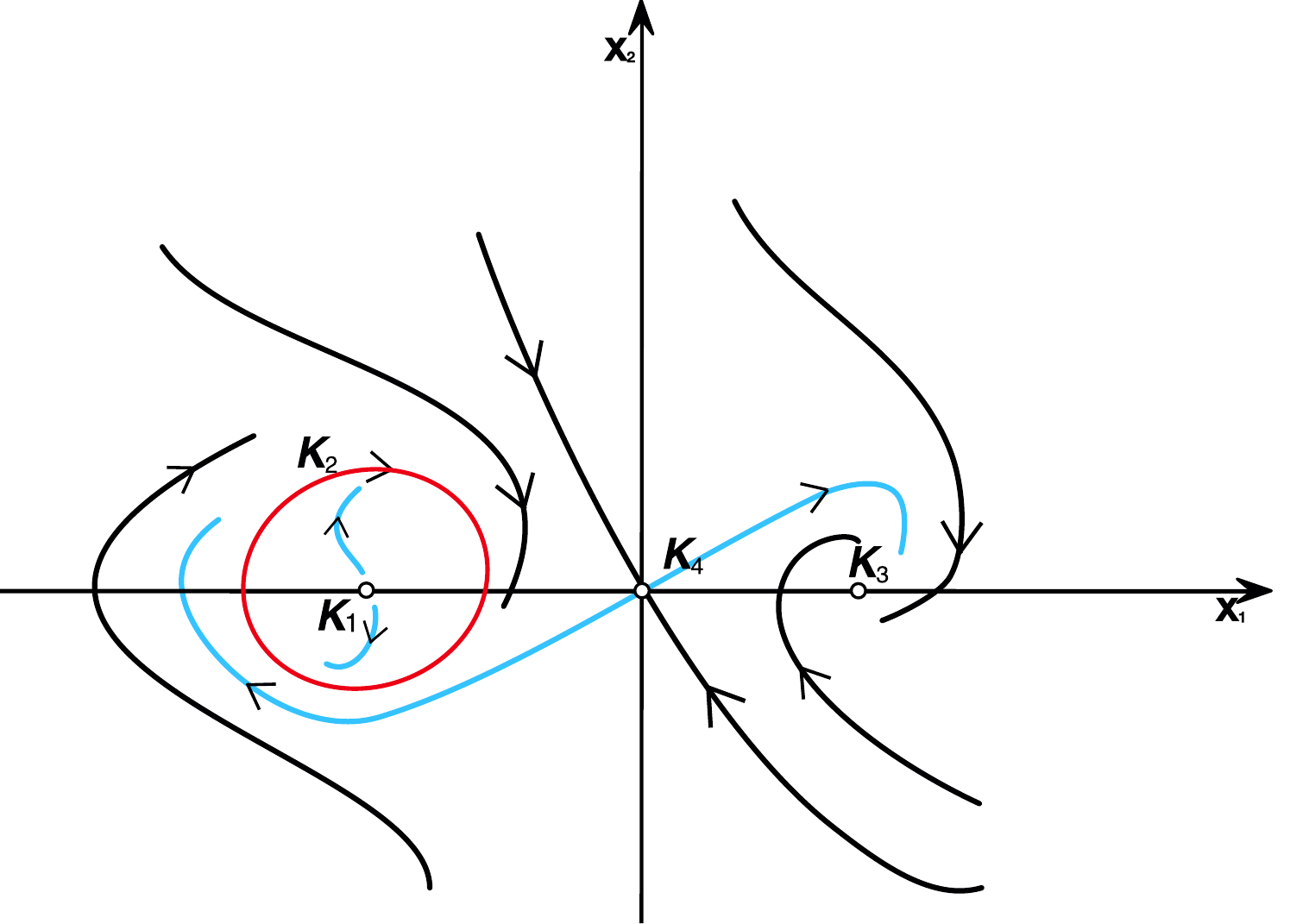}\\
  \caption{The phase portrait of system (\ref{DS}).}\label{fig1}
\end{figure}
which is the realization of \cite[Figure12, p.151]{FW}. It is easy to see that
\[
H(K_1)= -\frac{8}{3}= {\rm min}\{H(x_1,x_2)|(x_1,x_2)\in \RR^2\},\ H(K_2)=-1,\  H(K_3)=-\frac{5}{12},\ H(K_4)= 0.
\]

Now we consider the random  perturbation of (\ref{DS}) by an additive noise:
\begin {equation}\label{SS}
dx^\vare(t)=b_1(x^\vare(t))dt +\sqrt{\vare}dB(t),
\end{equation}
where $\{B(t)\}_{t \geq 0}$ is a $2$-dimensional Brownian motion. To characterize the support of the limiting measures of the invariant measures of the system (\ref{SS}), we verify the various conditions listed in the theorems in Section 3. The Assumptions  \ref{ass1} and the condition (${\bf A}_4$) are obviously satisfied. Thus, we only need to verify the Assumptions  \ref{ass2} and the condition (${\bf A}_3$).  Define a Lyapunov function $V(x_1,x_2)=H(x_1,x_2)+3>0$. Clearly, $\lim_{|(x_1,x_2)|\rightarrow \infty} V(x_1,x_2)=\infty$   and
$$\text{Trace}\big(\sigma^{*}(x_1,x_2)\nabla^2 V(x_1,x_2)\sigma(x_1,x_2)\big)=3x_1^2+2x_1-1>-2$$
which means that (\ref{23}) holds. Besides, the left-hand of (\ref{22}) with $\theta=2$ and $\eta=1$ is
$$
-(V(x_1,x_2)-2)|\nabla V|^2+(3x_1^2+2x_1-1)+\frac{|\nabla V|^2}{V}=:J_1+J_2+J_3.
$$
We can verify that
$$
\lim_{|(x_1,x_2)|\rightarrow \infty}J_1=-\infty,\ \lim_{|(x_1,x_2)|\rightarrow \infty}\frac{J_i}{J_1}=0\,\, {\rm for}\,\, i=2,3.
$$
Thus there exists a positive constant $C$ such that
$$J_1+J_2+J_3\le \big(\frac{1}{2}J_1+C\big) \rightarrow -\infty\ {\rm as}\ |(x_1,x_2)|\rightarrow \infty$$
which implies that both (\ref{22}) and (${\bf A}_3$) hold. Thus we have proved that Assumptions  \ref{ass1} and \ref{ass2}, (${\bf A}_3$) and (${\bf A}_4$) are satisfied.

From the phase portraits for (\ref{DS}) in Figure \ref{fig1}, we see that
\begin{displaymath}
    \ \big(\mathbb{V}(K_i,K_j)\big)_{4\times 4}:= \left(
        \begin{array}{ccccc}
          & 0 & 0 & a & a \\
          & b & 0 & a & a \\
          & b+c & c & 0 & c \\
          & b & 0 & 0 & 0
        \end{array}
        \right )
\end{displaymath}
where
\begin{equation*}\label{TP}
a:= \mathbb{V}(K_2,K_4),\  b:= \mathbb{V}(K_2,K_1),\ c:= \mathbb{V}(K_3,K_4).
\end{equation*}
We shall prove that
\begin{equation}\label{TP}
 \mathbb{V}(K_2,K_4)=1,\   \mathbb{V}(K_2,K_1)=\frac{25}{9},\  \mathbb{V}(K_3,K_4)=\frac{95}{144}.
\end{equation}
Let
\[
U(x_1,x_2):=\frac{H^2(x_1,x_2)}{2} + H(x_1,x_2),\ l(x_1,x_2):=(x_2, -x_1(x_1-1)(x_1+2)).
\]

Then $\langle \nabla U(x_1,x_2),l(x_1,x_2)\rangle\equiv 0$ and (\ref{DS}) can be rewritten as a quasipotential system
\begin {equation}\label{DS1}
\frac{dx}{dt}=b_1(x)= - \nabla U(x_1,x_2) + l(x_1,x_2).
\end{equation}
Note that
$$U(K_1)=\frac{8}{9},\ U(K_2)=-\frac{1}{2},\ U(K_3)=-\frac{95}{288},\ U(K_4)=0.$$
Following the arguments from Section 3 of \cite [Chapter 4]{FW}, we proceed as follows. For any $T>0$ and $\varphi\in {\bf AC}_X^T$ with $\varphi_T=Y$, we have
\begin{align*}
I_{X}^T(\varphi)
  =&\frac{1}{2}\int_{0}^{T}|\dot{\varphi}_s+\nabla U(\varphi_s)-l(\varphi_s)|^2ds\\
=&\frac{1}{2}\int_{0}^{T}|\dot{\varphi}_s-\nabla U(\varphi_s)-l(\varphi_s)|^2ds+2\int_{0}^{T}\langle\dot{\varphi}_s, \nabla U(\varphi_s) \rangle ds\\
=& \frac{1}{2}\int_{0}^{T}|\dot{\varphi}_s-\nabla U(\varphi_s)-l(\varphi_s)|^2ds+2(U(Y)-U(X))\\
\ge &  2(U(Y)-U(X)).
\end{align*}
This shows that for any $X, Y\in \RR^2$
\begin{equation}\label{RFE}
I_{X}^T(\varphi)=\frac{1}{2}\int_{0}^{T}|\dot{\varphi}_s-\nabla U(\varphi_s)-l(\varphi_s)|^2+2(U(Y)-U(X))
\end{equation}
and
\begin{equation}\label{LB}
\mathbb{V}(X, Y)\ge 2(U(Y)-U(X)).
\end{equation}
In particular, letting $Y=K_1$ and $X$ lie  in the interior of $K_2$, we have
$$\mathbb{V}(X, K_1)\ge 2\Big(\frac{8}{9}-U(X)\Big)$$
from which and the continuity of $\mathbb{V}(X,K_1)$ it follows that
$$\mathbb{V}(K_2, K_1)\ge 2\Big(\frac{8}{9}-U(K_2)\Big)=\frac{25}{9}.$$

In order to study the extremals of $I(\varphi)$ from $K_2$ to $K_1$, we have to consider the following equations for the extremals
\begin {equation}\label{EE}
\frac{dX}{dt}= \nabla U(x_1,x_2) + l(x_1,x_2)
\end{equation}
whose equilibria still are
$$K_1(-2,0),\ K_3(1,0),\ K_4(0,0)$$
and $K_2$ is the limit cycle of (\ref{EE}). Its phase portraits are drawn in Figure \ref{fig2} below.
\begin{figure}[h]
  \centering
  \includegraphics[width=0.6\textwidth]{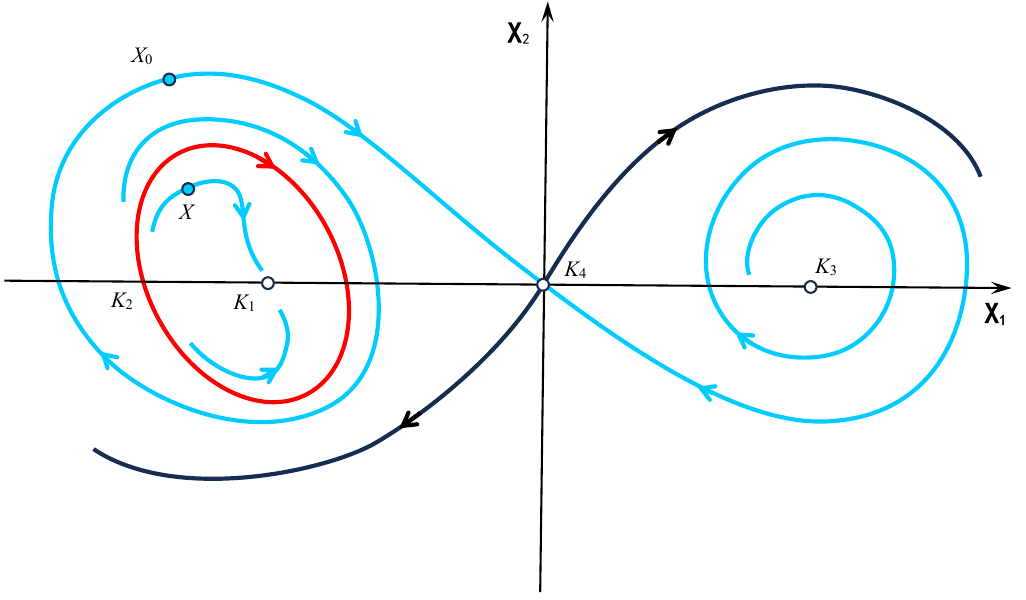}\\
  \caption{The phase portrait of system (\ref{EE}).}\label{fig2}
\end{figure}

Let $X\neq K_1$ be a point lying inside $K_2$ and denote by $\varphi_t, t\in \RR$ the solution of (\ref{EE}) passing through $X$. From Figure \ref{fig2}, $\alpha(X)=K_2$ and $\omega(X)=K_1$. For any $T>0$, $$2(U(\varphi_T)-U(\varphi_{-T}))=I(\varphi)\ge \mathbb{V}(\varphi_{-T},\varphi_T).$$
Let $T\rightarrow \infty$ to obtain that
$$\frac{25}{9}=2(U(K_1)-U(K_2))\ge \mathbb{V}(K_2, K_1).$$
This proves that $b=\mathbb{V}(K_2, K_1)=\frac{25}{9}$.

Fix $Y=K_4$ in (\ref{LB}), we get
$$\mathbb{V}(X, K_4)\ge 2(U(K_4)-U(X))=-2U(X).$$
From the continuity of $\mathbb{V}(X, K_4)$ it follows that
$$\mathbb{V}(K_2, K_4)\ge -2U(K_2)=1.$$
Choose $X_0\neq K_4$ in the stable manifold of $K_4$ illustrated in Figure \ref{fig2} and let $\varphi_t, t\in \RR$ be the solution of (\ref{EE}) passing through $X_0$. From Figure \ref{fig2}, $\alpha(X_0)=K_2$ and $\omega(X_0)=K_4$. For any $T>0$, $$2(U(\varphi_T)-U(\varphi_{-T}))=I(\varphi)\ge \mathbb{V}(\varphi_{-T},\varphi_T).$$
Letting $T \rightarrow \infty$, we have $1\ge \mathbb{V}(K_2, K_4)$, which means that $\mathbb{V}(K_2, K_4)=1$. Analogously, we can prove that $\mathbb{V}(K_3,K_4)=\frac{95}{144}$.

Finally, from (\ref{TP}) we conclude that
\begin{displaymath}
    \ \big(\mathbb{V}(K_i,K_j)\big)_{4\times 4}:= \left(
        \begin{array}{ccccc}
          & 0 & 0 & 1 & 1 \\
          & \frac{25}{9} & 0 & 1 & 1 \\
          & \frac{55}{16}& \frac{95}{144} & 0 & \frac{95}{144} \\
          & \frac{25}{9} & 0 & 0 & 0
        \end{array}
        \right ).
\end{displaymath}
Let $\mu^{\varepsilon}$ be the invariant measure of the solution of the SDEs (\ref{SS}) for $0<\varepsilon\ll 1$. Then for any $\varepsilon_i\rightarrow 0$ such that $\mu^{\varepsilon_i}$  converges weakly to $\mu$, Theorem \ref{saddle} and Remark \ref{rmk3.1} imply that $\mu(K_1)= \mu(K_4)=0$. Because $\mathbb{V}(K_3,K_2)=\frac{95}{144}<1=\mathbb{V}(K_2,K_3)$, it follows from Theorem \ref{FW} that $\mu^{\varepsilon}$  converges weakly to the unique ergodic measure $\mu_{K_2}(\cdot):=\frac{1}{T}\int_0^T\delta_{\varphi_t(X^*)}(\cdot)dt$ supported on $K_2$ as $\varepsilon\rightarrow 0$, where $X^*\in K_2$ and $T$ is the period of the limit cycle $K_2$. The density for $\mu_{K_2}(\cdot)$ is

$$m_{K_2}(X)=\frac{1}{T}\frac{1}{|\nabla H(X)|},\ X:=(x_1,x_2)\in K_2.$$

Consider the following
Cauchy problem $\mathbb{R}_+\times \mathbb{R}^2$:
\begin{equation}\label{AL}
   \begin{cases}
    \frac{\partial u^{\epsilon}(t,X)}{\partial t}=\frac{\epsilon}{2}\Delta u^{\epsilon}(t,X)+\langle b(X), \nabla u^{\epsilon}(t,X)\rangle,\\
     u^{\epsilon}(0,X)=g(X)\in C_b(\mathbb{R}^2).
\end{cases}
\end{equation}
Denote by $\mathcal{B}(K_2)$ and $\mathcal{B}(K_3)$ the attracting domains of $K_2$ and $K_3$, respectively. Define
$$g_{K_2}:=\frac{1}{T}\oint_{K_2}g(X)\frac{1}{|\nabla H(X)|}dl.$$
Here the integral is respect to arc measure. Then from the discussion above and \cite [Theorem 2.2]{FW2}, we derive the following.
\begin{prop}\label{ALRDE}
$\mathbb{V}(K_3,K_2)=\frac{95}{144}, \mathbb{V}(K_2,K_3)=1$ and $\mu^{\varepsilon}$  converges weakly to the unique ergodic measure $\mu_{K_2}(\cdot)$ as $\varepsilon\rightarrow 0$.

Let $\lim_{\epsilon\rightarrow 0} T(\epsilon)=+\infty$. Then we have \\
{\rm (1)} If $X\in \mathcal{B}(K_3)$, then
\begin{equation*}
   \lim_{\epsilon\rightarrow 0}u^{\epsilon}(T(\epsilon),X)=\begin{cases}
    g(K_3), & \text{if ~$\limsup_{\epsilon \rightarrow 0}\epsilon \log T(\epsilon)<\frac{95}{144}$},\\
       g_{K_2}, & \text{if ~$\liminf_{\epsilon \rightarrow 0}\epsilon \log T(\epsilon)>\frac{95}{144}$}.
\end{cases}
\end{equation*}\\
{\rm (2)} If $X\in \mathcal{B}(K_2)$, then\\
$$\lim_{\epsilon\rightarrow 0}u^{\epsilon}(T(\epsilon),X)=g_{K_2}.$$
\end{prop}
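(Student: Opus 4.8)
The statement has two layers: the quasipotential identities together with the weak convergence $\mu^{\varepsilon}\to\mu_{K_2}$, and the dichotomy (1)--(2) for the Cauchy problem \eqref{AL}. The plan is to obtain the first layer from the transition difficulty matrix $\big(\mathbb V(K_i,K_j)\big)_{4\times4}$ assembled above together with Theorem~\ref{FW}, and the second from \cite[Theorem 2.2]{FW2}, which rests only on the ULDP and the dynamical structure of \eqref{DS}; the ULDP for \eqref{SS} is supplied by Theorem~\ref{thmULDP}, since Assumptions~\ref{ass1}, \ref{ass2}, $({\bf A}_3)$, $({\bf A}_4)$ and \eqref{E} were checked above with $V=H+3$, and --- the coefficients being polynomial --- the classical Freidlin--Wentzell criterion is not available here, so Theorem~\ref{thmULDP} is genuinely needed.

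First I would nail down the two quasipotential values. Using $b_1=-\nabla U+l$ with $\langle\nabla U,l\rangle\equiv0$, hence \eqref{RFE}--\eqref{LB}: the basins $\mathcal B(K_2)$ and $\mathcal B(K_3)$ are separated by the closure of the stable manifold of the saddle $K_4$ (Figures~\ref{fig1}--\ref{fig2}), so any sampling path joining the two basins must meet that separatrix, on which $U=\frac{H^2}{2}+H$ attains its minimum precisely at $K_4$; then \eqref{LB} gives $\mathbb V(K_3,K_2)\ge 2\big(U(K_4)-U(K_3)\big)=\frac{95}{144}$ and $\mathbb V(K_2,K_3)\ge 2\big(U(K_4)-U(K_2)\big)=1$. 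The matching upper bounds I would get by concatenating an extremal of \eqref{EE} from $K_3$ (resp. from $K_2$) to $K_4$, of cost $\frac{95}{144}$ (resp. $1$) because the integrand in \eqref{RFE} vanishes on solutions of \eqref{EE}, with a heteroclinic orbit of \eqref{DS} from $K_4$ to $K_2$ (resp. to $K_3$), of zero cost --- the same device already used above for $\mathbb V(K_3,K_4)$ and $\mathbb V(K_2,K_4)$. Feeding the resulting matrix into the hierarchy constants $W(K_i)=\min_{g\in G(i)}\sum_{(m\to n)\in g}\mathbb V(K_m,K_n)$ and enumerating the spanning in-trees yields $W(K_2)=\frac{95}{144}$ (attained by the star $K_1\to K_2,\ K_4\to K_2,\ K_3\to K_2$), while $W(K_1),W(K_3),W(K_4)>\frac{95}{144}$; hence $\nu=\frac{95}{144}$ and $L_0=\{2\}$. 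Since the limit cycle $K_2$ supports a unique normalised invariant measure of $\dot x=b_1(x)$, namely $\mu_{K_2}$ with density $m_{K_2}$, Theorem~\ref{FW} yields $\mu^{\varepsilon}\to\mu_{K_2}$ weakly, and Theorem~\ref{saddle} together with Remark~\ref{rmk3.1} independently excludes mass on the repeller $K_1$ and the saddle $K_4$.

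Then I would treat the Cauchy problem. The solution of \eqref{AL} is $u^{\epsilon}(t,X)=\EE_X\big[g(X^{\epsilon}(t))\big]$, where $X^{\epsilon}$ solves \eqref{SS} at noise level $\epsilon$. The system \eqref{DS} has exactly the two attractors $K_2,K_3$, $K_2$ is the unique most stable equivalent class, and the escape cost from $\mathcal B(K_3)$ equals $\mathbb V(K_3,K_4)=\mathbb V(K_3,K_2)=\frac{95}{144}$. If $X\in\mathcal B(K_3)$ and $\limsup_{\epsilon\to0}\epsilon\log T(\epsilon)<\frac{95}{144}$, the ULDP together with the exit-time lemmas of \cite[Ch.~6]{FW} (licit here by Theorem~\ref{thmULDP}) will give that the probability that $X^{\epsilon}$ exits $\mathcal B(K_3)$ before time $T(\epsilon)$ tends to $0$, while on the complementary event $X^{\epsilon}(T(\epsilon))$ concentrates near the point attractor $K_3$ because $T(\epsilon)\to\infty$; hence $u^{\epsilon}(T(\epsilon),X)\to g(K_3)$. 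If instead $X\in\mathcal B(K_3)$ and $\liminf_{\epsilon\to0}\epsilon\log T(\epsilon)>\frac{95}{144}$ --- and, more generally, if $X\in\mathcal B(K_2)$ and $T(\epsilon)\to\infty$ --- then eventually $T(\epsilon)\ge\exp\big(\epsilon^{-1}(\Lambda+\alpha)\big)$ with $\Lambda=\nu-\min\{\mathbb V(g):g\in G(\{i,j\})\}=\frac{95}{144}$, so Theorem~\ref{HW} combined with $\mu^{\varepsilon}\to\mu_{K_2}$ gives $u^{\epsilon}(T(\epsilon),X)\to\int g\,d\mu_{K_2}=g_{K_2}$, using $g\in C_b(\RR^2)$ and the density formula for $m_{K_2}$. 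Collecting these three regimes is exactly \cite[Theorem 2.2]{FW2}, which finishes the proof.

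The hard part will be the quasipotential step of the second paragraph: the rigorous reduction to the saddle --- forcing an optimal transition path between the two basins through an arbitrarily small neighbourhood of $K_4$, and showing that $U$ (equivalently $H$) restricted to $\partial\mathcal B(K_3)$ is minimised at $K_4$ --- together with confirming from Figures~\ref{fig1}--\ref{fig2} the zero-cost heteroclinics $K_4\to K_2$, $K_4\to K_3$ of \eqref{DS} and the extremal connections $K_3\to K_4$, $K_2\to K_4$ of \eqref{EE} used for the upper bounds. A secondary point is to check that the proof of \cite[Theorem 2.2]{FW2}, stated there under bounded-coefficient hypotheses, uses nothing beyond the ULDP now provided by Theorem~\ref{thmULDP}.
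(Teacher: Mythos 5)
Your proposal follows the paper's own route: establish the transition-difficulty matrix via the decomposition $b_1=-\nabla U+l$ and \eqref{RFE}--\eqref{LB}, conclude $\mu^{\varepsilon}\to\mu_{K_2}$ from Theorem~\ref{FW} (with Theorem~\ref{saddle} and Remark~\ref{rmk3.1} excluding mass on $K_1,K_4$), and invoke \cite[Theorem 2.2]{FW2} for the Cauchy dichotomy --- exactly the paper's ``from the discussion above and \cite[Theorem 2.2]{FW2}.'' One slip in your elaboration: for $X\in\mathcal B(K_2)$ with $T(\epsilon)\to\infty$ you infer that ``eventually $T(\epsilon)\ge\exp\big(\epsilon^{-1}(\Lambda+\alpha)\big)$,'' which is false (e.g.\ $T(\epsilon)=\epsilon^{-1}$ gives $\epsilon\log T(\epsilon)\to 0$); that branch of \cite[Theorem 2.2]{FW2} is proved by direct concentration near $K_2$ followed by equidistribution around the attracting cycle on moderate time scales, not by the equilibration estimate of Theorem~\ref{HW}, so the two regimes should not be conflated even though your ultimate appeal to \cite[Theorem 2.2]{FW2} carries the conclusion.
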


\begin{example} \label{Ex4.2}
Let $X:=(x_1,x_2)$ and consider the following deterministic planar polynomial system
\begin {equation}\label{DS2}
\frac{dX}{dt}=b_2(X):=\left(x_2-F'(x_1)\big(H(x_1,x_2)+1\big),-F'(x_1)-x_2\big(H(x_1,x_2)+1\big)\right)
\end{equation}
 where
$$F(x_1):= \frac{x_1^4}{4}-\frac{x_1^3}{3}-x_1^2,\ F'(x_1)=x_1(x_1+1)(x_1-2),\ H(x_1,x_2):= \frac{x_2^2}{2}+F(x_1)$$
and its random perturbed system
\begin {equation}\label{SS2}
dX^\vare(t)=b_2(X^\vare(t))dt +\sqrt{\vare}dB(t).
\end{equation}
\end{example}
\noindent The system (\ref{DS2}) admits three equilibria
$$K_1(-1,0),\ K_3(0,0),\ K_4(2,0)$$
and the stable limit cycle
$$K_2: H(x_1,x_2)+1=0.$$
The global phase portraits of (\ref{DS2}) are depicted in Figure \ref{fig3} below,
\begin{figure}[h]
  \centering
  \includegraphics[width=0.5\textwidth]{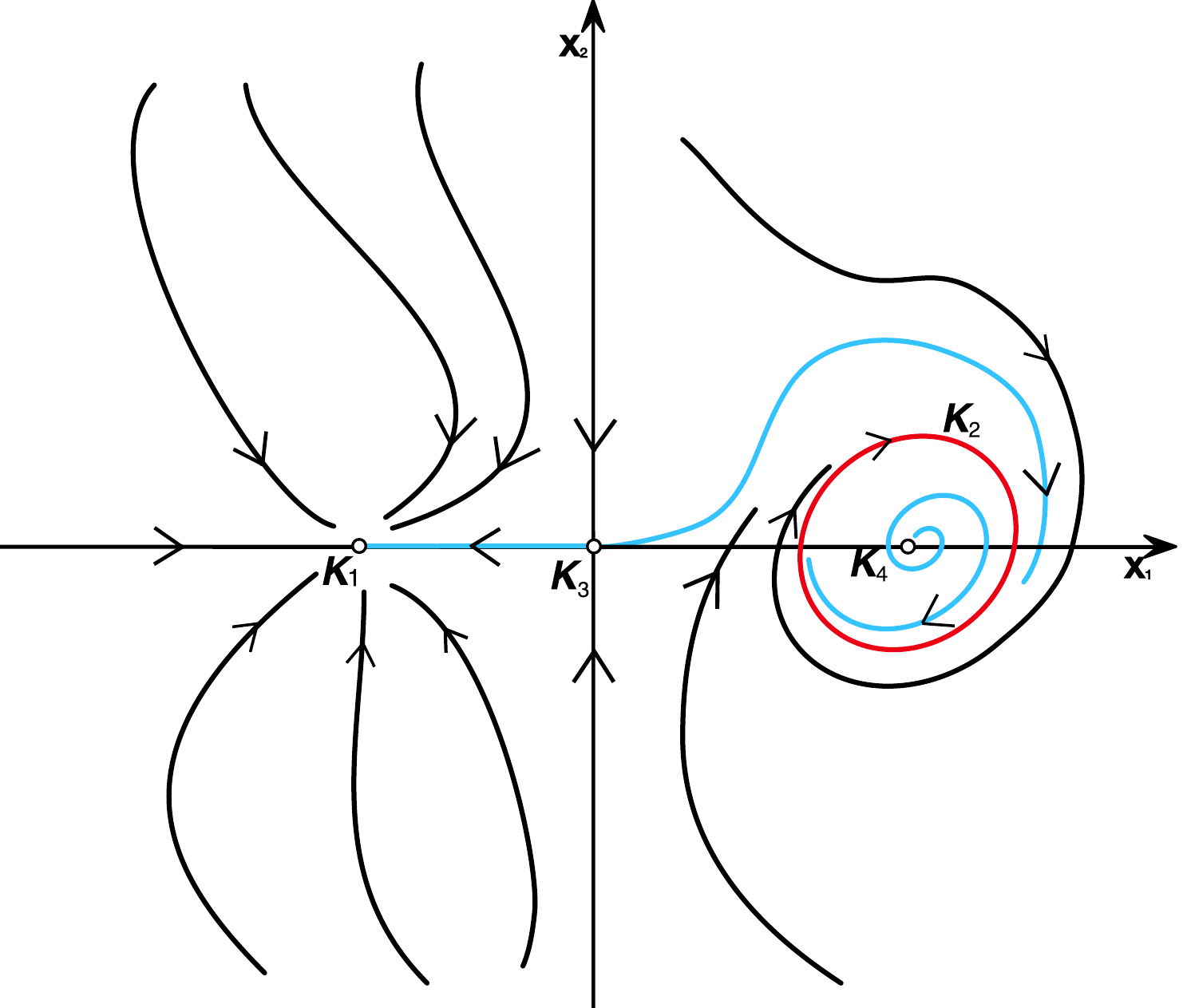}\\
  \caption{The phase portrait of system (\ref{DS2}).}\label{fig3}
\end{figure}
which is the realization of \cite [Figure 1(a), p.557]{FW2}. In the same way  as in Example 4.1, we can show that  the transition difficulty matrix is given by
\begin{displaymath}
    \ \big(\mathbb{V}(K_i,K_j)\big)_{4\times 4}:= \left(
        \begin{array}{ccccc}
          & 0 & \frac{95}{144} & \frac{95}{144} & \frac{55}{16} \\
          & 1 & 0 & 1 & \frac{25}{9} \\
          & 0 & 0 & 0 & \frac{25}{9} \\
          & 1 & 0 & 1 & 0
        \end{array}
        \right ).
\end{displaymath}
Let $\mu^{\varepsilon}$ be the invariant measure of the SDEs (\ref{SS2}) for $0<\varepsilon\ll 1$. Then  $\mu^{\varepsilon}$  converges weakly to the unique ergodic measure $\mu_{K_2}(\cdot):=\frac{1}{T}\int_0^T\delta_{\varphi_t(X^*)}(\cdot)dt$ supported on $K_2$ as $\varepsilon\rightarrow 0$, where $X^*\in K_2$ and $T$ is the period of the limit cycle $K_2$.
\vskip 0.3cm
Furthermore, the same result as Proposition \ref{ALRDE} holds  with $K_3$ replaced by $K_1$.
%
%
%

\subsection {Stochastic van der Pol equation and single diode circuit}\label{Section 4.2}
\begin{example}
van der Pol (1927) established the following well known triode circuit equation
$$X^{(2)} + (X^2-1)X^{(1)} +X=0$$
which is equivalent to
\begin{equation}\label{DvdPS}
\begin{cases}
    X^{(1)}= Y, \\
    Y^{(1)}=-[(X^2-1)Y +X]
\end{cases}
\end{equation}
and now called the van der Pol equation.  The Birkhoff center  is composed of the origin $O$ and the limit cycle $\Gamma$, where $O$ is a repeller and $\Gamma$ attracts all points except the origin $O$. The global portraits of (\ref{DvdPS}) are shown in Figure \ref{fig5}, see \cite{ZhangZhifen}.
\begin{figure}[h]
  \centering
  \includegraphics[width=0.4\textwidth]{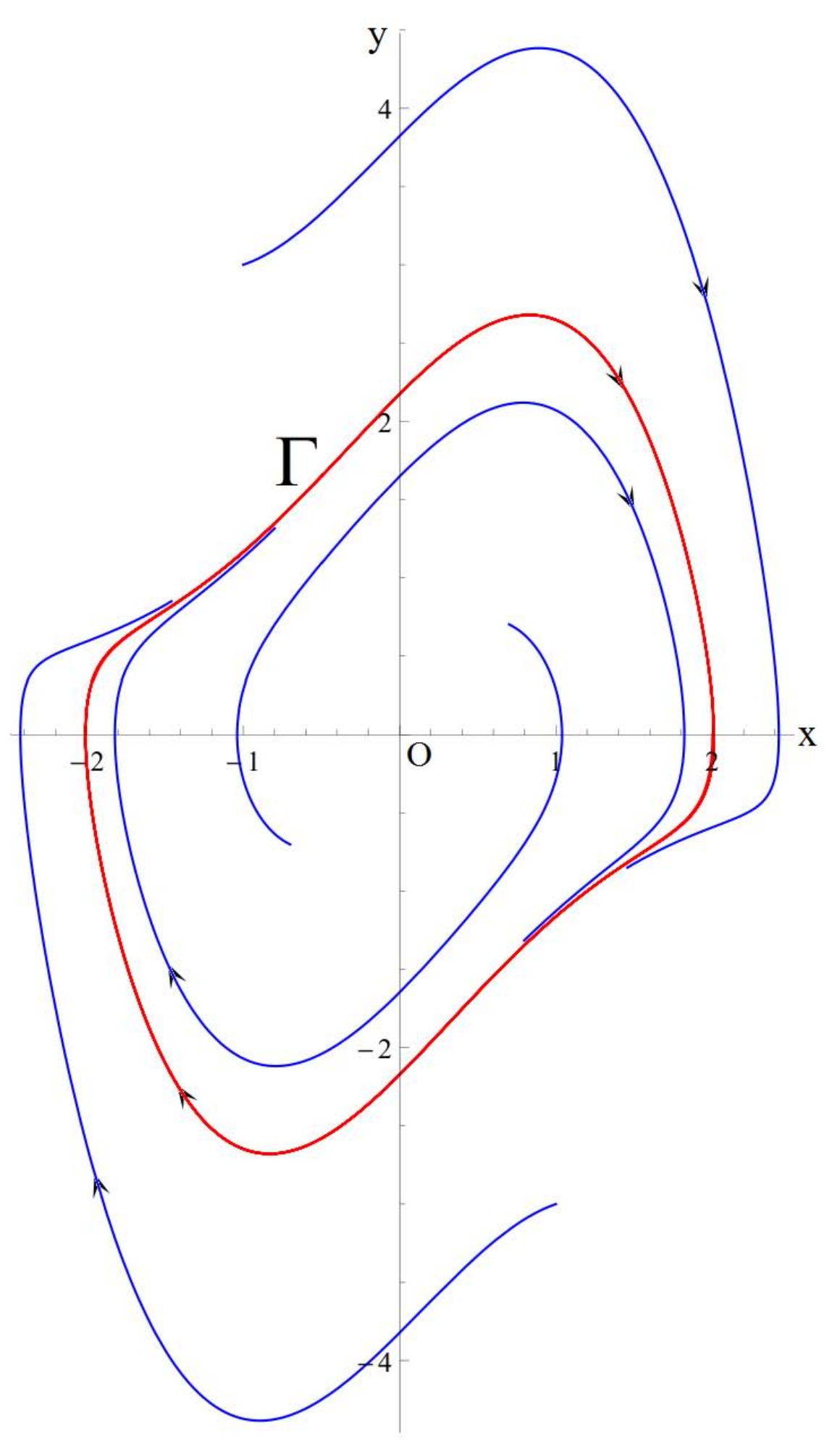}\\
  \caption{The unique limit cycle.}\label{fig5}
\end{figure}

Consider the random perturbation of the van der Pol equation:
\begin{equation}\label{SVDP}
X_{\varepsilon}^{(2)} + (X_{\varepsilon}^2-1)X_{\varepsilon}^{(1)} +X_{\varepsilon}=\sqrt{\varepsilon}\sigma(X_{\varepsilon},X_{\varepsilon}^{(1)})\dot{B}_t,\ \sigma(X_{\varepsilon},X_{\varepsilon}^{(1)})>0.
\end{equation}
Let $Y_{\varepsilon}:=(Y_{\varepsilon}^1,Y_{\varepsilon}^2) := (X_{\varepsilon},X_{\varepsilon}^{(1)})$. Then (\ref{SVDP}) is equivalent to the following system of SDEs:
\begin{equation}\label{SvdPS}
\begin{cases}
    dY_{\varepsilon}^{1}(t) = Y_{\varepsilon}^{2}(t)dt, \\
    dY_{\varepsilon}^{2}(t)=-[(Y_{\varepsilon}^{1}(t)^2-1)Y_{\varepsilon}^{2}(t) +Y_{\varepsilon}^{1}(t)]dt+\sqrt\eps \si\big(Y_{\varepsilon}^{1}(t),  Y_{\varepsilon}^{2}(t)\big) dB(t),\\
    (Y_{\varepsilon}^{1},  Y_{\varepsilon}^{2})(0) = (x_1,  x_2)=:x \in \RR^2.
\end{cases}
\end{equation}
If the first equation of (\ref{SvdPS}) were also perturbed by a Brownian motion, independent of $B(t)$, then one could prove that the limiting measure of invariant measures for corresponding system (\ref{SvdPS}) is the Haar measure of $\Gamma$ by Theorem \ref{saddle} and Remark \ref{rmk3.1}, the readers are referred to the discussion below. However, for system (\ref{SvdPS}), which has interesting physical background, we are not able to prove that its quasipotential $\mathbb{V}(x,y)$ is continuous on $\mathbb{R}^2\times \mathbb{R}^2$, so  it is not straightforward to get the above conclusion. But,  with the help of Lemma \ref{PropQ 0808}, we still can prove the same result.

Let $b(Y^1,Y^2)$ and $(0,\sigma(Y^1,Y^2))^*$ denote the drift  and diffusion coefficient of (\ref{SvdPS}), respectively.
According to Nevelson's, as in \cite [p.82]{KhasminskiiB}, we construct the following Lyapunov function for (\ref{SvdPS})
\begin{equation}\label{LyaF}
V(Y^1,Y^2)=\frac{\sqrt{5}-1}{24}(Y^1)^4+\frac{1}{2}\left(Y^2+\frac{(Y^1)^3}{3}-\frac{\sqrt{5}+1}{2}Y^1\right)^2.
\end{equation}
We have
$$V_{Y^1}=\frac{(Y^1)^5}{3}-\frac{3\sqrt{5}+5}{6}(Y^1)^3+(Y^1)^2Y^2+\frac{\sqrt{5}+3}{2}Y^1-\frac{\sqrt{5}+1}{2}Y^2;$$
$$V_{Y^2}=\frac{(Y^1)^3}{3}-\frac{\sqrt{5}+1}{2}Y^1+Y^2;$$
$$\langle b(Y^1,Y^2),\nabla V(Y^1,Y^2) \rangle = -\frac{(Y^1)^4}{3}+\frac{\sqrt{5}+1}{2}(Y^1)^2-\frac{\sqrt{5}-1}{2}(Y^2)^2.$$
We have the following result
\begin{prop}\label{SvdP}
Suppose that $\si(Y^1,Y^2)$ is a positive,  locally Lipschitz continuous function  satisfying $\si^2(Y^1,Y^2) \leq c_1((Y^1)^4 + (Y^2)^2 +1)$, $c_1>0$. Then there exists $\varepsilon_0>0$ such that for any given $\varepsilon\in (0,\varepsilon_0]$, the set $\mathcal{J^{\varepsilon}}$ of invariant measures of the system {\rm(\ref{SvdPS})} is nonempty and $\mathcal{J}:=\bigcup_{0<\varepsilon \le \varepsilon_0}\mathcal{J^{\varepsilon}}$ is tight. Furthermore, let $\mu^{\varepsilon_i}\in \mathcal{J}^{\varepsilon_i}$ such that $\mu^{\varepsilon_i}$  converges weakly to $\mu$ as $\varepsilon_i\rightarrow 0$, then $\mu=\mu_{\Gamma}$, where $\Gamma$ is the unique limit cycle  of the deterministic van der Pol equation  {\rm(\ref{DvdPS})} illustrated in Figure {\rm\ref {fig5}} and $\mu_{\Gamma}$ is its corresponding Haar measure supported on $\Gamma$.
\end{prop}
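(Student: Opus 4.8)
The plan is to handle the four assertions of Proposition \ref{SvdP} with the single Lyapunov function $V$ of \eqref{LyaF} together with the abstract results of Sections 2--3. First I would prove existence and tightness directly from $V$. Since $V_{Y^2Y^2}\equiv 1$, the generator of \eqref{SvdPS} acts on $V$ by $\mathcal L^\vare V=\langle b,\nabla V\rangle+\tfrac\vare2\si^2$, which, by the formula for $\langle b,\nabla V\rangle$ recorded just before Proposition \ref{SvdP}, equals $-\tfrac13(Y^1)^4+\tfrac{\sqrt5+1}{2}(Y^1)^2-\tfrac{\sqrt5-1}{2}(Y^2)^2+\tfrac\vare2\si^2$. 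Using the growth bound $\si^2\le c_1((Y^1)^4+(Y^2)^2+1)$ and choosing $\vare_0>0$ with $\tfrac{c_1\vare_0}{2}<\min\{\tfrac13,\tfrac{\sqrt5-1}{2}\}$, the quartic term and the $(Y^2)^2$ term keep their signs for all $\vare\le\vare_0$, so $C_*:=\sup_{\RR^2}\sup_{0<\vare\le\vare_0}\mathcal L^\vare V<\infty$ while $\gamma_R:=\inf_{|x|\ge R}\inf_{0<\vare\le\vare_0}(-\mathcal L^\vare V)\to\infty$ as $R\to\infty$. Khasminskii's criterion \cite[Ch.~4]{KhasminskiiB} then gives $\mathcal J^\vare\ne\emptyset$, and from $\int\mathcal L^\vare V\,d\mu^\vare=0$ one gets $\mu^\vare(\{|x|\ge R\})\le C_*/\gamma_R$, small uniformly in $\vare\le\vare_0$; with the coercivity \eqref{As b V} this is the tightness of $\mathcal J$.

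Next I would verify that Theorem \ref{thmULDP}, and hence Lemma \ref{PropQ 0808}, applies to \eqref{SvdPS}. Assumption \ref{ass1} holds because $b$ and $\si$ are locally Lipschitz. For Assumption \ref{ass2} I again take $V$ from \eqref{LyaF}: \eqref{As b V} has just been checked; the diffusion matrix of \eqref{SvdPS} is $(0,\si)^{*}$, so $\mathrm{Trace}(\si^{*}\nabla^2V\si)=\si^2\ge 0$ and \eqref{23} is automatic; and, reading off from \eqref{LyaF} that $V_{Y^2}^2\le 2V$, the function $J$ in \eqref{22} satisfies $J=\langle b,\nabla V\rangle+\tfrac\theta2\si^2+\tfrac{\si^2 V_{Y^2}^2}{\eta V}\le\langle b,\nabla V\rangle+(\tfrac\theta2+\tfrac2\eta)\si^2$, which for $\theta$ small and $\eta$ large (so that $(\tfrac\theta2+\tfrac2\eta)c_1<\min\{\tfrac13,\tfrac{\sqrt5-1}{2}\}$) is bounded above, hence $\le C(1+V)$. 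Thus $\{Y^\vare\}$ satisfies a ULDP on $C([0,T],\RR^2)$; since $\si>0$ its rate function has the form \eqref{high-rate-0808}, so Lemma \ref{PropQ 0808} gives that $\mathbb V$ is finite and upper semicontinuous on $\RR^2\times\RR^2$, continuous at the equilibrium $O$, and satisfies part (iii).

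Finally I would identify $\mu$. By \cite[Theorem~3.1]{Chen2020}, $\mathrm{supp}(\mu)\subseteq B(\Psi)=\{O\}\cup\Gamma$; and passing to the limit in $\int f\,d\mu^{\vare_i}=\int (P^{\vare_i}_tf)\,d\mu^{\vare_i}$ (using local Lipschitzness of $b,\si$, tightness of $\mathcal J$, and $X^{\vare,x}\to\Psi_\cdot(x)$ locally uniformly) shows $\mu$ is invariant for the flow $\Psi$; as $\Gamma$ is a single periodic orbit whose only $\Psi$-invariant probability is $\mu_\Gamma$, everything reduces to $\mu(\{O\})=0$. Since $O$ is a repeller of \eqref{DvdPS} and the system is dissipative, there is a heteroclinic orbit from $O$ to $\Gamma$, so this is exactly the configuration of Theorem \ref{saddle} with $m=1$, $\mathcal S_1=\{O\}$ and $\mathcal A=\Gamma$, the one missing hypothesis being the nondegeneracy \eqref{E}, which in that proof enters only through Lemma \ref{Vcon}. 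I would replace Lemma \ref{Vcon} by Lemma \ref{PropQ 0808} and rerun the Freidlin--Wentzell/\cite{XCJ} estimate: with a local Lyapunov function $\ell\ge 0$ for the repeller and sublevel sets $V_1=\{\ell<r_1\}\Subset W_1=\{\ell<r_2\}$ such that $\overline{W_1}\cap\Gamma=\emptyset$, the outward drift on $\{\ell\le r_2\}$ makes the expected exit time of $Y^\vare$ from $W_1$ started inside $V_1$ be $O(1)$ uniformly in small $\vare$, while a positive barrier $c:=\inf\{I^T_z(\psi):z\in\partial W_1,\ \psi \text{ meets }\overline{V_1},\ T>0\}>0$ together with the ULDP upper bound in Theorem \ref{thmULDP}(ii) forces the expected return time of $Y^\vare$ from $\partial W_1$ to $V_1$ to be at least $\exp\{(c-\gamma)/\vare\}$, Lemma \ref{PropQ 0808}(iii) supplying the uniformly bounded auxiliary transitions needed in the excursion decomposition. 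Khasminskii's occupation-time formula for $\mu^\vare$ then yields $\mu^\vare(V_1)\le C\exp\{-(c-2\gamma)/\vare\}$, and letting first $\vare_i\to0$ and then $r_1\downarrow 0$ gives $\mu(\{O\})=0$, whence $\mu=\mu_\Gamma$.

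The step I expect to be the main obstacle is establishing that the barrier $c$ is strictly positive and making the accompanying return-time estimate rigorous with only upper semicontinuity of $\mathbb V$ (and its continuity at equilibria) at hand, the degeneracy of the noise ruling out the clean continuity statement of Lemma \ref{Vcon}; this forces one to work with the explicit control paths produced in the proof of Lemma \ref{PropQ 0808}, arranged so as to stay inside $W_1$, in the spirit of the constructions around \eqref{eq lemma 3.1 1004}--\eqref{eq control 001}. The remaining ingredients --- Khasminskii's existence and occupation-time formulas, the excursion decomposition, and the passage to the weak limit --- are routine once the ULDP and Lemma \ref{PropQ 0808} are in place.
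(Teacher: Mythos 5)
Your proposal is correct and tracks the paper's proof closely: the same Lyapunov function \eqref{LyaF} handles existence, tightness, and the verification of Assumptions \ref{ass1}--\ref{ass2} for the ULDP (including the key inequality $V_{Y^2}^2\le 2V$), and the non-concentration on the repeller $O$ is obtained exactly as in the paper by rerunning the argument of \cite[Theorem 3.1]{XCJ} with Lemma \ref{PropQ 0808} (upper semicontinuity of $\mathbb V$, continuity at the equilibrium, and the uniform-time bound of part (iii)) standing in for Lemma \ref{Vcon}. The only cosmetic difference is that the paper invokes \cite[Theorem 3.1]{Chen2020} for tightness, invariance of the limit, and support in the Birkhoff center, whereas you rederive these directly; the substance is identical.
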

\begin{proof}
Let $ 0<\varepsilon \le \varepsilon_0:=\frac{1}{6c_1}$. By the assumption of $\sigma$, there exists a positive constant $R$ such that for $  (Y^1)^2+(Y^2)^2\ge R,$
\DEQS
\mathcal{L}^{\varepsilon}V(Y^1,Y^2)
&:=& \langle b(Y^1,Y^2),\nabla V(Y^1,Y^2) \rangle+\frac{\varepsilon}{2 } \si^2(Y^1,Y^2)\\
&\leq&  -\frac{(Y^1)^4}{3}+\frac{\sqrt{5}+1}{2}(Y^1)^2-\frac{\sqrt{5}-1}{2}(Y^2)^2+\frac{\varepsilon c_1}{2 }\left((Y^1)^4 + (Y^2)^2 +1\right)\\
&\leq&  -\frac{(Y^1)^4}{4}+\frac{\sqrt{5}+1}{2}(Y^1)^2-\frac{(Y^2)^2}{4}+\frac{1}{12 }\\
&\leq&  -\frac{1}{5}\left((Y^1)^4 +(Y^2)^2\right),
\EEQS
where $\mathcal{L}^{\varepsilon}$ denotes the generator of the system (\ref{SvdPS}).
Denote by $\mathcal{J^{\varepsilon}}$ the set of all
 invariant measures of (\ref{SvdPS}) for a given $0<\varepsilon \le \varepsilon_0$. By \cite [Theorem 3.1]{Chen2020}, $\mathcal{J^{\varepsilon}}$ is nonempty and $\mathcal{J}:=\bigcup_{0<\varepsilon \le \varepsilon_0}\mathcal{J^{\varepsilon}}$ is tight. Furthermore, let $\mu^{\varepsilon_i}\in \mathcal{J}^{\varepsilon_i}$ such that $\mu^{\varepsilon_i}$  converges weakly to $\mu$ as $\varepsilon_i\rightarrow 0$. Then $\mu$ is an invariant measure of the  deterministic van der Pol equation {\rm(\ref{DvdPS})} and its support is contained in the Birkhoff center $\Gamma\cup\{O\}$.  The  origin of the  deterministic van der Pol equation is a repeller and the unique limit cycle $\Gamma$ is stable, which is illustrated in Figure \ref{fig5} (see \cite{ZhangZhifen}).

  In order to see that the system (\ref{SvdPS}) admits ULDP, we are  going to check that the Lyapunov function $V(Y^1,Y^2)$ constructed above satisfies the Assumption \ref{ass2}. \eqref{As b V} and \eqref{23} are obvious because
  $$\text{Trace}\big((0,\sigma(Y^1,Y^2))\nabla^2 V(Y^1,Y^2)(0,\sigma(Y^1,Y^2))^*\big) =\sigma^2(Y^1,Y^2)>0.$$
 For any given $ 0<\varepsilon \le \varepsilon_0$, let $\theta=\frac{\varepsilon}{2}$ and $\eta=\frac{8}{\varepsilon}$ in \eqref{22}, then we have the following estimate for the left-hand of \eqref{22}:
 $$J\le \mathcal{L}^{\varepsilon}V(Y^1,Y^2)\le -\frac{1}{5}\left((Y^1)^4 +(Y^2)^2\right),\ {\rm for}\  (Y^1)^2+(Y^2)^2\ge R$$
which implies that \eqref{22} holds. Applying Theorem \ref{thmULDP}, we conclude that the system (\ref{SvdPS}) admits ULDP.

Finally, we shall use Lemma \ref{PropQ 0808} and the idea to prove \cite[Theorem \ref{saddle}]{XCJ} to get that there is no concentration for limiting measures on the repeller $O(0,0)$. This will prove that when $\varepsilon\rightarrow 0$ the invariant measure $\mu^{\varepsilon}$ of {\rm (\ref{SvdPS})}  converges weakly to  the Haar measure $\mu_{\Gamma}$.

In order to keep the same notation as the proof of  \cite[Theorem 3.1]{XCJ}, we let $\mathcal{R}=O(0,0)$.

 From the proof of \cite [Lemma 4.1]{XCJ},  we know that \cite [Lemma 4.1]{XCJ} still holds as soon as ULDP  is satisfied. Thus, applying this lemma, we get that there exist $\delta_1,\ \delta^*_2\in(0,\delta),\ \delta_1>\delta^*_2$
and $s_0>0$ such that $\mathbb{V}(\partial \mathcal{R}_{\delta_1},\partial \mathcal{R}_{\delta^*_2})\geq s_0$. Set $\delta_2=\delta^*_2/2$. Applying \cite [Proposition 2.1]{XCJ}, we have
$$T_0=\inf\big\{u\geq 0:\Psi_{t}\big(\overline{\mathcal{R}_{\delta_1}}\backslash\mathcal{R}_{\delta_2}\big)
\subset(\mathcal{R}_{\delta})^c,t\geq u\big\}<+\infty.$$ Let $$F_0=\{\psi\in {\bf C}_{T_0}:\psi(0)\in \overline{\mathcal{R}_{\delta_1}}\backslash\mathcal{R}_{\delta_2}, \psi(T_0)\in \overline{\mathcal{R}_{\delta_1}}\}.$$
Then it is a closed subset of ${\bf C}_{T_0}$, $F_0^0$($:= \{ \varphi(0) :  \varphi \in F_0\}$)  is bounded and $F_0$ does not contain any solution of system {\rm(\ref{DvdPS})} by the definition of $T_0$.
By  \cite [Proposition 2.4]{XCJ}, we have
\[ s_1:= \inf\{I^{T_0}_{x}(\psi): \psi\in F_0,\ x=\psi(0)\in \overline{\mathcal{R}_{\delta_1}}\backslash\mathcal{R}_{\delta_2}\ \}=:I^{T_0}(F_0)>0.\]

%
Since $O$ is an equilibrium of {\rm(\ref{DvdPS})}, $\mathbb{V}(O,O)=0$ by its definition. Applying (ii) of Lemma \ref{PropQ 0808}, we know that $\mathbb{V}(x,y)$ is continuous at $(O,O)$. Thus, there exists $0<\delta_3<\delta_2$ such that
\begin{equation}\label{QC}
\mathbb{V}(O,y),\ \mathbb{V}(x,O)<\frac{1}{20}(s_0\wedge s_1)\ {\rm for\ all}\ x,y\in \overline{\mathcal{R}_{\delta_3}}.
\end{equation}
Using \cite [Proposition 2.1]{XCJ}, we get that
\begin{equation}\label{T3}
T_3=\inf\big\{u\geq 0:\Psi_{t}\big(\overline{\mathcal{R}_{\delta_1}}\backslash\mathcal{R}_{\delta_3}\big)
\subset(\mathcal{R}_{\delta})^c,\forall t\geq u\big\}<+\infty.
\end{equation}
 Then obviously $T_0\leq T_3$.

Fix a point $y_0\in \partial{\mathcal{R}_{\delta_{3}}}$. Then by (\ref{QC}), there exist $T_2$ and $\tilde{\psi}^0\in {\bf C}_{T_2}$ such that $\tilde{\psi}^0(0)=O,\ \tilde{\psi}^0(T_2)=y_0$ and
\begin{equation}\label{Q1}
I^{T_2}_O(\tilde{\psi}^0)<\frac{1}{20}(s_0\wedge s_1).
\end{equation}
Besides, from (iii) of Lemma \ref{PropQ 0808} it follows that there exists $T_1$ such that for any $x\in \overline{\mathcal{R}_{\delta_3}}$, there exist $T^x\le T_1$ and $\tilde{\psi}^x\in {\bf C}_{T^x}$ with $\tilde{\psi}^x(0)=x,\ \tilde{\psi}^x(T^x)=O$ satisfying
\begin{equation}\label{Q2}
I^{T^x}_x(\tilde{\psi}^x)<\frac{1}{20}(s_0\wedge s_1).
\end{equation}
Let $T=T_1+T_2+T_3$. Then for each $x\in\overline{\mathcal{R}_{\delta_2}}$,
if $x\in \overline{\mathcal{R}_{\delta_2}}\backslash\mathcal{R}_{\delta_{3}}$, $\psi^x:=\Psi_{\cdot}(x)|_{[0,T]}$; for $x\in \mathcal{R}_{\delta_{3}}$,
\[
\psi^x:=\left\{
         \begin{array}{ll}
           \tilde{\psi}^x(t), & t\in [0, T^x], \\
           \tilde{\psi}^0(t-T^x), & t\in [T^x, T^x+T_2],\\
           \Psi_{t-T^x-T_2}(y_0), & t\in [T^x+T_2, T].
         \end{array}
       \right.
\]
From (\ref{Q1}), (\ref{Q2}) and (\ref{T3}) it follows  immediately that
\begin{equation}\label{repup}
  I^T_x(\psi^x)\leq \frac{1}{10}(s_0\wedge s_1),\ {\rm and}
\end{equation}
\begin{equation}\label{reploc}
  \psi^x(T)\in(\mathcal{R}_{\delta})^c.
\end{equation}
The remaining proof is entirely the same as that of \cite [Theorem 3.1]{XCJ} if the notation $I^T_x$ here is replaced by $S_T^x$ there. Thus, we conclude that there exists a neighborhood $U_0$ of $\mathcal{R}$, $\kappa>0$ and $\varepsilon_*>0$
such that for any $\vare\in(0,\varepsilon_*)$, we have
\[ \mu^{\vare}(U_0)\leq \exp\{-\kappa/ \vare\}  .   \]
This completes the proof.

\end{proof}
\end{example}

\begin{rmk}\label{rmk4.1}
The Lyapunov function (\ref{LyaF}) can serve us to prove both  stochastically and periodically forced van der Pol equation
\begin{equation*}
X_{\varepsilon}^{(2)} + (X_{\varepsilon}^2-1)X_{\varepsilon}^{(1)} +X_{\varepsilon}= \sin \omega t+\sqrt{\varepsilon}\sigma(X_{\varepsilon},X_{\varepsilon}^{(1)})\dot{B}_t,\ \sigma(X_{\varepsilon},X_{\varepsilon}^{(1)})>0
\end{equation*}
admits a periodic stationary distribution under the same condition on the diffusion coefficient, which is again new as far as we know.
\end{rmk}

\begin{example}
It is important to investigate the system behavior of random perturbation of voltage in electric circuits for circuit performance optimization, fault diagnosis, robustness analysis, etc., see for example \cite{Xu2021}.  The following is the  single diode circuit taken from Moser \cite{Moser}, see Figure \ref{fig4}. Here the rectangle is the symbol for the nonlinear characteristic, given by $i=f(v)$. During the operation of the circuit, the voltage $E$ is usually randomly perturbed by external noise, component variations and temperature changes. Let $L$ and $C$ be positive constants. Then the stochastic diode circuit equation reads as

\begin{figure}[h]
  \centering
  \includegraphics[width=0.5\textwidth]{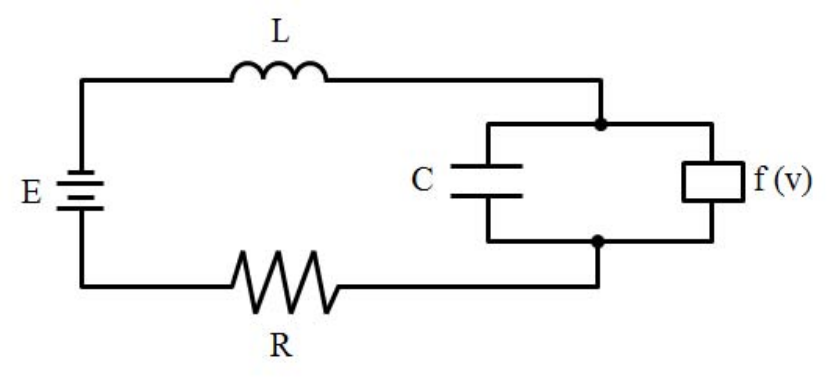}\\
  \caption{Single Diode Circuit.}\label{fig4}
\end{figure}

\begin{equation}\label{1DELEs}
\begin{cases}
    Ldi_t= \left(E-Ri_t-v_t\right)dt+ \sqrt{\varepsilon} \sigma(v_t, i_t)dB_t,\ i, v\in \mathbb{R}, \\
     Cdv_t=(i_t-f(v_t))dt,\ \sigma(v_t, i_t)>0
\end{cases}
\end{equation}
where the noise term in the first equation represents the random perturbation to the voltage $E$, $i_t$ and $v_t$ are the current and the voltage in the circuit at the time $t$, respectively, $f$ is continuously differentiable.
\end{example}

Using the homeomorphic transformation $H: \mathbb{R}^2\rightarrow \mathbb{R}^2$ defined by
\begin{equation}\label{HT1}
H(v,i):\begin{cases}
    v=v, \\
    w=C^{-1}(i-f(v)),
\end{cases}
\end{equation}
the system (\ref{1DELEs}) is transformed into
  the following stochastic second-order equation
  \begin{equation}\label{1DVanDuffing}
\begin{cases}
    dv=wdt, \\
    dw=-C^{-1}[\left(f'(v)+\frac{CR}{L}\right)w +\frac{R}{L}f(v)+\frac{v-E}{L}]dt+\frac{\sqrt{\varepsilon}}{LC}\sigma\big(v,Cw+f(v)\big)d{B}_t.
\end{cases}
\end{equation}
 For $x,y,p,q\in \mathbb{R}^2$, let
 $$\mathbf{A^{{\rm 20}}}\mathbf{C}_T:=\{(v(\cdot),i(\cdot))\in \mathbf{A}\mathbf{C}_T |\ \ C\dot{v}(\cdot) = i(\cdot)-f(v(\cdot))\},$$
  $$\mathbf{A^{{\rm 20}}}\mathbf{C}_T^{x}:=\{(v(\cdot),i(\cdot))\in \mathbf{A}\mathbf{C}_T |\ (v(0),i(0))=x,\ C\dot{v}(\cdot) = i(\cdot)-f(v(\cdot))\},$$
  $$\mathbf{A^{{\rm 20}}}\mathbf{C}_T^{x,y}:=\{(v(\cdot),i(\cdot))\in \mathbf{A}\mathbf{C}_T |\ (v(0),i(0))=x,\ (v(T),i(T))=y,\ C\dot{v}(\cdot) = i(\cdot)-f(v(\cdot))\},$$
  $$\mathbf{A^{{\rm 22}}}\mathbf{C}_T:=\{(v(\cdot),w(\cdot))\in \mathbf{A}\mathbf{C}_T |\ \ w(\cdot)=\dot{v}(\cdot) \},$$
  $$\mathbf{A^{{\rm 22}}}\mathbf{C}_T^{p}:=\{(v(\cdot),w(\cdot))\in \mathbf{A}\mathbf{C}_T |\ (v(0),w(0))=p,\ w(\cdot)=\dot{v}(\cdot)\},$$
  $$\mathbf{A^{{\rm 22}}}\mathbf{C}_T^{p,q}:=\{(v(\cdot),w(\cdot))\in \mathbf{A}\mathbf{C}_T |\ (v(0),w(0))=p,\ (v(T),w(T))=q,\ w(\cdot)=\dot{v}(\cdot)\}.$$
  The homeomorphic transformation (\ref{HT1}) induces the mappings $\mathcal{H}_T:\mathbf{A^{{\rm 20}}}\mathbf{C}_T\rightarrow \mathbf{A^{{\rm 22}}}\mathbf{C}_T$, $\mathcal{H}_T:\mathbf{A^{{\rm 20}}}\mathbf{C}_T^x\rightarrow \mathbf{A^{{\rm 22}}}\mathbf{C}_T^{H(x)}$ and $\mathcal{H}_T:\mathbf{A^{{\rm 20}}}\mathbf{C}_T^{x,y}\rightarrow \mathbf{A^{{\rm 22}}}\mathbf{C}_T^{H(x),H(y)}$.  It is easy to check that $\mathcal{H}_T:\mathbf{A^{{\rm 20}}}\mathbf{C}_T\rightarrow \mathbf{A^{{\rm 22}}}\mathbf{C}_T$ is a one-to-one mapping. Therefore, both $\mathcal{H}_T:\mathbf{A^{{\rm 20}}}\mathbf{C}_T^x\rightarrow \mathbf{A^{{\rm 22}}}\mathbf{C}_T^{H(x)}$ and $\mathcal{H}_T:\mathbf{A^{{\rm 20}}}\mathbf{C}_T^{x,y}\rightarrow \mathbf{A^{{\rm 22}}}\mathbf{C}_T^{H(x),H(y)}$ are also one-to-one. Now let us define the norm on the space ${\bf AC}_T$ by
$$\|f\|:=\|f\|_{C^0}+\|f'\|_{L^1}\ {\rm for}\ f\in {\bf AC}_T$$
where $\|f\|_{C^0}:={\rm max}_{0\le t\le T}|f(t)|$ and $\|f'\|_{L^1} :=\int_0^T|f'(t)|dt$. Thus we have
\begin{lem}\label{BS}
The space $({\bf AC}_T, \|\cdot\|)$ is a Banach space. The metric spaces $\mathbf{A^{{\rm 20}}}\mathbf{C}_T^x$, $\mathbf{A^{{\rm 20}}}\mathbf{C}_T^{x,y}$, $\mathbf{A^{{\rm 22}}}\mathbf{C}_T^p$ and $\mathbf{A^{{\rm 22}}}\mathbf{C}_T^{p,q}$ are all complete under the metric induced by $\|\cdot\|$.
\end{lem}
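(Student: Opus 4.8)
The plan is to first verify that $({\bf AC}_T,\|\cdot\|)$ is a Banach space, and then to realize each of the four sets $\mathbf{A^{{\rm 20}}}\mathbf{C}_T^x$, $\mathbf{A^{{\rm 20}}}\mathbf{C}_T^{x,y}$, $\mathbf{A^{{\rm 22}}}\mathbf{C}_T^p$ and $\mathbf{A^{{\rm 22}}}\mathbf{C}_T^{p,q}$ as a \emph{closed} subset of $({\bf AC}_T,\|\cdot\|)$; completeness of the four metric spaces is then automatic, since a closed subset of a complete metric space is complete under the restricted metric.

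For the first assertion, I would note that $\|\cdot\|$ is a norm essentially for free: homogeneity and the triangle inequality split additively over the two summands, and $\|f\|=0$ forces $\|f\|_{C^0}=0$, hence $f\equiv0$. For completeness, take a Cauchy sequence $\{f_n\}\subset{\bf AC}_T$. Then $\{f_n\}$ is Cauchy in $(C([0,T];\RR^d),\|\cdot\|_{C^0})$ and $\{f_n'\}$ is Cauchy in $L^1([0,T];\RR^d)$, so there are $f\in C([0,T];\RR^d)$ and $g\in L^1([0,T];\RR^d)$ with $f_n\to f$ uniformly and $f_n'\to g$ in $L^1$. Since each $f_n$ is absolutely continuous, $f_n(t)=f_n(0)+\int_0^t f_n'(s)\,ds$ for every $t$; letting $n\to\infty$ and using $\big|\int_0^t f_n'(s)\,ds-\int_0^t g(s)\,ds\big|\le\|f_n'-g\|_{L^1}\to0$ yields $f(t)=f(0)+\int_0^t g(s)\,ds$, so $f\in{\bf AC}_T$ with $f'=g$ a.e.\ and $\|f_n-f\|=\|f_n-f\|_{C^0}+\|f_n'-g\|_{L^1}\to0$. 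This proves $({\bf AC}_T,\|\cdot\|)$ is Banach.

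For the subspaces I would treat $\mathbf{A^{{\rm 20}}}\mathbf{C}_T^x$ in detail and note that the remaining three are identical. Each of these sets is carved out of ${\bf AC}_T$ by \emph{endpoint} conditions — here $(v(0),i(0))=x$, and additionally $(v(T),i(T))=y$ in the $(x,y)$ case — together with one \emph{pointwise relation}: $C\dot v(\cdot)=i(\cdot)-f(v(\cdot))$ for the $\mathbf{A^{{\rm 20}}}$-spaces and $w(\cdot)=\dot v(\cdot)$ for the $\mathbf{A^{{\rm 22}}}$-spaces. Let $(v_n,i_n)\in\mathbf{A^{{\rm 20}}}\mathbf{C}_T^x$ with $(v_n,i_n)\to(v,i)$ in ${\bf AC}_T$. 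Since $\|\cdot\|\ge\|\cdot\|_{C^0}$, the convergence is in particular uniform, so $(v_n(0),i_n(0))\to(v(0),i(0))$, giving $(v(0),i(0))=x$ (and similarly at $t=T$ in the $(x,y)$ case). For the pointwise relation, $\dot v_n\to\dot v$ in $L^1$ and $i_n\to i$ uniformly, while the ranges of $v_n$ lie in a common compact set and $v_n\to v$ uniformly, so by (uniform) continuity of $f$ one gets $f(v_n)\to f(v)$ uniformly, hence in $L^1$; passing to the $L^1$-limit in the a.e.\ identity $C\dot v_n=i_n-f(v_n)$ gives $C\dot v=i-f(v)$ a.e. Hence $(v,i)\in\mathbf{A^{{\rm 20}}}\mathbf{C}_T^x$, so the set is closed, and the same verbatim argument (with $w=\dot v$ replacing the diode relation) handles $\mathbf{A^{{\rm 22}}}\mathbf{C}_T^p$ and $\mathbf{A^{{\rm 22}}}\mathbf{C}_T^{p,q}$.

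The only step with any content is the passage to the limit in the nonlinear term $f(v_n)$; this is precisely where the $\|\cdot\|_{C^0}$ part of the norm is used, since $L^1$-control of $\dot v_n$ alone would not control $f(v_n)$. Everything else is a routine application of the fundamental theorem of calculus for absolutely continuous functions and of the stability of affine endpoint constraints under uniform convergence.
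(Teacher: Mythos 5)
Your proof is correct and takes essentially the same approach as the paper's: you prove $({\bf AC}_T,\|\cdot\|)$ is Banach by splitting the Cauchy condition into the $C^0$ and $L^1$ components and recovering absolute continuity of the limit via the Newton--Leibniz formula, and you then deduce completeness of the four subspaces from their closedness in ${\bf AC}_T$. The only difference is that the paper simply asserts the closedness of $\mathbf{A^{{\rm 20}}}\mathbf{C}_T^x$, $\mathbf{A^{{\rm 20}}}\mathbf{C}_T^{x,y}$, $\mathbf{A^{{\rm 22}}}\mathbf{C}_T^p$, $\mathbf{A^{{\rm 22}}}\mathbf{C}_T^{p,q}$ without argument, whereas you carry out the verification, including the genuinely nontrivial step of passing to the limit in the nonlinear term $f(v_n)$ using the uniform boundedness of the $v_n$ supplied by the $\|\cdot\|_{C^0}$ part of the norm; that added detail is a welcome improvement, not a divergence in method.
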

\begin{proof}
Let $\{f_n\}\subset {\bf AC}_T$ be a Cauchy sequence. Then both $\{f_n\}\subset {\bf C}_T$ and $\{f_n'\}\subset L^1([0,T])$  are Cauchy sequences. Therefore, from the completeness of the spaces $({\bf C}_T,\|\cdot\|_{C^0} )$ and $(L^1([0,T]),\|\cdot\|_{L^1} )$ it follows that there exist $f\in {\bf C}_T$ and $g\in L^1([0,T])$ such that $\|f_n-f\|_{C^0}\rightarrow 0$ and $\|f_n'-g\|_{L^1}\rightarrow 0$ as $n\rightarrow \infty$. Applying the Newton-Leibnitz formula, we get that
$$f_n(t)=f_n(0)+\int_0^tf_n'(s)ds,\ {\rm for\ all}\ t\in[0,T].$$
Letting $n\rightarrow \infty$ in the above equality, we have that
$$f(t)=f(0)+\int_0^tg(s)ds,\ {\rm for\ all}\ t\in[0,T],$$
which implies that $f\in {\bf AC}_T$. This proves that the space $({\bf AC}_T, \|\cdot\|)$ is a Banach space.

Because the spaces $\mathbf{A^{{\rm 20}}}\mathbf{C}_T^x$, $\mathbf{A^{{\rm 20}}}\mathbf{C}_T^{x,y}$, $\mathbf{A^{{\rm 22}}}\mathbf{C}_T^p$ and $\mathbf{A^{{\rm 22}}}\mathbf{C}_T^{p,q}$ are closed in ${\bf AC}_T$, they are all complete.
\end{proof}

Using the definition  given in (\ref{rate-0}),
the rate function of system (\ref{1DELEs}) is given by
\begin{align}
  \mathcal{I}^{T}_{(v,i)}(v(\cdot),i(\cdot))
& =
\left\{
\begin{array}{ll}
\frac{1}{2}\int\limits_{0}^{T}\Big|\frac{L\dot{i}(s)-(E-v(s)-Ri(s))}{L^2C^2\sigma( v(s),i(s))}\Big|^2ds,   & \hbox{if} \begin{array}{c}
(v(\cdot),i(\cdot))\in \mathbf{A}^{\rm 20}\mathbf{C}_T^{(v,i)},	\\
	\end{array} \\
+\infty, & \hbox{otherwise.}
\end{array}
\right. \label {actfalini0}\\
&=\left\{
\begin{array}{ll}
\frac{1}{2}\int\limits_{0}^{T}\Big|\frac{LC\dot{w}(s) +L f'(v(s))w(s)-(E-v(s)-RCw(s)-Rf(v(s)))}{L^2C^2\sigma(v(s),Cw(s)+f(v(s)))}\Big|^2ds, & \hbox{if}
\ (v(\cdot),w(\cdot))\in \mathbf{A}^{\rm 22}\mathbf{C}_T^{(v,w)},	 \\
+\infty, & \hbox{otherwise.}
\end{array}
\right. \label{actfalini}
\end{align}
Let $\mathcal{J}^{T}_{(v,w)}(v(\cdot),w(\cdot))$ denote  the rate function of the second-order equation (\ref{1DVanDuffing}). Denote by $\mathbb{V}^{20}$ and $\mathbb{V}^{22}$ the quasipotentials of (\ref{1DELEs})  and (\ref{1DVanDuffing}), respectively. Then we summarize the relations between the solutions, the rate functions, the quasipotentials  and invariant measures between (\ref{1DELEs})  and (\ref{1DVanDuffing}) as follows.

\begin{lem}\label{Relation2022}
{\rm (i)} Let $(v(t,\omega,v_0,i_0), i(t,\omega,v_0,i_0))$ and $(v(t,\omega,v_0,w_0), w(t,\omega,v_0,w_0))$ be the solutions of {\rm(\ref{1DELEs})} and {\rm(\ref{1DVanDuffing})} passing through $(v_0,i_0)$ and $(v_0,w_0)$, respectively. Then
\begin{equation}\label{SoluRe}
\begin{cases}
   (v(t,\omega,H(v_0,i_0)), w(t,\omega,H(v_0,i_0)))= H(v(t,\omega,v_0,i_0), i(t,\omega,v_0,i_0)), \\
   {\rm for}\ {\rm all}\ (t,\omega,(v_0,i_0))\in \mathbb{R}_+\times \Omega\times \mathbb{R}^2.
\end{cases}
\end{equation}

{\rm (ii)}  The mapping $\mathcal{H}_T:\mathbf{A^{{\rm 20}}}\mathbf{C}_T\rightarrow \mathbf{A^{{\rm 22}}}\mathbf{C}_T$ is a homeomorphism. Besides, $\mathcal{H}_T:\mathbf{A^{{\rm 20}}}\mathbf{C}_T^x\rightarrow \mathbf{A^{{\rm 22}}}\mathbf{C}_T^{H(x)}$ and $\mathcal{H}_T:\mathbf{A^{{\rm 20}}}\mathbf{C}_T^{x,y}\rightarrow \mathbf{A^{{\rm 22}}}\mathbf{C}_T^{H(x),H(y)}$ are homeomorphisms for all $x,y\in \mathbb{R}^2$ and $T>0$.

{\rm (iii)} $\mathcal{J}^{T}_{H(v,i)}(\mathcal{H}_T(v(\cdot),i(\cdot)))=\mathcal{I}^{T}_{(v,i)}(v(\cdot),i(\cdot))$ for all $(v,i)\in \mathbb{R}^2$ and  $(v(\cdot),i(\cdot))\in \mathbf{A}^{\rm 20}\mathbf{C}_T^{(v,i)}.$

{\rm (iv)} $\mathbb{V}^{22}(H(x),H(y))= \mathbb{V}^{20}(x,y)$ for all $x,y\in\mathbb{R}^2$.

{\rm (v)} If $\mu^{\varepsilon}$ is an invariant measure of {\rm(\ref{1DELEs})} if and only if  $\mu^{\varepsilon}\circ H^{-1}$ is the  invariant measure of {\rm(\ref{1DVanDuffing})}.

\end{lem}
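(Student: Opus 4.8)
The plan is to prove the five assertions in order, with (i) carrying all the weight: once the two diffusions are pathwise conjugate through $H$, items (ii)--(v) follow by soft arguments. For (i) I would apply It\^o's formula to $w_t := C^{-1}\big(i_t - f(v_t)\big)$ along a solution $(v_t,i_t)=(v(t,\omega,v_0,i_0),i(t,\omega,v_0,i_0))$ of \eqref{1DELEs}. The second equation of \eqref{1DELEs} shows that $t\mapsto v_t$ is absolutely continuous and of finite variation, so $f(v_t)$ acquires no It\^o correction term and the ordinary chain rule handles it; this yields $dv_t = C^{-1}(i_t-f(v_t))\,dt = w_t\,dt$ and, after inserting $i_t = Cw_t + f(v_t)$ into the equation for $di_t$, exactly the drift and diffusion coefficients of \eqref{1DVanDuffing}. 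Since the coefficients of both systems are locally Lipschitz, strong uniqueness holds and forces $\big(v(t,\omega,H(v_0,i_0)),w(t,\omega,H(v_0,i_0))\big) = H\big(v(t,\omega,v_0,i_0),i(t,\omega,v_0,i_0)\big)$ for all $t,\omega$; running the same computation with $H^{-1}(v,w)=(v,Cw+f(v))$ and It\^o's formula for $i_t = Cw_t+f(v_t)$ gives the reverse inclusion, establishing \eqref{SoluRe}.

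For (ii) the set-theoretic bijectivity of $\mathcal{H}_T\colon \mathbf{A^{{\rm 20}}}\mathbf{C}_T\to\mathbf{A^{{\rm 22}}}\mathbf{C}_T$ has already been recorded, so it remains to verify bicontinuity in the norm $\|f\|=\|f\|_{C^0}+\|f'\|_{L^1}$ of Lemma \ref{BS}. I would argue that if $(v_n,i_n)\to(v,i)$ in $\mathbf{A^{{\rm 20}}}\mathbf{C}_T$ then $\{v_n\}$ is uniformly bounded, hence $f(v_n)\to f(v)$ and $f'(v_n)\to f'(v)$ uniformly by the $C^1$-regularity of $f$ on compacts; consequently $w_n := C^{-1}(i_n-f(v_n))\to w := C^{-1}(i-f(v))$ in $C^0$, while $\dot w_n = C^{-1}\big(\dot i_n - f'(v_n)\dot v_n\big) = C^{-1}\big(\dot i_n - f'(v_n)w_n\big)$ converges to $\dot w$ in $L^1$ because $\dot i_n\to\dot i$ in $L^1$ and $f'(v_n)w_n\to f'(v)w$ uniformly. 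Applying the same estimate to the second component $Cw+f(v)$ of $H^{-1}$ gives continuity of $\mathcal{H}_T^{-1}$, and restricting to the closed affine subspaces of paths with prescribed initial (resp.\ initial and terminal) value yields the homeomorphisms $\mathbf{A^{{\rm 20}}}\mathbf{C}_T^x\to\mathbf{A^{{\rm 22}}}\mathbf{C}_T^{H(x)}$ and $\mathbf{A^{{\rm 20}}}\mathbf{C}_T^{x,y}\to\mathbf{A^{{\rm 22}}}\mathbf{C}_T^{H(x),H(y)}$.

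Item (iii) is a direct substitution. On $(v(\cdot),i(\cdot))\in\mathbf{A^{{\rm 20}}}\mathbf{C}_T^{(v,i)}$ one has $i=Cw+f(v)$ and $\dot i = C\dot w + f'(v)w$ with $(v(\cdot),w(\cdot))=\mathcal{H}_T(v(\cdot),i(\cdot))$; plugging these into the numerator $L\dot i(s)-(E-v(s)-Ri(s))$ of $\mathcal{I}^{T}_{(v,i)}$ reproduces exactly the numerator of the $(v,w)$-form of the action functional obtained by applying the rate-function formula of the second-order equation \eqref{1DVanDuffing}, while $\sigma(v(s),i(s))=\sigma(v(s),Cw(s)+f(v(s)))$ matches the two denominators, so the integrands agree pointwise and $\mathcal{J}^{T}_{H(v,i)}(\mathcal{H}_T(v(\cdot),i(\cdot)))=\mathcal{I}^{T}_{(v,i)}(v(\cdot),i(\cdot))$. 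Item (iv) is then immediate: for each fixed $T>0$, (ii) makes $\mathcal{H}_T$ a bijection of $\mathbf{A^{{\rm 20}}}\mathbf{C}_T^{x,y}$ onto $\mathbf{A^{{\rm 22}}}\mathbf{C}_T^{H(x),H(y)}$ and (iii) shows it preserves the action, so the infimum problems defining $\mathbb{V}^{20}(x,y)$ and $\mathbb{V}^{22}(H(x),H(y))$ over paths on $[0,T]$ have equal value; taking the infimum over $T>0$ gives $\mathbb{V}^{22}(H(x),H(y))=\mathbb{V}^{20}(x,y)$.

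Finally, for (v), write $X^{x}_t$ and $Y^{p}_t$ for the solutions of \eqref{1DELEs} and \eqref{1DVanDuffing} started at $x$ and $p$, and let $P_t,Q_t$ be the corresponding Markov transition semigroups. By \eqref{SoluRe}, $Q_tg(H(x)) = \EE\big[g(Y^{H(x)}_t)\big] = \EE\big[g(H(X^{x}_t))\big] = \EE\big[(g\circ H)(X^{x}_t)\big] = P_t(g\circ H)(x)$ for every bounded measurable $g$, i.e.\ $(Q_tg)\circ H = P_t(g\circ H)$. Testing the stationarity identity $\int P_t\phi\,d\mu^{\varepsilon} = \int \phi\,d\mu^{\varepsilon}$ against $\phi=g\circ H$ and using that $g\mapsto g\circ H$ is a bijection of the bounded measurable functions (because $H$ is a homeomorphism) turns it into $\int Q_tg\,d(\mu^{\varepsilon}\circ H^{-1}) = \int g\,d(\mu^{\varepsilon}\circ H^{-1})$, so $\mu^{\varepsilon}$ is $(P_t)$-invariant if and only if $\mu^{\varepsilon}\circ H^{-1}$ is $(Q_t)$-invariant. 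Among the five parts I expect (ii) to be the only genuinely technical one --- precisely the $L^1$-control of $\dot w_n$, where the constraint $C\dot v = i-f(v)$ and the local $C^1$-regularity of $f$ have to be used together; everything else is bookkeeping on top of It\^o's formula and the explicit rate-function formulas already displayed above.
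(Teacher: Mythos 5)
Your proposal is correct and follows essentially the same route as the paper: (i) via It\^o's formula on $w_t = C^{-1}(i_t - f(v_t))$ combined with strong uniqueness, (ii) via the explicit estimate in the norm $\|f\|=\|f\|_{C^0}+\|f'\|_{L^1}$ using local $C^1$-regularity of $f$, (iii) by direct substitution $i=Cw+f(v)$, $\dot i = C\dot w + f'(v)w$ into the rate function, and (iv) by combining (ii) and (iii). Two small presentational differences worth noting: in (ii) you simplify $\dot w_n = C^{-1}(\dot i_n - f'(v_n)\dot v_n)$ using the constraint $\dot v_n = w_n$ before estimating, whereas the paper keeps $v_n'$ and splits $f'(v_n)v_n' - f'(v_0)v_0'$ into two terms, but the two computations are equivalent. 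More substantively, for (v) the paper simply cites an external result (Theorem 2.6 of Chen--Dong--Jiang--Niu--Zhai), whereas you give a short self-contained argument through the semigroup conjugacy $(Q_t g)\circ H = P_t(g\circ H)$; your version is slightly more informative and just as correct.
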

\begin{proof}
(i) follows immediately from the transformation $H$ and the It\^{o} formula. (iii) follows from (\ref{actfalini0}) and (\ref{actfalini}).

In order to prove (ii), let $\{(v_n(\cdot),i_n(\cdot))\}_{n=0}^{\infty}\subset \mathbf{A^{{\rm 20}}}\mathbf{C}_T$ such that $\|v_n(\cdot)-v_0(\cdot)\|\rightarrow 0$ and $\|i_n(\cdot)-i_0(\cdot)\|\rightarrow 0$ as $n\rightarrow \infty$, then we need to prove that $\|w_n(\cdot)-w_0(\cdot)\|\rightarrow 0$ as $\rightarrow \infty$ with $w_n(\cdot)=C^{-1}(i_n(\cdot)-f(v_n(\cdot)))$ and $w_0(\cdot)=C^{-1}(i_0(\cdot)-f(v_0(\cdot)))$. In fact, it follows from $\lim_{n\rightarrow \infty}\|v_n(\cdot)-v_0(\cdot)\|= 0$ that there exist constants $K>0, L>0$ such that  $|v_n(t)|\le K$ for all $t\in [0,T]$ and $n=0, 1, 2, \cdots$ and $|f'(s)|\le L$ for all $|s|\le K$.
\begin{align*}
\|w_n(\cdot)-w_0(\cdot)\|&\le C^{-1}\|i_n(\cdot)-i_0(\cdot)\|+ C^{-1}\|f(v_n(\cdot))-f(v_0(\cdot))\|_{C^0} \\
& +C^{-1}\|f'(v_n(\cdot))v_n'(\cdot)-f'(v_0(\cdot))v_0'(\cdot)\|_{L^1}\\
&\le C^{-1}\|i_n(\cdot)-i_0(\cdot)\|+ C^{-1}L\|v_n(\cdot)-v_0(\cdot)\|_{C^0} \\
& +C^{-1}\|f'(v_n(\cdot))(v_n'(\cdot)-v_0'(\cdot))+(f'(v_n(\cdot))-f'(v_0(\cdot)))v_0'(\cdot)\|_{L^1}\\
&\le C^{-1}\|i_n(\cdot)-i_0(\cdot)\|+ C^{-1}L\|v_n(\cdot)-v_0(\cdot)\| \\
&+\|f'(v_n(\cdot))-f'(v_0(\cdot))\|_{C^0}\|v_0'(\cdot)\|_{L^1}\rightarrow 0\ {\rm as}\ n\rightarrow \infty.\\
\end{align*}
This proves that $\lim_{n\rightarrow \infty}\mathcal{H}_T(v_n(\cdot),i_n(\cdot))=\mathcal{H}_T(v_0(\cdot),i_0(\cdot))$. Similarly, we can show that $$\lim_{n\rightarrow \infty}\mathcal{H}_T^{-1}(v_n(\cdot),w_n(\cdot))=\mathcal{H}_T^{-1}(v_0(\cdot),w_0(\cdot)).$$

Now we prove (iv). For any fixed points $x,y\in \mathbb{R}^2$, by the definition of quasipotential,
\begin{align*}
\mathbb{V}^{22}(H(x),H(y))&= {\rm inf}\{\mathcal{J}^T_{H(x)}(v(\cdot), w(\cdot)) |\ (v(\cdot), w(\cdot))\in \mathbf{A^{{\rm 22}}}\mathbf{C}_T^{H(x),H(y)},\ T>0\} \\
& = {\rm inf}\{\mathcal{J}^T_{H(x)}(\mathcal{H}_T(v(\cdot), i(\cdot))) |\ (v(\cdot), i(\cdot))\in \mathbf{A^{{\rm 20}}}\mathbf{C}_T^{x,y},\ T>0\} \\
&  = {\rm inf}\{\mathcal{I}^T_{x}((v(\cdot), i(\cdot))) |\ (v(\cdot), i(\cdot))\in \mathbf{A^{{\rm 20}}}\mathbf{C}_T^{x,y},\ T>0\} \\
& =\mathbb{V}^{20}(x,y).
\end{align*}
The second and third equalities have used (ii) and (iii) respectively. (iv) is proved.

(v) follows from (\ref{SoluRe}) and \cite[Theorem 2.6]{DJNZ}.
\end{proof}
\vskip 0.2cm

Take $\ f(v)=(v-E)^3-(v-E)$ and replace $v - E$ by $v$. Then the unperturbed equation of (\ref{1DVanDuffing}) is the following Li\'{e}nard equation
\begin{equation}\label{2DVanDuffing}
\begin{cases}
    \dot{v}=w, \\
    \dot{w}=-C^{-1}[\left(3v^2-1+\frac{CR}{L}\right)w +\frac{R}{L}(v^3-v)+\frac{v}{L}].
\end{cases}
\end{equation}

Let $V(v,i)= Li^2 +Cv^2$ be a Lyapunov function for (\ref{1DELEs}) and  assume that $\si$ is a locally  Lipschitz continuous function satisfying $0<\si^2(v,i) \leq c_1(v^4 + i^2+1)$, $c_1 >0$. Then,  the left-hand side of the condition \eqref{22} with $\theta=\frac{L}{4c_1}$ and $\eta=\frac{16c_1}{L}$ is
\begin{align}\label{4.2-J}
J&:=-2Ri^2+2Ei-2v(v-E)^3+2v(v-E)+\frac{\theta}{L} \si^2(v,i) + \frac{4i^2 \si^2(v,i)}{\et V(v,i)} \notag\\
&\leq -2Ri^2+2Ei-2v(v-E)^3+2v(v-E) +\frac{1}{2}(i^2 +v^4+1),
\end{align}
for all $(v,i) \in \RR^2$.
Since  $\lim_{|v| \rightarrow \infty} \frac{-2v(v-E)^3+2v(v-E)}{v^4} = -2$, \eqref{4.2-J} implies that $J\le C^*(1+V(v,i)$ for some large constant $C^*$, that is, \eqref{22} holds. On the other hand, the conditions \eqref{As b V} and \eqref{23} are obviously satisfied. It follows from Theorem \ref{thmULDP} that (\ref{1DELEs}) admits ULDP.

Let $\varepsilon_0 =\frac{L}{4c_1}(R \wedge 1) $. It can also be seen from \eqref{4.2-J} that
 there exist positive constants $k,M$ such that
\begin{align*}
\mathcal{L}^{\varepsilon}V(v,i)&:=-2Ri^2+2Ei-2v(v-E)^3+2v(v-E)+\frac{\varepsilon}{L} \si^2(v,i)\\
&\le -k(i^2+v^4) \ {\rm for}\ 0<\varepsilon \le \varepsilon_0,   i^2+v^2\ge M,
\end{align*}
where $\mathcal{L}^{\varepsilon}$ is the generator of the system (\ref{1DELEs}).
Denote by $\mathcal{J^{\varepsilon}}$ the set of all
 invariant measures of (\ref{1DELEs}) for a given $0<\varepsilon \le \varepsilon_0$. By \cite [Theorem 3.1]{Chen2020}, $\mathcal{J^{\varepsilon}}$ is nonempty and $\mathcal{J}:=\bigcup_{0<\varepsilon \le \varepsilon_0}\mathcal{J^{\varepsilon}}$ is tight. Furthermore, let $\mu^{\varepsilon_i}\in \mathcal{J}^{\varepsilon_i}$ such that $\mu^{\varepsilon_i}$  converges weakly to $\mu$ as $\varepsilon_i\rightarrow 0$. Then $\mu$ is an invariant measure of the  unperturbed equations of (\ref{1DELEs}):
 \begin{equation}\label{Unp1DVanDuffing}
\begin{cases}
   L\frac{di}{dt}= E-Ri-v, \\
     C\frac{dv}{dt}=i-(v-E)^3+(v-E)
\end{cases}
\end{equation}
and its support is contained in the Birkhoff center of (\ref{Unp1DVanDuffing}) (see \cite{Chen2020}).



The details are contained in the following result.
\begin{prop}\label{SDC}
Suppose that $\si(v,i)$ is a locally Lipschitz, positive  continuous function with $\si^2(v,i) \leq c_1(v^4 + i^2 +1)$, $c_1 >0$. Then there exists $\varepsilon_0>0$ such that for any given $\varepsilon\in (0,\varepsilon_0]$, the set $\mathcal{J^{\varepsilon}}$ of  invariant measures of the system {\rm(\ref{1DELEs})} is nonempty and $\mathcal{J}:=\bigcup_{0<\varepsilon \le \varepsilon_0}\mathcal{J^{\varepsilon}}$ is tight. Furthermore, let $\mu^{\varepsilon_i}\in \mathcal{J}^{\varepsilon_i}$ such that $\mu^{\varepsilon_i}$  converges weakly to $\mu$ as $\varepsilon_i\rightarrow 0$. Then $\mu$ is an invariant measure of  {\rm(\ref{Unp1DVanDuffing})}
and its support is contained in the Birkhoff center of the system {\rm(\ref{Unp1DVanDuffing})}. More precisely,\\
{\rm(i)} if $\frac{CR}{L}\ge 1$ and $0<R\le 1$, then when $\varepsilon\rightarrow 0$ the invariant measure $\mu^{\varepsilon}$ of {\rm (\ref{1DELEs})}  converges weakly to  $\delta_{(E,0)}(\cdot)$;\\
{\rm(ii)} if $\frac{CR}{L}\ge 1$ and $R>1$, then when $\varepsilon\rightarrow 0$ the invariant measure $\mu^{\varepsilon}$ of {\rm (\ref{1DELEs})}  converges weakly to  $\lambda\delta_{(E+ \sqrt{1-R^{-1}} ,- R^{-1}\sqrt{1-R^{-1}})}(\cdot)+(1-\lambda)\delta_{(E- \sqrt{1-R^{-1}} , R^{-1}\sqrt{1-R^{-1}})}(\cdot)$ with $0\le \lambda \le 1$;\\
{\rm(iii)} if $\frac{CR}{L}< 1$ and $0<R\le 1$, then when $\varepsilon\rightarrow 0$ the invariant measure $\mu^{\varepsilon}$ of {\rm (\ref{1DELEs})}  converges weakly to $\mu_{\Gamma}$, where $\mu_{\Gamma}$ is the Haar measure supported on the unique limit cycle $\Gamma$, see Figure {\rm \ref{fig5}}.\\

\end{prop}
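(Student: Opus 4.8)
The plan is to transport the whole problem, via the homeomorphism of Lemma~\ref{Relation2022}, to the second-order system (\ref{1DVanDuffing}), and, after the translation $v-E\mapsto v$, to the Li\'enard equation (\ref{2DVanDuffing}); I will abbreviate its damping and restoring terms by $\phi(v)=C^{-1}(3v^2-1+\tfrac{CR}{L})$ and $\psi(v)=(CL)^{-1}v(Rv^2+1-R)$, so that (\ref{2DVanDuffing}) reads $\ddot v+\phi(v)\dot v+\psi(v)=0$. Everything not concerning the Birkhoff center is already in place before the statement: the set $\mathcal J^\varepsilon$ is nonempty and $\mathcal J$ is tight, (\ref{1DELEs}) satisfies the ULDP of Theorem~\ref{thmULDP} (via the Lyapunov function $V(v,i)=Li^2+Cv^2$ and the estimate (\ref{4.2-J})), and any weak limit $\mu$ of $\mu^{\varepsilon_i}$ is an invariant measure of (\ref{Unp1DVanDuffing}) supported on its Birkhoff center (by \cite[Theorem~3.1]{Chen2020}). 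Hence it remains to (a) identify the Birkhoff center of (\ref{2DVanDuffing}) explicitly in the three parameter regimes and (b) show that no limiting mass sits on its unstable part; the identification of $\mu$ is then immediate, and Lemma~\ref{Relation2022}(iv)--(v) carries the answer back to (\ref{1DELEs}).

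For the phase portrait I would work with the proper energy $\mathcal{E}(v,w)=\tfrac12w^2+\int_0^v\psi(s)\,ds$, which satisfies $\dot{\mathcal{E}}=-\phi(v)w^2$ along the flow. The equilibria are the zeros of $\psi$: only $v=0$ when $0<R\le1$, and in addition $v=\pm s$ with $s=\sqrt{1-R^{-1}}$ when $R>1$; the roots of $\lambda^2+\phi(v)\lambda+\psi'(v)=0$ give the stability type at each of them. When $\frac{CR}{L}\ge1$ one has $\phi\ge0$, so $\mathcal{E}$ is nonincreasing and the LaSalle invariance principle (the largest invariant set inside $\{w=0\}\cup\{\phi(v)=0\}$ is exactly the equilibrium set) forces every trajectory to converge to an equilibrium; in particular (\ref{2DVanDuffing}) has no periodic orbit, so its Birkhoff center equals the equilibrium set. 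In case (i) ($\frac{CR}{L}\ge1$, $0<R\le1$) this set is the single point $0$, which is therefore globally asymptotically stable, so $\mu=\delta_{(E,0)}$ after transporting back. In case (iii) ($\frac{CR}{L}<1$, $0<R\le1$) one still has $v\psi(v)>0$ for $v\ne0$ while $\phi<0$ near $0$ and $\phi>0$ for large $|v|$: the classical van der Pol/Li\'enard existence-and-uniqueness theorem (as used for the deterministic van der Pol equation, cf.\ \cite{ZhangZhifen}) then yields a \emph{unique} stable limit cycle $\Gamma$ encircling the repelling origin, so the Birkhoff center of (\ref{2DVanDuffing}) is $\{0\}\cup\Gamma$.

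Case (ii) ($\frac{CR}{L}\ge1$, $R>1$) is the interesting one: the equilibria are the saddle $0$ and the two sinks $\pm s$, and the potential $v\mapsto\int_0^v\psi$ is a double well with maximum $0$ at $v=0$ and strictly negative minima at $v=\pm s$. Using the symmetry $(v,w)\mapsto(-v,-w)$ of (\ref{2DVanDuffing}) together with the strict decrease of $\mathcal{E}$ off $\{w=0\}$ (which rules out a homoclinic loop, since $\mathcal{E}$ would have to stay equal to $0$ along it), the two branches of the unstable manifold of the saddle run to the two \emph{different} sinks; thus $0$ together with these two connecting orbits is an acyclic saddle chain of exactly the kind treated in Theorem~\ref{saddle}/\cite[Theorem~3.1]{XCJ}, and the Birkhoff center is $\{0,s,-s\}$.

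It remains to kill the mass on the unstable pieces, and this is where the degenerate driving noise has to be handled. In case (iii) the origin is a repeller of (\ref{2DVanDuffing}), and I would argue exactly as in the proof of Proposition~\ref{SvdP}: combine the ULDP with Lemma~\ref{PropQ 0808} (finiteness of $\mathbb{V}$, its upper semicontinuity, hence continuity at the equilibrium, and the uniform transit-time bound of part (iii)) and the exit-time estimates of \cite{XCJ} to get $\mu^\varepsilon(U_0)\le e^{-\kappa/\varepsilon}$ on a neighbourhood $U_0$ of the origin; then $\mu$ is supported on $\Gamma$ and, being flow-invariant, must be $\mu_\Gamma$, which transports back through $H^{-1}$ to the Haar measure of the limit cycle in the $(v,i)$ plane, giving (iii). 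In case (ii) I would run the acyclic-saddle-chain argument of Theorem~\ref{saddle}/\cite[Theorem~3.1]{XCJ} with Lemma~\ref{PropQ 0808} substituted for the nondegeneracy hypothesis, obtaining $\mu^\varepsilon(U)\le e^{-\kappa/\varepsilon}$ on a neighbourhood $U$ of the saddle; then $\mu$ is carried by $\{s,-s\}$, hence $\mu=\lambda\delta+(1-\lambda)\delta$ with $\lambda\in[0,1]$, and $H^{-1}$ together with the translation turns $s,-s$ into the two displayed points, proving (ii). \textbf{The main obstacle} is precisely this last step: unlike a repeller, the saddle keeps a one-dimensional stable manifold along which the deterministic flow never leaves a neighbourhood, so the clean exit-time argument used for the origin does not apply verbatim; the real work is to build, for every starting point near the saddle, a control path of arbitrarily small action that is first kicked transversally off the stable manifold — possible because the noise vector $(0,\sigma)$ has a nonzero component along the unstable eigendirection of the saddle — and then carried by the drift to a sink, while at the same time showing that going from a sink back to the saddle costs a fixed positive amount of action. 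This is exactly the situation for which the delicate sample-path constructions underlying Lemma~\ref{PropQ 0808} were designed.
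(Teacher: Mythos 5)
Your proposal follows essentially the same roadmap as the paper: establish the ULDP for (\ref{1DELEs}) via the Lyapunov function $V(v,i)=Li^2+Cv^2$, transport everything to (\ref{2DVanDuffing}) through the homeomorphism $H$ of Lemma~\ref{Relation2022}, identify the Birkhoff center in each parameter regime, and then kill the mass on the unstable pieces (repeller in case (iii), saddle in case (ii)) using the ULDP together with the degenerate-noise substitute for quasipotential continuity provided by Lemma~\ref{PropQ 0808}. Your identification of the key obstacle in case (ii) — that the saddle has a stable manifold along which the deterministic flow does not escape, so the repeller argument of Proposition~\ref{SvdP} does not transfer verbatim — and your recognition that the resolution is precisely the control-path construction encapsulated in parts (ii)--(iii) of Lemma~\ref{PropQ 0808}, match the paper's actual proof, which builds a chain of low-action paths $x\to\mathcal{S}\to P\in W^u(\mathcal{S})\to\Psi_{T_2^0}(P)\to\mathcal{A}$ and then invokes the escape-time estimate of \cite[Theorem~3.2]{XCJ}.

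The one point where you take a genuinely different (and arguably cleaner) route is the deterministic phase-portrait analysis. For the regime $\tfrac{CR}{L}\ge1$ the paper simply cites Moser's convergence theorem \cite[Theorem~3.3]{Moser} to conclude that every orbit of (\ref{Unp1DVanDuffing}) tends to an equilibrium and that the Birkhoff center is the finite equilibrium set, whereas you run LaSalle on the physical energy $\mathcal{E}(v,w)=\tfrac12 w^2+\int_0^v\psi$ with $\dot{\mathcal{E}}=-\phi(v)w^2\le0$. Your version has the small advantage that it also rules out a homoclinic loop at the saddle (since $\mathcal{E}$ would have to remain constant there), a fact the paper leaves implicit in its Figure~\ref{portraits28} but which is actually needed to guarantee that $\omega(P)$ is one of the sinks for $P\in W^u(\mathcal{S})$ near $\mathcal{S}$. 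Both approaches reach the same conclusion, and neither is easier than the other once Moser is available; yours is just more self-contained.
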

\begin{proof}
As discussed above, (\ref{1DELEs}) admits ULDP under the condition on $\sigma(v,i)$. It is easy to see from (\ref{HT1}), (\ref{Unp1DVanDuffing}) and (\ref{2DVanDuffing}) that $E^*(v^*,i^*)$ is an equilibrium of (\ref{Unp1DVanDuffing}) if and only if $F^*(v^*,0)$  is an equilibrium of (\ref{2DVanDuffing}).
By (iv) of Lemma \ref{Relation2022},  the continuity of $H$ and (ii) of Lemma \ref {PropQ 0808},
$$\lim_{x\rightarrow E^*} \mathbb{V}^{20}(x,E^*)= \lim_{x\rightarrow E^*} \mathbb{V}^{22}(H(x),F^*)=\lim_{p\rightarrow F^*} \mathbb{V}^{22}(p,F^*)=\mathbb{V}^{22}(F^*,F^*)=0.$$
Similarly, $\lim_{y\rightarrow E^*} \mathbb{V}^{20}(E^*,y)=0$. Therefore, for any $\eta>0$, there exists $\delta_1>0$ such that $\mathbb{V}^{22}(p,F^*)<\eta$ for all $p\in \overline{B}_{\delta_1}(F^*)$. By (iii) of Lemma \ref {PropQ 0808}, there exists $T^*>0$ such that for any $p\in \overline{B}_{\delta_1}(F^*)$, there exist $T^p\le T^*$ and $\psi^p\in {\bf C}_{T^p}$ with $\psi^p(0)=p$ and $\psi^p(T^p)=F^*$ satisfying $\mathcal{J}^{T^p}_p(\psi^p)<\eta$.  Since $H$ is a homeomorphism on the plan $\mathbb{R}^2$, $H^{-1}(B_{\delta_1}(F^*))$ is a neighborhood of $E^*$. Choose $\delta>0$ sufficiently small so that $\overline{B}_{\delta}(E^*)\subset H^{-1}(B_{\delta_1}(F^*))$. For any $x\in \overline{B}_{\delta}(E^*)$, $p:=H(x)\in B_{\delta_1}(F^*)$. Thus, let $T^x=T^p$ and $\phi^x=\mathcal{H}_{T^x}^{-1}(\psi^{H(x)})$. Then by (iii) of Lemma \ref{Relation2022}, $\mathcal{I}_x^{T^x}(\phi^x)= \mathcal{J}^{T^p}_p(\psi^p)<\eta$ and $T^x\le T^*$ for any $x\in \overline{B}_{\delta}(E^*)$.

Suppose that $\frac{CR}{L}\ge 1$.  If $0<R\le 1$, then $(E,0)$ is the unique equilibrium of (\ref{Unp1DVanDuffing}). Assume that $R>1$. Then the equilibria of (\ref{Unp1DVanDuffing}) are $\mathcal{S}(E,0)$, $\mathcal{A}(E+ \sqrt{1-R^{-1}},- R^{-1}\sqrt{1-R^{-1}})$ and $\mathcal{B}(E- \sqrt{1-R^{-1}}), R^{-1}\sqrt{1-R^{-1}})$, $\mathcal{S}$ is a saddle, $\mathcal{A}$ and $\mathcal{B}$  are asymptotically stable. This shows that the equilibria of {\rm(\ref{Unp1DVanDuffing})} is finite in the both cases. It follows from \cite[Theorem 3.3]{Moser} that all solutions of {\rm(\ref{Unp1DVanDuffing})} are convergent to equilibria. Therefore the Birkhoff center of (\ref{Unp1DVanDuffing}) consists of equilibria. The global portraits of (\ref{Unp1DVanDuffing}) in this case are drawn in Figure \ref{portraits28}.
\begin{figure}[h]
  \centering
  \includegraphics[width=0.5\textwidth]{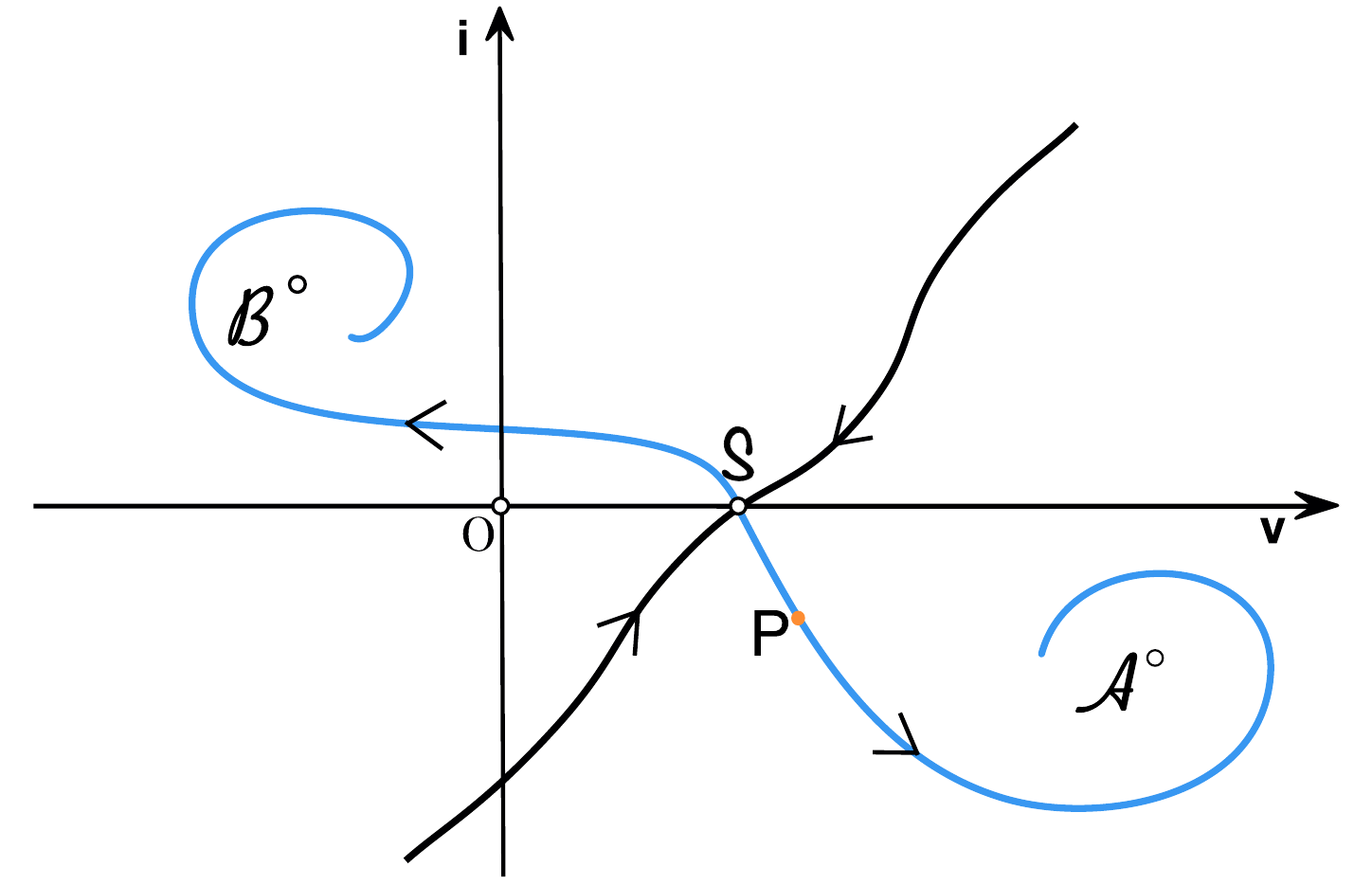}\\
  \caption{The phase portraits of system (\ref{Unp1DVanDuffing}).}\label{portraits28}
\end{figure}

(i) In this case, $(E,0)$ is the Birkhoff center of
 {\rm(\ref{Unp1DVanDuffing})}. Then the conclusion follows immediately.

(iii) Suppose that $\frac{CR}{L}< 1$ and $0<R\le 1$. Then $(E,0)$ is the unique equilibrium of {\rm(\ref{Unp1DVanDuffing})} which is a repeller. Since {\rm(\ref{Unp1DVanDuffing})} is equivalent to the Li\'{e}nard system (\ref{2DVanDuffing}) which has  the unique limit cycle $\Gamma$ illustrated in Figure \ref{fig5}, see \cite{ZhangZhifen}. Therefore, the system {\rm(\ref{Unp1DVanDuffing})} admits a unique limit cycle, still denoted by $\Gamma$. The Birkhoff center of {\rm(\ref{Unp1DVanDuffing})} is composed of the repeller $(E,0)$ and the unique stable limit cycle  $\Gamma$. Because {\rm(\ref{1DELEs})} admits ULDP, based on the results obtained in the first paragraph and preceded in the same manner as in the proof of Proposition \ref {SvdP}, we can conclude that there is no concentration for limiting measures on the repeller $(E,0)$. This proves that when $\varepsilon\rightarrow 0$ the invariant measure $\mu^{\varepsilon}$ of {\rm (\ref{1DELEs})} converges weakly to  the Haar measure $\mu_{\Gamma}$.

(ii) Suppose $\frac{CR}{L}\ge 1$ and $R>1$.  Then {\rm (\ref{1DELEs})} admits ULDP. We shall show that the limiting measures will not concentrate on the saddle $\mathcal{S}(E,0)$.

Let $\delta'< \frac{1}{2} \sqrt{(1-R^{-1})(1+R^{-2})}$ such that
$\mathcal{A}_{\delta'}$ is a fundamental neighborhood of $\mathcal{A}$. Note that
\cite [Lemma 4.2]{XCJ} still holds as soon as ULDP is satisfied. Thus by  Theorem \ref{thmULDP} and \cite [Lemma 4.2]{XCJ}, there exist $0<\delta_3<\delta_2<\delta^*_1<\delta_1
<\delta'$ and $s_0>0$ such that $\mathbb{V}\big(\partial \mathcal{A}_{\delta_2},
\partial \mathcal{A}_{\delta^*_1}\big) \geq s_0$.
Since $\mathcal{A}$ is an attractor, we obtain
$$T_0:=\inf\big\{u\geq 0:\Psi_{t}\big(\overline{\mathcal{A}_{\delta_1}}\big)\subset\mathcal{A}_{\delta_3}, t\geq u\big\}<+\infty.$$
The set $$F_0=\{\varphi\in {\bf C}_{T_0}:\varphi(0)\in \overline{\mathcal{A}_{\delta_1}},\varphi(T_0)\in (\mathcal{A}_{\delta_2})^c\}$$
 is a closed subset of ${\bf C}_{T_0}$. $F_0^0=\overline{\mathcal{A}_{\delta_1}}$ is compact and $F_0$ does not contain any solution of system (\ref{Unp1DVanDuffing}).
Thus, by Theorem \ref{thmULDP} and \cite [Proposition, 2.4]{XCJ}
we have
\[ s_1:= \inf\{\mathcal{I}^{T_0}_{x}(\varphi): \varphi\in F_0,\ x=\varphi(0)\in \overline{\mathcal{A}_{\delta_1}}\ \}>0.\]

  We claim that there exist $T_1>0$ and $ \tilde{\psi}\in {\bf C}_{T_1}$ with $
\tilde{\psi}(0)=\mathcal{S}$ and $\tilde{\psi}(T_1)= \mathcal{A}$ such that
\begin{equation}\label{R1}
\mathcal{I}_{\mathcal{S}}^{T_1}(\tilde{\psi})< \frac{1}{5}(s_0\wedge s_1).
\end{equation}

Since $\mathcal{S}$ is an equilibrium of {\rm(\ref{1DELEs})}, by the definition of quasipotential, $\mathbb{V}^{20}(\mathcal{S},\mathcal{S})=0$. The result in the first paragraph with $E^*=\mathcal{S}$ and $\eta=\frac{1}{10}(s_0\wedge s_1)$ shows that there is $0<\delta<\delta_3$ such that
\begin{equation}\label{R2}
\mathbb{V}^{20}(x,\mathcal{S}),\ \mathbb{V}^{20}(\mathcal{S},y)< \frac{1}{10}(s_0\wedge s_1)\ {\rm for\ all}\ x, y\in \overline{B_{\delta}(\mathcal{S})}.
\end{equation}
Let
$$W^u(\mathcal{S}):=\{x\in \mathbb{R}^2: \lim_{t\rightarrow -\infty}\Psi_t(x)=\mathcal{S}\}$$
be the unstable manifold of $\mathcal{S}$ and
$$W^s(\mathcal{S}):=\{x\in \mathbb{R}^2: \lim_{t\rightarrow \infty}\Psi_t(x)=\mathcal{S}\}$$
denote  the stable manifold of $\mathcal{S}$. Then $W^s(\mathcal{S})$ separates $\mathbb{R}^2$ into two parts which are the attracting domains of $\mathcal{A}$ and $\mathcal{B}$ respectively, see Figure \ref{portraits28}.

Choose $P\in B_{\delta}(\mathcal{S})\cap W^u(\mathcal{S})$ with $P \neq \mathcal{S}$. Then  $\mathbb{V}^{20}(\mathcal{S},P)< \frac{1}{10}(s_0\wedge s_1)$ by (\ref{R2}). By the definition of quasipotential, there exists $T_1^0$ and $\tilde{\psi}_1\in {\bf C}_{T_1^0}$ with $\tilde{\psi}_1(0)=\mathcal{S}$ and $\tilde{\psi}_1(T_1^0)=P$ such that
\begin{equation}\label{R3}
\mathcal{I}_{\mathcal{S}}^{T_1^0}(\tilde{\psi_1})< \frac{1}{10}(s_0\wedge s_1).
\end{equation}

Similarly, since $\mathcal{A}$ is an equilibrium of {\rm(\ref{1DELEs})}  and $\mathbb{V}^{20}(\mathcal{A},\mathcal{A})=0$. The first paragraph has shown that $\mathbb{V}^{20}(x,\mathcal{A})$ is continuous at $x=\mathcal{A}$. Thus, $\lim_{t\rightarrow \infty}\mathbb{V}^{20}(\Psi_t(P),\mathcal{A})=0$. Therefore, there exists $T_2^0$ such that $\mathbb{V}^{20}(\Psi_{T_2^0}(P),\mathcal{A})< \frac{1}{10}(s_0\wedge s_1)$. This implies that there exists $T_3^0$ and $\tilde{\psi}_3\in {\bf C}_{T_3^0}$ with $\tilde{\psi}_3(0)=\Psi_{T_2^0}(P)$ and $\tilde{\psi}_3(T_3^0)=\mathcal{A}$ such that
\begin{equation}\label{R4}
\mathcal{I}_{\Psi_{T_2^0}(P)}^{T_3^0}(\tilde{\psi_3})< \frac{1}{10}(s_0\wedge s_1).
\end{equation}
Let $T_1=T_1^0+T_2^0+T_3^0$ and define $\tilde{\psi}\in {\bf C}_{T_1}$ by
\[
\tilde{\psi}(t)=\left\{
         \begin{array}{ll}
           \tilde{\psi}_1(t), & t\in [0, T_1^0], \\
           \Psi_{t-T_1^0}(P), & t\in [T_1^0, T_1^0+T_2^0],\\
           \tilde{\psi}_3(t-T_1^0-T_2^0), & t\in [T_1^0+T_2^0, T_1].
         \end{array}
       \right.
\]
Obviously, $\tilde{\psi}(0)=\mathcal{S}$ and $\tilde{\psi}(T_1)=\mathcal{A}$.  (\ref{R1}) follows immediately from (\ref{R3}) and (\ref{R4}), hence the claim is true.

Choose $E^*= \mathcal{S}$ and $\eta=\frac{1}{10}(s_0\wedge s_1)$ in the first paragraph. Then  we know that there exists $T^*>0$ such that for any $x\in \overline{B}_{\delta}(\mathcal{S})$, there exist $T^x\le T^*$ and $\tilde{\psi}^x\in {\bf C}_{T^x}$ with $\tilde{\psi}^x(0)=x,\ \tilde{\psi}^x(T^x)=\mathcal{S}$ such that
\begin{equation}\label{R5}
I^{T^x}_x(\tilde{\psi}^x)< \frac{1}{10}(s_0\wedge s_1).
\end{equation}
  Let $T=T^*+T_1$. Define $\psi^x\in {\bf C}_T$ by
\[
\psi^x:=\left\{
         \begin{array}{ll}
           \tilde{\psi}^x(t), & t\in [0, T^x], \\
           \tilde{\psi}(t-T^x), & t\in [T^x, T^x+T_1],\\
           \mathcal{A}, & t\in [T^x+T_1, T].
         \end{array}
       \right.
\]
Thus, by (\ref{R1}) and (\ref{R5}), for every $x\in\bar{B}_{\delta}(\mathcal{S})$,
\begin{equation}\label{attractup}
 \mathcal{I}_{x}^{T}(\psi^x)\leq \frac{3}{10}(s_0\wedge s_1)
\end{equation}
and
\begin{equation}\label{attraloc}
\psi^x(T)= \mathcal{A}.
\end{equation}
The remaining proof is entirely the same as that of \cite [Theorem 3.2]{XCJ} if the notation $\mathcal{I}_{x}^{T}$ here is replaced by $S_T^x$ there. Thus, we conclude that there exists  $\varepsilon_*>0$
such that for any $\vare\in(0,\varepsilon_*)$,
\[ \mu^{\vare}(B_{\delta}(\mathcal{S}))\leq \exp\{-\frac{2}{5}(s_0\wedge s_1)/ \vare\}  .   \]
This completes the proof.

\end{proof}

\begin{rmk}\label{rmk4.2}
By Proposition \ref{SDC}, it remains that the case

$${\rm (iv)}\ 1< R <\frac{L}{C}$$
 not have been considered yet. In this case, $\mathcal{S}$ is still a saddle and  we cannot give the global dynamical behavior of system (\ref{Unp1DVanDuffing}). Let $M$ be the linearized matrix of (\ref{Unp1DVanDuffing}) at both equilibria $\mathcal{A}$  and $\mathcal{B}$. Then the domain of (iv) in plane $(\frac{L}{C},R)$ is divided into two parts by ${\rm Trace}(M)=0$ , (iv)$_+$: ${\rm Trace}(M)>0$ and (iv)$_-$: ${\rm Trace}(M)<0$. By calculation,
 $$ {\rm det} M=2(LC)^{-1}(R-1)>0;\ {\rm Trace}(M)=-(LR)^{-1}\left(R^2+2\frac{L}{C}R-3\frac{L}{C}\right)=: =-(LR)^{-1}g\left(\frac{L}{C},R\right).$$
And $g=0$ is equivalent to $R=h\left(\frac{L}{C}\right):= -\frac{L}{C}+\sqrt{\left(\frac{L}{C}\right)^2+3\frac{L}{C}},$ which is the Hopf bifurcation curve and strictly increasing to $\frac{3}{2}$ as $\frac{L}{C}\rightarrow \infty$. Both $\mathcal{A}$  and $\mathcal{B}$ are locally asymptotically stable between $R=\frac{L}{C}$ and $R= h\left(\frac{L}{C}\right)$, and they are repelling between $R=1$ and $R= h\left(\frac{L}{C}\right)$, see the classification Figure \ref{fig7} in the parameter $(\frac{L}{C},R)$ plane for detail. By the Poincar\'{e} and Bendixson theorem, there exists at least a stable limit cycle on the subdomain (iv)$_+$. By the similar way as above, we can prove that  the limit measures stay away from  the saddle $\mathcal{S}$ and both repellers $\mathcal{A}$  and $\mathcal{B}$ on the subdomain (iv)$_+$, therefore, its support  must be contained in limit cycles. On the subdomain (iv)$_-$, the limiting measures stay away from the saddle $\mathcal{S}$. Thus, its support is contained in the union of limit cycles and stable equilibria $\mathcal{A}$  and $\mathcal{B}$. At this stage, we cannot give the exact limiting measure or its support due to lacking of the global dynamics for  (\ref{Unp1DVanDuffing}).

 We conjecture that system (\ref{Unp1DVanDuffing}) admits a unique stable limit cycle containing $\mathcal{S}$, $\mathcal{A}$  and $\mathcal{B}$ in its interior on the subdomain (iv)$_+$. If it does hold, then the limiting measure is the arc-length measure of the unique limit cycle. And we also conjecture that when $R$ passes through the Hopf bifurcating line from bottom to top, the system has exactly three limit cycles, two of which are unstable and surround $\mathcal{A}$  and $\mathcal{B}$, respectively; the biggest limit cycle of which is stable and surrounds three equilibria $\mathcal{S}$, $\mathcal{A}$  and $\mathcal{B}$. In this case, the support of limiting measure is the union of $\mathcal{A}$, $\mathcal{B}$ and the biggest limit cycle. We continue to conjecture that as $R$ continues to increase, there must be another bifurcating line, when $R$ passes through it from bottom to top, the phase plane is divided into two parts by the stable manifold of the saddle point $\mathcal{S}$, which are  attracting basins of $\mathcal{A}$  and $\mathcal{B}$, respectively. In this case, the limiting measure is the convex combination of Delta measures of $\mathcal{A}$ and $\mathcal{B}$.
\begin{figure}[h]
  \centering
  \includegraphics[width=0.6\textwidth]{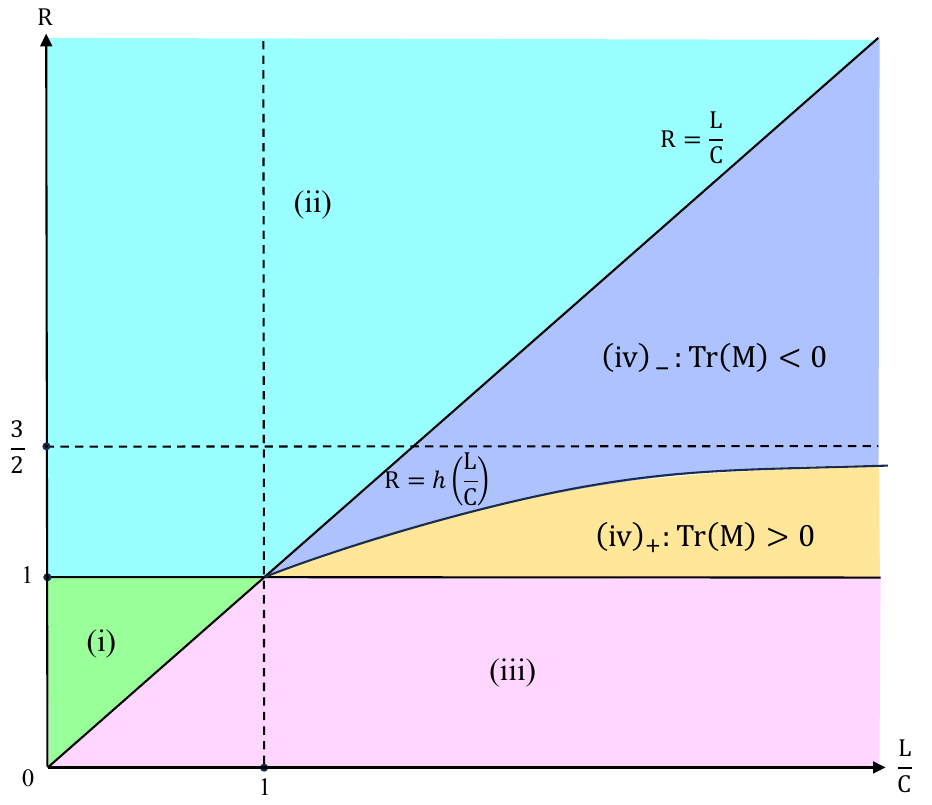}\\
  \caption{The classification of dynamical behavior of (\ref{Unp1DVanDuffing}) in $R, L, C$.}\label{fig7}
\end{figure}
\end{rmk}


\subsection {Random perturbation of May-Leonard model}
May and Leonard \cite{May} studied the following system in population biology
\begin{equation}\label{sys2}
    \begin{array}{l}
        \displaystyle \frac{d x_1}{dt}=x_1(1- x_1-\alpha x_{2}-\beta x_{3}),\\
        \noalign{\medskip}
        \displaystyle \frac{d x_2}{dt}=x_2(1-\beta x_1-x_{2}-\alpha x_{3}),\\
        \noalign{\medskip}
        \displaystyle \frac{d x_3}{dt}=x_3(1-\alpha x_1-\beta x_{2}-x_{3})
    \end{array}
\end{equation}
on the space $\mathbb{R}_+^3:=\{x:=(x_1,x_2,x_3)\in \mathbb{R}^3: x_i\ge 0, i=1,2,3\}$ and classified the long-term behavior of the system according to the sign of $\alpha+\beta-2$. We assume that the parameters satisfy $-1<\alpha+\beta<2$. The system (\ref{sys2}) has a positive equilibrium $E:=\frac{1}{1+\alpha+\beta}(1,1,1)=:(e_1,e_2,e_3)$.
Consider random perturbation of (\ref{sys2})
  \begin{equation}\label{SML}
    \begin{array}{l}
        \displaystyle d x_1=x_1\left(1- x_1-\alpha x_{2}-\beta x_{3}\right)dt +\sqrt{\varepsilon}x_1\sigma_1(x)dB_t^1,\\
        \noalign{\medskip}
        \displaystyle  \displaystyle d x_2=x_2\left(1-\beta x_1-x_{2}-\alpha x_{3}\right)dt +\sqrt{\varepsilon}x_2\sigma_2(x)dB_t^2,\\
        \noalign{\medskip}
        \displaystyle d x_3=x_3\left(1-\alpha x_1-\beta x_{2}-x_{3}\right)dt +\sqrt{\varepsilon}x_3\sigma_3(x)dB_t^3
    \end{array}
\end{equation}
where  $\sigma(x)={\rm Diag}(x_1\sigma_1(x),x_2\sigma_2(x),x_3\sigma_3(x))$ is defined on  $\mathbb{R}_+^3$ and $\{B(t)=(B_t^1,B_t^2,B_t^3)\}_{t \geq 0}$ is a $3$-dimensional Brownian motion.

\begin{prop}\label{MLeo}
Suppose that $\si(x)$ is a locally Lipschitz continuous function satisfying $0<\sum_{i=1}^3|\si_i(x)|^2 \leq c_1$ for all $x\in {\rm Int}\mathbb{R}_+^3$. If $-1<\alpha+\beta<2$, then the system {\rm(\ref{SML})} satisfies a uniform large deviation principle, and moreover  there exists $\varepsilon_0>0$  such that for any given $\varepsilon\in (0,\varepsilon_0]$, {\rm(\ref{SML})} admits a unique invariant measure $\mu^{\varepsilon}$ supported on ${\rm Int}\mathbb{R}_+^3$ and  $\mu^{\varepsilon}$  converges weakly to $\delta_E(\cdot)$ as $\varepsilon\rightarrow 0$.
\end{prop}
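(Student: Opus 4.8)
The plan is to prove the two assertions separately. For the ULDP I would apply Theorem~\ref{thmULDP} in the domain form of Remark~\ref{rmk2.2}, taking $D:=\mathrm{Int}\,\RR_+^3$ (invariant for \eqref{SML}, since each coordinate obeys a one-dimensional equation $dx_i=x_i(\cdots)\,dt+\sqrt\varepsilon\,x_i\sigma_i(x)\,dB^i$ and stays strictly positive). The Lyapunov function I would use is the relative-entropy-type function
\[
V(x):=\sum_{i=1}^{3}\Big(x_i-e_i-e_i\log\frac{x_i}{e_i}\Big)\ \ge 0,\qquad V(E)=0,
\]
which is proper on $D$ ($V\to+\infty$ as $x\to\partial D$ and as $|x|\to\infty$ within $D$). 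The algebraic input is that the circulant interaction matrix $A$ of \eqref{sys2} satisfies $AE=\mathbf 1$ and that $A+A^{*}$ has eigenvalues $2+2(\alpha+\beta)$ and $2-(\alpha+\beta)$ (the latter double), so $A+A^{*}>0$ exactly when $-1<\alpha+\beta<2$; I set $\lambda:=\tfrac12\min\{2+2(\alpha+\beta),\,2-(\alpha+\beta)\}>0$.

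First I would check Assumption~\ref{ass2}. From $\nabla V(x)=(1-e_i/x_i)_i$ and $AE=\mathbf 1$ one gets $\langle b(x),\nabla V(x)\rangle=-(x-E)^{*}\tfrac{A+A^{*}}{2}(x-E)\le-\lambda|x-E|^{2}$; since $\nabla^{2}V(x)=\mathrm{diag}(e_i/x_i^{2})$ and $\sigma(x)=\mathrm{diag}(x_i\sigma_i(x))$, one has $\mathrm{Trace}(\sigma^{*}\nabla^{2}V\sigma)=\sum_i e_i\sigma_i^{2}(x)\in[0,\,c_1\max_i e_i]$, which is bounded (making \eqref{23} trivial), and $|\sigma^{*}(x)\cdot\nabla V(x)|^{2}=\sum_i\sigma_i^{2}(x)(x_i-e_i)^{2}\le c_1|x-E|^{2}$. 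Hence for fixed $\theta,\eta>0$ the left side of \eqref{22} is at most $-\lambda|x-E|^{2}+c_1|x-E|^{2}/(\eta V(x))+\tfrac{\theta}{2}c_1\max_i e_i$, which is bounded above on $D$ because $|x-E|^{2}/V(x)$ is bounded near $E$ (Taylor expansion at the nondegenerate minimum), tends to $0$ at $\partial D$, and is dominated by the negative quadratic for large $|x|$. Since Assumption~\ref{ass1} holds ($b$ polynomial, $\sigma$ locally Lipschitz), Theorem~\ref{thmULDP} yields the ULDP.

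For the invariant measures, the same computation gives, for the generator $\mathcal L^{\varepsilon}$ of \eqref{SML},
\[
\mathcal L^{\varepsilon}V(x)=-(x-E)^{*}\tfrac{A+A^{*}}{2}(x-E)+\tfrac{\varepsilon}{2}\sum_i e_i\sigma_i^{2}(x)\ \le\ -\lambda|x-E|^{2}+\varepsilon C^{*},\qquad C^{*}:=\tfrac12 c_1\max_i e_i,
\]
with $C^{*}$ a uniform constant because $\sum_i\sigma_i^{2}\le c_1$. Since $|x-E|$ is bounded below near $\partial D$ while $V$ is bounded on small balls around $E$, there are $\varepsilon_0>0$, $k>0$ and a compact $K\subset D$ with $\mathcal L^{\varepsilon}V\le-k$ on $D\setminus K$ for $\varepsilon\le\varepsilon_0$; by Khasminskii's theory \eqref{SML} is non-explosive (also clear from the log-form of the coordinates) and has a stationary distribution $\mu^{\varepsilon}$ on $D$, unique by strong Feller plus irreducibility of the diffusion on $D$. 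A standard It\^o/localisation argument run from a fixed interior point, together with the ergodic theorem and the drift estimate above, should then give the uniform bound $\int_{D}|x-E|^{2}\,d\mu^{\varepsilon}\le 2\varepsilon C^{*}/\lambda$; hence $\mu^{\varepsilon}(\{|x-E|>\delta\})\le 2\varepsilon C^{*}/(\lambda\delta^{2})\to0$ for every $\delta>0$, so $\mu^{\varepsilon}\Rightarrow\delta_E$, and in particular $\{\mu^{\varepsilon}\}_{0<\varepsilon\le\varepsilon_0}$ is tight.

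The calculus identities and the Khasminskii recurrence step are routine. The part I expect to require care is the verification of \eqref{22}, i.e. that $V$ controls the degenerate term $|\sigma^{*}\nabla V|^{2}/(\eta V)$ uniformly on all of $D$ — handling the equilibrium $E$ and the absent boundary $\partial D$ at once — and the uniqueness claim, where the hypothesis only gives $\sum_i\sigma_i^{2}>0$ rather than $\sigma_i\neq0$ for each $i$, so one must either read the standing assumption as full nondegeneracy of $\sigma$ on $D$ or recover the strong Feller property from hypoellipticity (the May--Leonard drift couples the coordinates). I regard this nondegeneracy/uniqueness issue as the main obstacle; once it is settled, the convergence $\mu^{\varepsilon}\Rightarrow\delta_E$ follows from the quadratic moment bound alone, needing neither the Freidlin--Wentzell machinery of Section~3 nor the global phase portrait of the deterministic May--Leonard system.
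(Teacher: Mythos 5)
Your verification of the ULDP hypotheses is essentially the same as the paper's: identical relative-entropy Lyapunov function $V(x)=\sum_i\big(x_i-e_i-e_i\log(x_i/e_i)\big)$, identical computations of $\langle b,\nabla V\rangle$, of $\mathrm{Trace}(\sigma^*\nabla^2V\sigma)=\sum_i e_i\sigma_i^2(x)$, and of $|\sigma^*\nabla V|^2\le c_1|x-E|^2$, all assembled against Remark~\ref{rmk2.2} and Theorem~\ref{thmULDP}. Where you genuinely diverge is the final step $\mu^\varepsilon\Rightarrow\delta_E$: the paper cites \cite[Theorem 3.1]{Chen2020} to get tightness and to pin the limit on the Birkhoff center of \eqref{sys2} in $\mathrm{Int}\,\RR_+^3$, which is the singleton $\{E\}$ because $V$ is a strict Lyapunov function there; you instead extract the quantitative bound $\int|x-E|^2\,d\mu^\varepsilon\le \varepsilon C^*/\lambda$ directly from $\mathcal L^\varepsilon V\le -\lambda|x-E|^2+\varepsilon C^*$ and conclude by Chebyshev. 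Your route is more self-contained and gives a rate, at the cost of needing the Dynkin/localization plus ergodic-theorem justification of the stationary-measure identity, which you only sketch (one clean way: bound $\tfrac1t\EE\int_0^t(|X^\varepsilon(s)-E|^2\wedge M)\,ds$, let $t\to\infty$ using the invariant measure for the truncated integrand, then $M\to\infty$ by monotone convergence). Your flag about the degeneracy is a fair point worth raising: the hypothesis $0<\sum_i\sigma_i^2\le c_1$ does not force each $\sigma_i\neq 0$, so the diffusion matrix $\mathrm{Diag}(x_i^2\sigma_i^2)$ may have null directions, and both the paper's citation to Khasminskii/Huang for existence and uniqueness of $\mu^\varepsilon$ and your strong Feller/ergodic-theorem steps implicitly lean on nondegeneracy (or hypoellipticity from the coupled drift). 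This concern is shared with the paper's proof, not peculiar to yours, and under the natural reading that $\sigma$ is nondegenerate on $D$ both arguments close.
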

\begin{proof}
Let $b(x)$ be the drift vector field of the system (\ref{sys2}) and define the Lyapunov function
$$V(x)= \left(x_1-e_1-e_1\ln\frac{x_1}{e_1}\right)+\left(x_2-e_2-e_2\ln\frac{x_2}{e_2}\right)+\left(x_3-e_3-e_3\ln\frac{x_3}{e_3}\right).$$
Then
\begin{equation}\label{1}
\lim_{x\rightarrow \partial \mathbb{R}_+^3}V(x)=+\infty,
\end{equation}
$\nabla V(x)= \left(\frac{x_1-e_1}{x_1},\frac{x_2-e_2}{x_2},\frac{x_3-e_3}{x_3}\right)^*$, $\nabla^2V(x)={\rm Diag}\left(\frac{e_1}{x_1^2},\frac{e_2}{x_2^2},\frac{e_3}{x_3^2}\right)$ and
\begin{equation}\label{2}
\langle b(x), \nabla V(x)\rangle=-\sum_{i=1}^3(x_i-e_i)^2-(\alpha+\beta)\sum_{1\le i<j\le 3}(x_i-e_i)(x_j-e_j)\le -k|x-E|^2
\end{equation}
for some $k>0$ because the symmetric matrix
\begin{displaymath}
    \left(
        \begin{array}{cccc}
          & 1 & \frac{\alpha+\beta}{2} & \frac{\alpha+\beta}{2}\\
          & \frac{\alpha+\beta}{2} & 1 & \frac{\alpha+\beta}{2}  \\
          & \frac{\alpha+\beta}{2} & \frac{\alpha+\beta}{2} & 1 \\
        \end{array}
        \right )
\end{displaymath}
is positive definite following the assumption $-1<\alpha+\beta<2$. Note that the limit in (\ref{1}) is understood that there is at least $i$ such that either $x_i\rightarrow +\infty$ or $x_i\rightarrow 0$. (\ref{2}) implies that $E$ is a globally asymptotically stable equilibrium of (\ref{sys2}) on ${\rm Int}\mathbb{R}_+^3$. In addition,
\begin{equation}\label{3}
0<\text{Trace}\left(\sigma^*(x)\nabla^2V(x)\sigma(x)\right)=(1+\alpha+\beta)^{-1}\sum_{i=1}^3|\si_i(x)|^2\le c_1(1+\alpha+\beta)^{-1}
\end{equation}
and
$$|\sigma^*(x)\nabla V(x)|^2\le|x-E|^2\sum_{i=1}^3|\si_i(x)|^2\le c_1|x-E|^2.$$
Since $\lim_{x_i\rightarrow 0}\frac{x_i-e_i}{x_i-e_i-e_i\ln\frac{x_i}{e_i}}=0$ and $\lim_{x_i\rightarrow +\infty}\frac{x_i-e_i}{x_i-e_i-e_i\ln\frac{x_i}{e_i}}=1$, there exists a positive constant $c_2>0$ such that
\begin{equation}\label{4}
\frac{|x-E|^2}{V^2(x)}\le c_2\ {\rm for\ all}\ x\in {\rm Int}\mathbb{R}_+^3.
\end{equation}
\eqref{As b V} and \eqref{23} in the Assumption 2.2  follow from (\ref{1}) and (\ref{3}), respectively. Combining with (\ref{2}), (\ref{3}) and (\ref{4}) together, we see that \eqref{22} is satisfied. According to Remark \ref{rmk2.2} and Theorem 	\ref{thmULDP},  (\ref{SML}) admits ULDP.

Let
$$\mathcal{L}^{\varepsilon}V(x):=\langle b(x), \nabla V(x)\rangle+\frac{\varepsilon}{2}\text{Trace}\left(\sigma^*(x)\nabla^2V(x)\sigma(x)\right).$$
Then it follows from (\ref{2}) and (\ref{3}) that
\begin{equation}\label{5}
\mathcal{L}^{\varepsilon}V(x)\le -k|x-E|^2+\frac{\varepsilon c_1}{2(1+\alpha+\beta)}.
\end{equation}
Fix $\varepsilon\in (0,\frac{k}{c_1(1+\alpha+\beta)})$ and let $r(\varepsilon):=\sqrt{\frac{\varepsilon c_1}{2k(1+\alpha+\beta)}}$ and $B_r(E)$ denote the open ball with the center $E$ and the radius $r$. Then $B_{r(\varepsilon)}(E)\subset B_{(\sqrt{2}(1+\alpha+\beta))^{-1}}(E)\subset {\rm Int}\mathbb{R}_+^3$. Define $\gamma:=\frac{k}{4(1+\alpha+\beta)^2}$. Then it follows from (\ref{5}) that
\begin{equation}\label{6}
\mathcal{L}^{\varepsilon}V(x)\le -\gamma,\ x\in B^c_{(\sqrt{2}(1+\alpha+\beta))^{-1}}(E),\ \varepsilon\in \left(0,\frac{k}{2c_1(1+\alpha+\beta)}\right).
\end{equation}
By \cite[Theorem A]{Huang1} or \cite[Theorem 4.1]{KhasminskiiB}, (\ref{SML}) admits a unique invariant measure $\mu^{\varepsilon}$ for $0<\varepsilon<\frac{k}{2c_1(1+\alpha+\beta)}$. Applying \cite [Theorem 3.1]{Chen2020} to ${\rm Int}\mathbb{R}_+^3$, we obtain that $\{\mu^{\varepsilon}|0<\varepsilon<\frac{k}{2c_1(1+\alpha+\beta)}\}$ is tight and $\mu^{\varepsilon}$  converges weakly to $\delta_E(\cdot)$ as $\varepsilon\rightarrow 0$ because the equivalent class of (\ref{sys2}) on ${\rm Int}\mathbb{R}_+^3$ is the unique equilibrium $E$.
\end{proof}


\subsection {An unbounded example with infinite equivalent classes}
In this subsection, we will provide an example whose coefficients are unbounded and equivalent classes are infinite. Nevertheless,  we can determine its limiting measure by applying  Theorems \ref{thmULDP} and \ref{saddle}.

Let $H(x_1,x_2):= \frac{x_2^2}{2}+\frac{x_1^4}{4}-\frac{x_1^2}{2}\in [-\frac{1}{4}, +\infty)$. Consider the two dimensional deterministic system
\begin{equation}\label{DFigureEight}
\frac{dx}{dt}=\mathcal{H}(H)-F(H)\nabla H
\end{equation}
and its random perturbation
\begin{equation}\label{FigureEight}
dx=[\mathcal{H}(H)-F(H)\nabla H] dt+\sqrt{\varepsilon} \sigma(x) dB(t)
\end{equation}
where $\mathcal{H}(H):=(\frac{\partial H}{\partial x_2},-\frac{\partial H}{\partial x_1})^*$ and the $C^1$ function $F$ is taken to be one of the functions $F_i, i=1,...,4$ where
\[F_i(s)=\begin{cases}
(-1)^i|s|^3, \quad & s\in[-\frac{1}{4},0),\\
s^5\sin^2\frac{\pi}{s},\quad & s\in [0,1],\\
\rm positive\ smooth\ function, \quad & s\in (1,2),\\
1,\quad & s\in [2,+\infty),\\
\end{cases} \]
for $i=1,2$, or
\[F_j(s)=\begin{cases}
(-1)^j|s|^3, \quad & s\in[-\frac{1}{4},0),\\
G(s),\quad & s \in [0, +\infty),\\
\end{cases} \]
for $j=3,4$. Here $G:[0,+\infty)\rightarrow [0,1]$ is a $C^{\infty}$ function satisfying that $G(0)=0$ and there exist a sequence of intervals $I_n=[a_n,b_n]\subset (0,1]$ with $b_{n+1}<a_n$,  $n=1,2,3,\cdots$, $\lim_{n\rightarrow \infty}b_n=0$ and $b_1=1$ such that $G(s)=0$ for any $s\in I=\bigcup_{n=1}^{\infty}I_n$, $G(s)>0$ for any $s\in (0,+\infty)\backslash I$ and $G(s)\equiv 1$ for $s\gg 1$. Note that such a $G$ can be constructed using cut-off functions.

For these systems, we have the following result.
\begin{prop}
Suppose that $\sigma(x)=(\sigma_{i,j}(x_1,x_2))_{2\times 2}$ is a locally Lipschitz continuous function such that
\begin{equation}\label{UBC-1}
\beta^{*}\sigma(x)\sigma^{*}(x)\beta >0\ {\rm for\ all}\ x\in \mathbb{R}^2\ {\rm and}\ \beta \in \mathbb{R}^2\backslash \{0\}.
\end{equation}
and that there exists a positive constant $c_1$ such that for all $x\in \mathbb{R}^2$,
\begin{equation}\label{UBC}
\|\sigma(x)\|^2:=\sum_{i,j=1}^2\sigma_{i,j}^2(x_1,x_2)\le c_1(x_1^4+x_2^{\frac{4}{3}}+1).
\end{equation}
Then, for $i=1,3$,   $\mu^{\varepsilon}$  converges weakly to $\delta_O(\cdot)$ as $\varepsilon\rightarrow 0$, and for  $i=2,4$, $\mu^{\varepsilon}$  converges weakly to $\lambda_1\delta_{(1,0)}(\cdot)+\lambda_2\delta_{(-1,0)}(\cdot)$ with $\lambda_1+\lambda_2=1$ as $\varepsilon\rightarrow 0$.
\end{prop}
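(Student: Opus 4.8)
The plan is to verify the hypotheses of Theorems \ref{thmULDP} and \ref{saddle} for the family \eqref{FigureEight}, then to analyze the global phase portrait of \eqref{DFigureEight} and its equivalent classes so that Theorem \ref{saddle} (together with the Birkhoff-center restriction from \cite[Theorem 3.1]{Chen2020}) pins down the support of every limiting measure. First I would record the structure of \eqref{DFigureEight}: since $\frac{d}{dt}H(x(t)) = -F(H)|\nabla H|^2 \le 0$, the Hamiltonian $H$ is a Lyapunov function for the flow, and the only places where $\dot H = 0$ are the critical points of $H$ (the saddle $O=(0,0)$ with $H=0$ and the two centers $(\pm 1,0)$ with $H=-\tfrac14$) together with the level sets $\{H = s\}$ on which $F(s)=0$. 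For $i=1$ (resp. $i=3$) one has $F_1(s)<0$ (resp. $F_3(s)<0$) for $s\in[-\tfrac14,0)$, hence $\dot H>0$ there, so trajectories with $H<0$ are pushed toward $H=0$, i.e.\ toward the figure-eight separatrix loop through $O$; combined with $F_i\ge 0$ on $[0,\infty)$ vanishing only on the exceptional set, the $\omega$-limit set of every orbit lies in $\{H=0\}$, whose maximal equivalent class is the figure-eight $\Gamma_8 := \{H=0\}$ (an equivalent class because $\mathbb V$ is continuous on a limit set; here noise is nondegenerate by \eqref{UBC-1}, so Lemma \ref{Vcon} applies). For $i=2,4$ one has $F_i(s)>0$ on $[-\tfrac14,0)$, so $\dot H<0$ there and orbits inside the loops descend to the centers $(\pm1,0)$; the $\omega$-limit sets are then contained in $\{(1,0)\}\cup\{(-1,0)\}\cup\{H=0\}$, and $\{(1,0)\}$, $\{(-1,0)\}$ are attractors while $O$ and the loop $\Gamma_8$ are repelling/saddle-type objects. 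The exceptional level sets $\{H=s\}$ with $F(s)=0$ are periodic orbits of \eqref{DFigureEight} but they are not chain-recurrent for the perturbed metastability analysis because $F>0$ on either side: trajectories cross them transversally in the $H$-direction, so they generate no obstruction.

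Next I would check Assumptions \ref{ass1} and \ref{ass2}. Assumption \ref{ass1} holds since the coefficients are locally Lipschitz. For Assumption \ref{ass2} take the Lyapunov function $V(x) = H(x) + 1 \ge \tfrac34 > 0$; then $V\to\infty$ as $|x|\to\infty$ because $H(x_1,x_2)=\tfrac{x_2^2}{2}+\tfrac{x_1^4}{4}-\tfrac{x_1^2}{2}\to\infty$. One computes $\langle b(x),\nabla V\rangle = -F(H)|\nabla H|^2 \le 0$, and since $\langle\mathcal H(H),\nabla H\rangle\equiv 0$ the drift term contributes nothing; the Hessian term and the $|\sigma^*\nabla V|^2/(\eta V)$ term are controlled by the polynomial bound \eqref{UBC} on $\|\sigma\|^2$ together with $|\nabla H|^2 = x_1^2(x_1^2-1)^2 + x_2^2 \lesssim x_1^6 + x_2^2 \lesssim V^{3/2}$ and $\nabla^2 H = \mathrm{diag}(3x_1^2-1, 1)$. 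A short estimate of the same flavour as \eqref{4.2-J} shows $J(x)\le C(1+V(x))$ and $\mathrm{Trace}(\sigma^*\nabla^2 V\sigma)\ge -M - CV(x)$, after choosing $\theta,\eta$ suitably; moreover for $|x|$ large the negative term $-F(H)|\nabla H|^2$ (with $F\equiv 1$ there) dominates and gives $\mathcal L^\varepsilon V \le -\gamma$, i.e.\ condition $({\bf A}_3)$ and dissipativity. By Theorem \ref{thmULDP}, \eqref{FigureEight} satisfies a ULDP, and by Khasminskii's theory \eqref{FigureEight} has a unique invariant measure $\mu^\varepsilon$ for $\varepsilon$ small; tightness of $\{\mu^\varepsilon\}$ and the inclusion $\mathrm{supp}(\mu)\subset B(\Psi)$ follow from \cite[Theorem 3.1]{Chen2020}. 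The Birkhoff center here is $\Gamma_8 \cup \{(\pm1,0)\}$ for $i=2,4$ and, for $i=1,3$, is $\{O\}$ together with the family of periodic level sets $\{H=s : F(s)=0\}$ accumulating at $O$ (this is exactly the "infinitely many equivalent classes" phenomenon) — so a priori the limiting measure could sit on any of these; that is why we still need the repeller-exclusion argument.

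To conclude, I would apply Theorem \ref{saddle} (and Remark \ref{rmk3.1}). For $i=2,4$: $\mathcal S_1 := \Gamma_8$ is a compact equivalent class containing the saddle $O$, and the two stable equilibria $\mathcal A_\pm := \{(\pm1,0)\}$ are attractors; taking $x_1$ a point with $\alpha(x_1)=\Gamma_8$, $\omega(x_1)=\mathcal A_+$ (and symmetrically for $\mathcal A_-$), plus the fact that $O\in\Gamma_8$ has a homoclinic-type structure so $\mathbb V$ is continuous on $\Gamma_8$, Theorem \ref{saddle} gives $\mu(U)=0$ for a neighbourhood $U$ of $\Gamma_8$; hence any limiting $\mu$ is supported in $\{(1,0)\}\cup\{(-1,0)\}$, forcing $\mu = \lambda_1\delta_{(1,0)} + \lambda_2\delta_{(-1,0)}$ with $\lambda_1+\lambda_2=1$. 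For $i=1,3$: now $O$ is a repeller for the flow restricted to a neighbourhood (since $\dot H>0$ just below $H=0$ and $F>0$ just above, orbits leave any small ball around $O$), and each exceptional periodic orbit $\{H=s\}$ with $F(s)=0$ is likewise a repeller relative to the annular region between consecutive ones because $F$ changes from positive to positive across it while vanishing only on it — more precisely, for each such $s$ one shows $\mathbb V(\{H=s\}, \text{nearby level}) = 0$ in the outward direction yet the flow still carries orbits away, so Remark \ref{rmk3.1} (with $\mathcal S_1$ that periodic orbit or $\{O\}$ and $\mathcal S_2$ the outer attractor, which for the outermost region is eventually governed by $F\equiv1$) yields $\mu(\text{nbhd of }\{H=s\})=0$ and $\mu(\{O\}) = 0$ unless $O$ is itself the global attractor. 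Here is the point: for $i=1,3$, the sign $F_i<0$ on $[-\tfrac14,0)$ makes the sub-loop region flow \emph{toward} $H=0$, so in fact $O$ is \emph{not} a repeller in the usual sense — rather the figure-eight interior collapses onto $\Gamma_8$ and then, via $F>0$ outside, $\Gamma_8$ itself is not stable and everything flows outward; re-examining, the unique attractor of \eqref{DFigureEight} for $i=1,3$ is the origin $O$ only if the loop interior is attracted to $O$, which holds precisely because $F_i(H)<0$ for $H\in[-\tfrac14,0)$ drives $H\downarrow -\tfrac14$? — I must get this sign bookkeeping exactly right, and \textbf{this case analysis of which objects are attractors versus repellers for the four choices of $F$ is the main obstacle}: one has to combine $\dot H = -F(H)|\nabla H|^2$ with the geometry of the exceptional level sets, use LaSalle's invariance principle on each invariant region cut out by them, and then feed the resulting "repeller chain" into Theorem \ref{saddle}/Remark \ref{rmk3.1} to kill the mass on everything except the unique stable object, which for $i=1,3$ is $\{O\}$ (giving $\mu=\delta_O$) and for $i=2,4$ is $\{(\pm1,0)\}$ (giving the convex combination). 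Continuity of $\mathbb V$ at each limit set, needed to call it an equivalent class and to run the escape-time estimates, is supplied by Lemma \ref{Vcon} thanks to \eqref{UBC-1}.
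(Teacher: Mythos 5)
Your framework is correct (verify Assumptions~\ref{ass1}--\ref{ass2} with $V=H+\text{const}$, invoke Theorem~\ref{thmULDP} for ULDP, get tightness and the Birkhoff-center inclusion from \cite[Theorem 3.1]{Chen2020}, then kill mass on repellers and saddle-type classes via Theorem~\ref{saddle}/Remark~\ref{rmk3.1}), and this is indeed the paper's route. But you explicitly flag the sign bookkeeping for $i=1,3$ as unresolved, and your tentative reading of it is in fact backwards. Since $\dot H=-F(H)\,|\nabla H|^2$ and $F_i(s)=(-1)^i|s|^3<0$ for $s\in[-\tfrac14,0)$ when $i$ is odd, we have $\dot H>0$ on the punctured loop interiors, so $H$ \emph{increases} toward $0$: the centers $(\pm1,0)$ (where $H=-\tfrac14$) are \emph{repellers}, not attractors, and the loop interiors flow \emph{outward} to the figure-eight $H^{-1}(0)$, not inward to $O$ or to $(\pm1,0)$. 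Together with $\dot H\le0$ outside the loops, the global attractor for $i=1,3$ is $H^{-1}(0)$ (with the semistable cycles $H^{-1}(1/n)$, resp.\ annuli $H^{-1}([a_n,b_n])$, accumulating on it from outside). The conclusion $\mu=\delta_O$ does not come from $O$ being an attractor; it comes from $\mathrm{supp}(\mu)\subset H^{-1}(0)$ together with the fact that $H^{-1}(0)$ consists of two homoclinic orbits converging to the saddle $O$ in both time directions, so $\delta_O$ is the only $\Psi$-invariant probability measure carried by $H^{-1}(0)$. This last invariance step is a genuine ingredient you omitted.

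A second gap: you treat the exceptional level sets as ``no obstruction'' because trajectories cross transversally nearby, but they are compact invariant sets in the Birkhoff center and each must be excluded explicitly. The paper does this iteratively: for each fixed $n$, the set $H^{-1}([-\tfrac14,n^{-1}])$ (resp.\ $H^{-1}([-\tfrac14,b_n])$) is an attractor, and Theorem~\ref{saddle} is applied to the finite chain of semistable cycles $\{H^{-1}(m^{-1}):m<n\}$ (resp.\ annuli $\{H^{-1}([a_m,b_m]):m<n\}$) lying outside it, yielding $\mu$-null neighborhoods of each; then $n\to\infty$ gives $\mathrm{supp}(\mu)\subset H^{-1}(0)$. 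For $i=3,4$ one also needs that each annular region $H^{-1}([a_n,b_n])$, filled with periodic orbits, is a single equivalent class — the paper quotes \cite[Proposition 5.5]{XCJ} for this, whereas for isolated cycles the non-degeneracy \eqref{UBC-1} and Lemma~\ref{Vcon} suffice. Your write-up needs both the corrected sign analysis and this iterative exhaustion to close the argument.
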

\begin{proof}
First, we prove that the system (\ref{FigureEight}) admits ULDP. It suffices to check that $V(x_1,x_2):= H(x_1,x_2)+\frac{1}{4}$ satisfies \eqref{As b V}, \eqref{22}  and \eqref{23} in the Assumption 2.2. \eqref{As b V} is obvious. We have
$$
\nabla V(x_1,x_2)=(x_1^3-x_1,x_2)^*,\ \nabla^2V(x_1,x_2)={\rm Diag}(3x_1^2-1,1);
$$
\begin{equation}\label{e1}
\langle b(x), \nabla V(x)\rangle=-F(H)|\nabla H|^2\ {\rm and}\ \langle b(x), \nabla V(x)\rangle=-|\nabla H(x)|^2\ {\rm if}\ |x|\gg 1 ;
\end{equation}
\begin{equation}\label{Epr2}
\text{Trace}\left(\sigma^*(x)\nabla^2V(x)\sigma(x)\right)=(3x_1^2-1)(\sigma_{1,1}^2(x)+\sigma_{1,2}^2(x))+\sigma_{2,1}^2(x)+\sigma_{2,2}^2(x);
\end{equation}
$$
|\sigma^*(x)\nabla V(x)|^2\le |\nabla H(x)|^2\|\sigma(x)\|^2.
$$

\DEQS
&&\limsup_{|x|\rightarrow +\infty}\frac{\text{Trace}\left(\sigma^*(x)\nabla^2V(x)\sigma(x)\right)}{|\nabla H(x)|^2} \\
&=& \limsup_{|x|\rightarrow +\infty}\frac{\text{Trace}\left(\sigma^*(x)\nabla^2V(x)\sigma(x)\right)}{x_1^6+x_2^2}\\
&\leq&  \limsup_{|x|\rightarrow +\infty}\frac{3x_1^2+1}{(x_1^6+x_2^2)^{\frac{1}{3}}}\times \frac{\sigma_{1,1}^2(x)+\sigma_{1,2}^2(x)}{(x_1^6+x_2^2)^{\frac{2}{3}}}+\limsup_{|x|\rightarrow +\infty} \frac{\sigma_{2,1}^2(x)+\sigma_{2,2}^2(x)}{x_1^6+x_2^2}\\
&\leq&  3c_1\limsup_{|x|\rightarrow +\infty} \frac{x_1^4+x_2^{\frac{4}{3}}+1}{(x_1^6+x_2^2)^{\frac{2}{3}}}\\
&\leq&  6c_1.
\EEQS
In the second inequality, we have used the condition (\ref{UBC}). This implies that there exists a positive constant $d_1$ such that
\begin{equation}\label{e2}
\text{Trace}\left(\sigma^*(x)\nabla^2V(x)\sigma(x)\right)\le d_1(|\nabla H(x)|^2+1)\ {\rm for\ all}\ x\in \mathbb{R}^2.
\end{equation}
Besides, the inequality
$$\limsup_{|x|\rightarrow +\infty}\frac{|\sigma^*(x)\nabla V(x)|^2}{|\nabla H(x)|^2V(x)}\le \limsup_{|x|\rightarrow +\infty}\frac{\|\sigma(x)\|^2}{V(x)}\le c_1\limsup_{|x|\rightarrow +\infty}\frac{x_1^4+x_2^{\frac{4}{3}}+1}{V(x)}\le 4c_1$$
 implies that there is a positive constant $d_2$ such that
\begin{equation}\label{e3}
\frac{|\sigma^*(x)\nabla V(x)|^2}{V(x)}\le d_2(|\nabla H(x)|^2+1)\ {\rm for\ all}\ x\in \mathbb{R}^2.
\end{equation}
Take $\theta=\frac{1}{2d_1}$ and $\eta= 4d_2$. Then it follows from (\ref{e1}), (\ref{e2}) and (\ref{e3}) that the left-hand side of \eqref{22} is negative if $|x|\gg 1$, which means that \eqref{22} holds. Finally, by (\ref{Epr2}) and (\ref{UBC}), we have
\DEQS
\text{Trace}\left(\sigma^*(x)\nabla^2V(x)\sigma(x)\right)
&\ge & -\|\sigma^*(x)\|^2\\
&\ge&  -c_1(x_1^4+x_2^{\frac{4}{3}}+1)\\
&\ge&  -c_1(x_1^4+x_2^2+2)\\
&\ge&  -4c_1-8c_1V(x),
\EEQS
that is, \eqref{23} holds. Thus, the system (\ref{FigureEight}) admits ULDP according to Theorem \ref{thmULDP}.

From (\ref{e1}) and (\ref{e2}) we can see that when $0<\varepsilon\le d_1^{-1}$,
$$\mathcal{L}^{\varepsilon}V(x)=\langle b(x), \nabla V(x)\rangle+\frac{\varepsilon}{2}\text{Trace}\left(\sigma^*(x)\nabla^2V(x)\sigma(x)\right)\le -\frac{1}{3}|\nabla H(x)|^2\,\, {\rm if}\ |x|\gg 1.$$
Therefore, there is a positive constant $k>0$ such that
$$\mathcal{L}^{\varepsilon}V(x)\le -k(x_1^6+x_2^2)\ {\rm if}\ |x|\gg 1.$$
\cite[Theorem A]{Huang1} or \cite[Theorem 4.1]{KhasminskiiB} implies that (\ref{FigureEight}) admits a unique invariant measure $\mu^{\varepsilon}$ for $0<\varepsilon\le\frac{1}{d_1}$ and \cite [Theorem 3.1]{Chen2020} implies that $\{\mu^{\varepsilon}|0<\varepsilon\le\frac{1}{d_1}\}$ is tight.

For $i=1$, by the definition of solution, $H^{-1}\big(n^{-1}\big)$ is a limit cycle of (\ref{DFigureEight}) for $n=1,2,3,\cdots$, which is asymptotically orbitally stable from the exterior and asymptotically orbitally unstable from the interior.  These limit cycles accumulate on the homoclinic cycle, a figure-eight curve. Using $H$ as a Lyapunov  function, we can prove that  limit set of $\Psi_t(x)$  lies on the zero set of $F(H)|\nabla H|^2$ (see Figure \ref{eg58i1}).
\begin{figure}[h]
  \centering
  \includegraphics[width=0.5\textwidth]{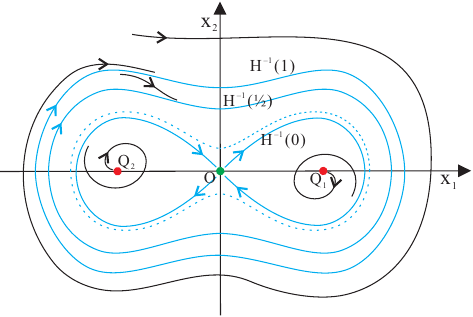}\\
  \caption{The phase portrait of system {\rm(\ref{DFigureEight})} for $i=1$.}\label{eg58i1}
\end{figure}
Thus $B(\Psi)=\bigcup_{n=1}^{+\infty}H^{-1}\big(n^{-1}\big)\cup H^{-1}(0)\cup\{(\pm 1,0)\}$. This shows that (\ref{DFigureEight}) possesses infinitely many equivalent classes because each $H^{-1}\big(n^{-1}\big)$ is a periodic orbit.

Let $\mu^{\varepsilon_k}$  converge weakly to $\mu$ as $\varepsilon_k\rightarrow 0$. Then $\mu$ is supported on $B(\Psi)$ by \cite [Theorem 3.1]{Chen2020}. Using $H+\frac{1}{4}$ as a Lyapunov function, we can verify that both $(1,0)$ and $(-1,0)$ are repellers. Theorem \ref{saddle} implies that $\mu$ is not supported on the equilibria $(\pm 1,0)$. Again using the Lyapunov method, we can prove that $H^{-1}\big([-4^{-1},n^{-1}]\big)$ is an attractor for each $n$. Utilizing Theorem \ref{saddle} to semistable limit cycles $\{H^{-1}\big(m^{-1}\big): \  m = 1, 2, 3, \cdots, n-1\}$ and the attractor $H^{-1}\big([-4^{-1},n^{-1}]\big)$, we conclude that $\mu$ is not supported on semistable limit cycles $\{H^{-1}\big(m^{-1}\big): \  m = 1, 2, 3, \cdots, n-1\}$. Since $n$ is arbitrary, we obtain that ${\rm supp}(\mu)\subset H^{-1}(0)$. Since $H^{-1}(0)$ consists of two homoclinic orbits connecting the origin $O$, it follows from the invariance of $\mu$ with respect to $\Psi$ that $\mu$ must be $\delta_O(\cdot)$. This proves that $\mu^{\varepsilon}$  converges weakly to $\delta_O(\cdot)$ as $\varepsilon\rightarrow 0$.

For $i=3$, the annular region $H^{-1}\big([a_n,b_n]\big)$ is full of nontrivial periodic orbits of {\rm(\ref{DFigureEight})} for $n=1,2,3,\cdots$, which is asymptotically orbitally stable from the exterior and asymptotically orbitally unstable from the interior.  These annular regions accumulate on the homoclinic cycle. Similarly, $B(\Psi)=\bigcup_{n=1}^{+\infty}H^{-1}\big([a_n,b_n]\big)\bigcup H^{-1}(0)\bigcup\{(\pm 1,0)\}$.

Let $\mu^{\varepsilon_k}$  converge weakly to $\mu$ as $\varepsilon_k\rightarrow 0$. Then $\mu$ is supported on $B(\Psi)$ by \cite [Theorem 3.1]{Chen2020}. In the same manner we can prove that there is no concentration of $\mu$ on the equilibria $(\pm 1,0)$ and that $H^{-1}\big([-4^{-1},b_n]\big)$ is an attractor for each $n$. By \cite [Proposition 5.5]{XCJ},   the annular region $H^{-1}\big([a_n,b_n]\big)$ is an equivalent class for each $n$. Applying Theorem \ref{saddle} to the annular regions
$\{H^{-1}\big([a_m,b_m]\big): \  m = 1, 2, 3, \cdots, n-1\}$ and the attractor $H^{-1}\big([-4^{-1},b_n]\big)$, we conclude that there is no concentration of $\mu$ on the annular regions $\{H^{-1}\big([a_m,b_m]\big): \  m = 1, 2, 3, \cdots, n-1\}$. Since $n$ is arbitrary, we obtain that ${\rm supp}(\mu)\subset H^{-1}(0)$ and $\mu^{\varepsilon}$  converges weakly to $\delta_O(\cdot)$ as $\varepsilon\rightarrow 0$.

Suppose that $i=2,4$. Then, using the same procedure we can show that there is no concentration of $\mu$ on  both $H^{-1}\big(n^{-1}\big)$ and $H^{-1}\big([a_n,b_n]\big)$ for each $n$. Again using $H+\frac{1}{4}$ as a Lyapunov function, we can verify that $(\pm 1,0)$ are attractors for both cases. Since every trajectory  of $\Psi$ in the interior of $H^{-1}(0)$ other than $(\pm 1,0)$ converges to $H^{-1}(0)$ as $t\rightarrow -\infty$ and to $(\pm 1,0)$ as $t\rightarrow +\infty$ (see Figure \ref{eg58i2}).
\begin{figure}[h]
  \centering
  \includegraphics[width=0.5\textwidth]{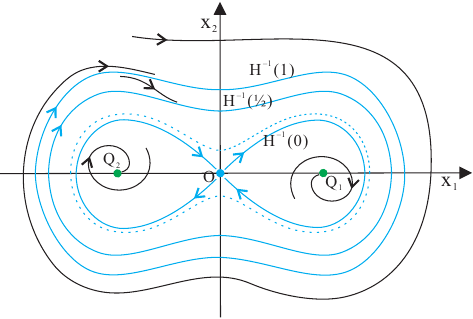}\\
  \caption{The phase portraits of system {\rm(\ref{DFigureEight})} for $i=2$.}\label{eg58i2}
\end{figure} Applying Theorem \ref{saddle} to $H^{-1}(0)$ and $(\pm 1,0)$, we get that $\mu\big(H^{-1}(0)\big)=0$. Finally, we must have ${\rm supp}(\mu)=\{(\pm 1,0)\}$, which proves that $\mu^{\varepsilon}$  converges weakly to $\lambda_1\delta_{(1,0)}(\cdot)+\lambda_2\delta_{(-1,0)}(\cdot)$ with $\lambda_1+\lambda_2=1$ as $\varepsilon\rightarrow 0$.
\end{proof}

\newpage

{\bf Acknowledgements} \\
 This work is partially supported by National Key R\&D
Program of China(No.2022YFA1006001), and by  NSFC (No. 12171321, 12371151, 12131019,
11721101), School Start-up Fund (USTC) KY0010000036, the Fundamental Research Funds for the Central Universities(USTC) (No. WK3470000031).

\end{document}